\newtheorem{theorem}{Теорема}[section]
\newtheorem{lemma}[theorem]{Лема}
\newtheorem{corollary}[theorem]{Наслідок}
\theoremstyle{definition}
\begin{document}

\selectlanguage{ukrainian} \thispagestyle{empty}
 \pagestyle{myheadings}              %%%%%%%%%%%%%%%%%%%%% <--------------
%%%%%%%%%%%%%%%%%%%%%%%%%%%%%%%%%%%%%%%%%%%%%%%%%%%%%%%%%%%%%%%%%%%%%%%%%%%%%
%%%%%%%%%%%%%%%%%%%%%%%%%%%%%%%%%%%%%%%%%%%%%%%%%%%%%%%%%%%%%%%%%%%%%%%%%%%%%
%%%%%%%%%%%%%%%%%%%%%%%%%%%%%%%%%%%%%%%%%%%%%%%%%%%%%%%%%%%%%%%%%%%%%%%%%%%%%

\pagestyle{myheadings}              %%%%%%%%%%%%%%%%%%%%% <--------------
%%%%%%%%%%%%%%%%%%%%%%%%%%%%%%%%%%%%%%%%%%%%%%%%%%%%%%%%%%%%%%%%%%%%%%%%%%%%%
%%%%%%%%%%%%%%%%%%%%%%%%%%%%%%%%%%%%%%%%%%%%%%%%%%%%%%%%%%%%%%%%%%%%%%%%%%%%%
%%%%%%%%%%%%%%%%%%%%%%%%%%%%%%%%%%%%%%%%%%%%%%%%%%%%%%%%%%%%%%%%%%%%%%%%%%%%%

%{\small УДК 517.5} \vskip 3mm
УДК 517.5 \vskip 3mm

\noindent \bf А.С. Сердюк  \rm (Інститут математики НАН України, Київ) \\
\noindent \bf Т.А. Степанюк  \rm (Інститут математики НАН України, Київ)

\noindent {\bf A.S. Serdyuk} (Institute of Mathematics NAS of
Ukraine, Kyiv) \\
 \noindent {\bf T.A. Stepaniuk} (Institute of Mathematics NAS of
Ukraine, Kyiv)\\

 \vskip 5mm

{\bf Рiвномiрнi наближення сумами Фур’є на множинах згорток перiодичних функцiй високої гладкостi}

\vskip 5mm

{\bf Uniform approximations  
by  Fourier sums on the sets  of  convolutions of periodic functions of high smoothness}

\vskip 5mm

 \rm На множинах $2\pi$--періодичних функцій $f$, котрі задаються $(\psi, \beta)$--інтегралами від функцій $\varphi$ із $L_{1}$ встановлено нерівності типу Лебега, в яких рівномірні норми відхилень сум Фур'є виражаються через найкращі  наближення в середньому тригонометричними поліномами функцій  $\varphi$. Доведено асимптотичну непокращуваність  одержаних оцінок за умови, коли послідовності $\psi(k)$ спадають до нуля швидше за довільну степеневу функцію.
 
 В ряді важливих випадків встановлено асимптотичні рівності для точних верхніх меж рівномірних наближень сумами Фур'є на класах $(\psi, \beta)$--інтегралів  від функцій $\varphi$, що належать одиничній кулі з простору $L_{1}$.
 
%В метриках пространств $L_{s}, \ 1\leq s\leq\infty$, найдены асимптотические равенства для  верхних граней приближений суммами Фурье  на классах
 % обобщенных интегралов Пуассона периодических функций, принадлежащих единичному %шару  пространства $L_{1}$.

\vskip 5mm

 \rm On the sets of $2\pi$--periodic functions $f$, which are defined with a help of $(\psi, \beta)$--integrals of the functions $\varphi$ from $L_{1}$, we establish Lebesgue-type inequalities, in which the uniform norms of deviations of Fourier sums are expressed via the best approximations by trigonometric polynomials of the functions  $\varphi$. We prove that obtained estimates are best possible, in the case when the sequences $\psi(k)$ decrease to zero faster than any power function. 
 
 In some important cases we establish the asymptotic equalities for the exact upper boundaries of uniform approximations by Fourier sums on the classes of  $(\psi, \beta)$--integrals of the functions  $\varphi$, which belong to the unit ball of the space  $L_{1}$.
 
\newpage

%%%%%%%%%%%%%%%%%%%%%%%%%%%%%%
%%%%%%%%%%%%%%%%%%%%%%%%%%%%%%%%%%

\section{Вступ}

Нехай $L_{1}$
 --- простір $2\pi$--періодичних сумовних на
  $[0,2\pi)$ функцій $f$ в якому норма задається формулою
$\|f\|_{1}=\int\limits_{0}^{2\pi}|f(t)|dt$; $L_{\infty}$ --- простір вимірних і суттєво обмежених     $2\pi$--періодичних функцій $f$ з нормою
$\|f\|_{\infty}=\mathop{\rm{ess}\sup}\limits_{t}|f(t)|$; $C$ --- простір неперервних  $2\pi$--періодичних функцій $f$, в якому норма означається рівністю
 ${\|f\|_{C}=\max\limits_{t}|f(t)|}$.

%Assume that $f$ is a $2\pi$--periodic function, summable on $[0,2\pi)$ with the Fourier series
%\begin{equation}\label{FourierSer}
%S[f]=\frac{a_{0}}{2}+\sum\limits_{k=1}^{\infty}\left( a_{k}\cos kx +b_{k}\sin kx \right)
%=\sum\limits_{k=0}^{\infty}A_{k}(f,x)
 %\end{equation}
 
Нехай $\psi(k)$ --- довільна фіксована послідовність дійсних невід'ємних чисел, і  нехай  $\beta$ --- фіксоване дійсне число. 
Позначимо через $C^{\psi}_{\beta}L_{1}$ множину $2\pi$--періодичних функцій, які при всіх $x\in\mathbb{R}$ зображуються у вигляді згортки
\begin{equation}\label{conv}
f(x)=\frac{a_{0}}{2}+\frac{1}{\pi}\int\limits_{-\pi}^{\pi} \Psi_{\beta}(x-t)\varphi(t)dt,
\ a_{0}\in\mathbb{R}, \ \varphi\in L_{1}, \  \varphi\perp1 \
\end{equation}
з твірним ядром $\Psi_{\beta}$ вигляду
\begin{equation}\label{kernelPsi}
\Psi_{\beta}(t)=\sum\limits_{k=1}^{\infty}\psi(k)\cos
\big(kt-\frac{\beta\pi}{2}\big), \ \psi(k)\geq 0, \  \beta\in
    \mathbb{R},
\end{equation}
таким, що
 \begin{equation}\label{condition}
\sum\limits_{k=1}^{\infty}\psi(k)<\infty.
\end{equation}

Якщо функції $f$ і $\varphi$ пов'язані рівністю (\ref{conv}), то функцію $f$ в цьому співвідношенні називають $(\psi,\beta)$--похідною функції $f$ і позначають через $f^{\psi}_{\beta}$. З іншого боку функцію $f$ у рівності  (\ref{conv}) називають $(\psi, \beta)$--інтегралом функції $\varphi$ і позначають через $\mathcal{J}^{\psi}_{\beta}\varphi$. Поняття $(\psi, \beta)$--похідної  ($(\psi, \beta)$--інтеграла) введені О.І. Степанцем  \cite{Stepanets1986_1}, \cite{Step monog 1987}, \cite{Stepanets1}.

Підмножину функцій $f$ з $C^{\psi}_{\beta}L_{1}$ таких, що $f^{\psi}_{\beta}\in B_{1}$,
де $B_{1}$ --- одинична куля в просторі $L_{1}$, тобто
\begin{equation*}
B_{1}:=\left\{\varphi: \ ||\varphi||_{1}\leq 1\right\},
\end{equation*}
будемо позначати через c $C^{\psi}_{\beta,1}$. Зрозуміло, що умова (\ref{condition}) гарантує неперервність твірного ядра $\Psi_{\beta}(t)$ вигляду (\ref{kernelPsi}), а отже і істинність вкладення 
 $C^{\psi}_{\beta}L_{1}\subset C \ \ (C^{\psi}_{\beta,1}\subset C)$.

У випадку, коли $\psi(k)=e^{-\alpha k^{r}}$, $\alpha>0$, $r>0$, ядра $\Psi_{\beta}(t)$ вигляду (\ref{kernelPsi}) є узагальненими ядрами Пуассона, тобто $\Psi_{\beta}(t)=P_{\alpha,r,\beta}(t)$, де
\begin{equation}\label{kernelPsi_GeneralizedPoisson}
P_{\alpha, r, \beta}(t)=\sum\limits_{k=1}^{\infty}\psi(k)\cos
\big(kt-\frac{\beta\pi}{2}\big), \ \alpha> 0, \ r>0, \ \beta\in\mathbb{R}.
\end{equation}
При цьому множини $C^{\psi}_{\beta}L_{1}$ та $C^{\psi}_{\beta,1}$ позначатимемо відповідно через  $C^{\alpha,r}_{\beta}L_{1}$ та  $C^{\alpha,r}_{\beta,1}$ і
називатемо множинами узагальнених інтегралів Пуассона, а відповідні $(\psi,\beta)$--похідні $f^{\psi}_{\beta}$  та $(\psi,\beta)$--інтеграли  $\mathcal{J}^{\psi}_{\beta}\varphi$ позначатимемо через $f^{\alpha,r}_{\beta}$  та  $\mathcal{J}^{\alpha,r}_{\beta}\varphi$ відповідно.

Простір усіх тригонометричних поліномів $t_{n-1}$ порядку не вищого за $n-1$ будемо позначати через  $\mathcal{T}_{2n-1}$.
Нехай $E_{n}(f)_{L_{1}}$ --- найкращі наближення в середньому  тригонометричними поліномами $t_{n-1}\in \mathcal{T}_{2n-1}$, тобто
\begin{equation*}
E_{n}(f)_{L_{1}}=\inf\limits_{t_{n-1}\in \mathcal{T}_{2n-1}}\|f-t_{n-1}\|_{1}.
\end{equation*}

Позначимо через $\rho_{n}(f;x)$ відхилення від  функції $f$ з $L_1$ її частинної суми Фур'є $S_{n-1}(f;\cdot)$ порядку  $n-1$
 \begin{equation}\label{rhoF}
\rho_{n}(f;x):=f(x)-S_{n-1}(f;x).
\end{equation}

 Норми    $\|\rho_{n}(f;\cdot)\|_{C}$ можна оцінити зверху через найкращі рівномірні наближення $E_{n}(f)_{C}=\inf\limits_{t_{n-1}\in \mathcal{T}_{2n-1}}\|f-t_{n-1}\|_{C}$  за допомогою нерівності Лебега
\begin{equation}\label{LebeqIneq}
\| \rho_{n}(f; \cdot) \|_{C} \leq (1+ L_{n-1})E_{n}(f)_{C}, \ n\in\mathbb{N}, \ f\in C,
\end{equation}
де величини $L_{n-1}$  --- константи Лебега сум Фур'є 
\begin{equation*}
L_{n-1}=\frac{1}{\pi}\int\limits_{-\pi}^{\pi}|D_{n-1}(t)|dt=
\frac{2}{\pi}\int\limits_{0}^{\frac{\pi}{2}}  \frac{|\sin (2n-1)t|}{\sin t} dt,
\end{equation*}
\begin{equation*}
D_{n-1}(t):=
\frac{1}{2}+\sum\limits_{k=1}^{\infty}\cos kt=\frac{\sin(n-\frac{1}{2})t}{2\sin \frac{t}{2}}.
\end{equation*}

При цьому, як встановив Фейєр \cite{Fejer},  для констант Лебега  
$L_{n}$ має місце асимптотична рівність
 \begin{equation}\label{Fejer_Ln}
L_{n}=\frac{4}{\pi^{2}} \ln n+ \mathcal{O}(1), \ \ n\rightarrow\infty,
\end{equation} 
де $\mathcal{O}(1)$ --- рівномірно обмежена по $n$ величина.

Більш точні оцінки для різниць  $L_{n}-\frac{4}{\pi^{2}} \ln (n+a)$, $a>0$, при $n\in\mathbb{N}$ можна знайти в роботах \cite{Akhiezer}, \cite{Dzyadyk}, \cite{Galkin}, \cite{Natanson}, \cite{Shakirov}, \cite{ZhukNatanson} та ін.

З урахуванням \eqref{Fejer_Ln}, нерівність \eqref{LebeqIneq} можна записати у вигляді 
\begin{equation}\label{LebeqIneq0}
\| \rho_{n}(f; \cdot) \|_{C} \leq \left(\frac{4}{\pi^{2} }\ln n + \mathcal{O}(1)\right)E_{n}(f)_{C},  \ \ f\in C.
\end{equation}

Незважаючи на загальність, нерівність (\ref{LebeqIneq0}) на всьому просторі $C$ є точною за порядком. Більш того, вона є асимптотично непокращуваною в тому сенсі, що константа $\frac{4}{\pi^{2}}$ у формулі \eqref{LebeqIneq0} зменшена бути не може.

Разом з тим, використання нерівностей \eqref{LebeqIneq} і \eqref{LebeqIneq0} для функцій $f$ із функціональних множин $C^{\psi}_{\beta}L_{1}$ чи $C^{\psi}_{\beta, 1}$ може виявитись неефективним.
Більш того, існують послідовності $\psi$ такі, що для  $f\in C^{\psi}_{\beta}L_{1}$ зазначені нерівності  є неточними навіть за порядком.
Щоб у цьому переконатись, покладемо $\psi(k)=e^{-\alpha k}$ і розглянемо породжені такими послідовностями класи $C^{\psi}_{\beta,  1}=C^{\alpha,1}_{\beta, 1}$. Як показано в \cite{Serdyuk2005} при всіх $\alpha>0$, $\beta\in\mathbb{R}$ справедлива асимптотична при $n\rightarrow\infty$ рівність
\begin{equation}\label{Serdyuk_Asymp_r=1}
 {\cal E}_{n}(C^{\alpha,1}_{\beta,1})_{C}=\sup\limits_{f\in
C^{\alpha,1}_{\beta,1}}\| \rho_{n}(f;\cdot)\|_{C}
=e^{-\alpha n}\left(\frac{1}{\pi(1-e^{-\alpha}) }+ \frac{\mathcal{O}(1)}{n}\frac{e^{-\alpha}}{ (1-e^{-\alpha})^{2} } \right),
  \end{equation}
в якій $\mathcal{O}(1)$ --- рівномірно обмежена по $\alpha$, $\beta$, $n$ величина.

Крім того, (див., наприклад, \cite[c. 48]{Stepanets2}) для найкращих наближень $ {E}_{n}(C^{\alpha,1}_{\beta,1})_{C}$ справедливі точні за порядком оцінки
\begin{equation}\label{Stepanets_BestApprox_r=1}
 K^{(1)} e^{-\alpha n}\leq {E}_{n}(C^{\alpha,1}_{\beta,1})_{C} \leq K^{(2)} e^{-\alpha n},
  \end{equation}
в яких $ K^{(1)}$ i $ K^{(2)}$ --- деякі додатні сталі.

Тоді для $f\in C^{\alpha,1}_{\beta,1} $ в силу \eqref{Serdyuk_Asymp_r=1} виконується нерівність
\begin{equation}\label{Rho_Ineq_r=1}
 \| \rho_{n}(f;\cdot)\|_{C}
\leq e^{-\alpha n}\left(\frac{1}{\pi(1-e^{-\alpha}) }+ \frac{\mathcal{O}(1)}{n}\frac{e^{-\alpha}}{ n(1-e^{-\alpha})^{2} } \right),
  \end{equation}
  в той час як використання класичної нерівності Лебега  та оцінки \eqref{LebeqIneq0} дозволяє записати більш грубу за порядком оцінку
  \begin{equation*}
\| \rho_{n}(f; \cdot) \|_{C} \leq e^{-\alpha n} \left(\frac{4K^{(2)}}{\pi^{2} }\ln n + \mathcal{O}(1)\right).
\end{equation*}

У роботі  \cite{Stepanets1989N4} О.І. Степанець, розглядаючи послідовності $\psi(k)$, що спадають до нуля повільніше за  будь-яку геометричну прогресію, встановив аналоги нерівностей Лебега для множин $(\psi,\beta)$--диференційовних функцій $C^{\psi}_{\beta}C \subset C^{\psi}_{\beta}L_{1}$, ($C^{\psi}_{\beta}C$ --- множина $2\pi$--періодичних функцій $f(x)$, які при всіх $x\in\mathbb{R}$ зображуються у вигляді \eqref{conv}, де $\varphi \in C$) в яких норми відхилень $\| \rho_{n}(f;\cdot)\|_{C}$ виражаються через найкращі наближення $E_{n}(f^{\psi}_{\beta})_{C}$. Одержані в \cite{Stepanets1989N4}  нерівності виявились асимптотично точними не тільки на всіх множинах $C^{\psi}_{\beta}C$, але і на деяких важливих підмножинах із $C^{\psi}_{\beta}C$, зокрема  на класах $C^{\psi}_{\beta}C^{0}= \left\{ f\in C^{\psi}_{\beta}C: \|f^{\psi}_{\beta}\|_{C}\leq 1 \right\}$. Згодом дослідження по встановленню асимптотично точних нерівностей типу Лебега на множинах $(\psi,\beta)$--диференційовних функцій були продовжені в роботах 
\cite{StepanetsSerdyuk2000}, \cite{MusienkoSerdyuk2013_4}, 
\cite{MusienkoSerdyuk2013_5},  \cite{SerdyukMusienko2010},
\cite{StepanetsSerdyuk2000No3}, 
\cite{SerdyukStepanyukFilomat}, \cite{SerdyukStepanyukJAEN}. 

В даній роботі буде знайдено нерівності типу Лебега на множинах $C^{\psi}_{\beta}L_{1}$, в яких норми  $\|\rho_{n}(f;x)\|_{C}$ виражаються через   $E_{n}(f^{\psi}_{\beta})_{L_{1}}$ і доведено їх асимптотичну непокращуваність у випадку, коли
 \begin{equation}\label{LimitCase_BestPossibility}
 \lim\limits_{n\rightarrow\infty}\frac{  \frac{1}{n}\sum\limits_{k=1}^{\infty}k\psi(k+n)}{\sum\limits_{k=n}^{\infty}\psi(k)}=0.
  \end{equation}

Також у роботі, за умови \eqref{LimitCase_BestPossibility} знайдено розв'язок задачі Колмогорова--Нікольського для сум Фур'є на класах $C^{\psi}_{\beta,1}$, яка полягає у відшуканні асимптотичних рівностей величин
 \begin{equation}\label{FourierSum}
 {\cal E}_{n}(C^{\psi}_{\beta,1})_{C}=\sup\limits_{f\in
C^{\psi}_{\beta,1}}\|f(\cdot)-S_{n-1}(f;\cdot)\|_{C}.
  \end{equation}
  
  Проблеми, пов'язані зі знаходженням розв'язку задачі Колмогорова--Нікольського для сум Фур'є на класах згорток досліджувались в роботах \cite{Kol}, \cite{Nikolsky 1946}, \cite{Stechkin 1980},
\cite{Teljakovsky1968}, \cite{Teljakovsky1989}, \cite{Stepanets1986_1}, \cite{Stepanets1}, \cite{Serdyuk2005}, \cite{Serdyuk2005Lp}, \cite{SerdyukStepanyuk2018}, \cite{Telyakovskiy1961}, \cite{SerdyukSokolenkoMFAT2019}, \cite{SerdyukSokolenkoUMJ2022} та ін.

\section{Основні результати}

Має місце наступне твердження.

\begin{theorem}\label{theorem2}
Нехай $\sum\limits_{k=1}^{\infty}k\psi(k)<\infty$, $\psi(k)\geq 0$, $k=1,2,...$, $\beta\in\mathbb{R}$ i $n\in\mathbb{N}$. Тоді для довільної функції
 $f\in C^{\psi}_{\beta}L_{1}$   має місце  нерівність
\begin{equation}\label{Theorem2Ineq1}
\|f(\cdot)-S_{n-1}(f;\cdot)\|_{C}\leq
 \frac{1}{\pi}
\sum\limits_{k=n}^{\infty}\psi(k)
 E_{n}(f^{\psi}_{\beta})_{L_{1}}.
 \end{equation}

 Крім того, для довільної функції $f\in  C^{\psi}_{\beta}L_{1}$ можна знайти функцію  ${\mathcal{F}(x)=\mathcal{F}(f;n,x)}$ з множини $ C^{\psi}_{\beta}L_{1}$ таку, що
  $E_{n}(\mathcal{F}^{\psi}_{\beta})_{L_{1}}=E_{n}(f^{\psi}_{\beta})_{L_{1}}$ 
і  має місце  рівність
 \begin{equation}\label{Theorem2Eq}
\|\mathcal{F}(\cdot)-S_{n-1}(\mathcal{F};\cdot)\|_{C}= \left( \frac{1}{\pi}
\sum\limits_{k=n}^{\infty}\psi(k)+
 \frac{\xi}{n} \sum\limits_{k=1}^{\infty}k\psi(k+n)\right)E_{n}(f^{\psi}_{\beta})_{L_{1}}.
 \end{equation}  
В (\ref{Theorem2Eq}) величина  $\xi=\xi(f;n;\psi;\beta)$ є такою, що $-2\leq \xi\leq 0$.
\end{theorem}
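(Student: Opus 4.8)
\emph{Proof proposal.} I would split the argument into the universal upper bound \eqref{Theorem2Ineq1} and the construction of the near‑extremal function $\mathcal{F}$.

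\emph{The inequality \eqref{Theorem2Ineq1}.} The starting point is the representation
\[
\rho_{n}(f;x)=\frac{1}{\pi}\int_{-\pi}^{\pi}\Psi_{\beta,n}(x-t)\,f^{\psi}_{\beta}(t)\,dt,
\qquad
\Psi_{\beta,n}(t):=\sum_{k=n}^{\infty}\psi(k)\cos\!\Big(kt-\frac{\beta\pi}{2}\Big),
\]
which follows by writing the Fourier series of $f$ through that of $f^{\psi}_{\beta}$ (all the series involved converge uniformly because $\sum_{k}\psi(k)<\infty$) and noting that $S_{n-1}$ removes precisely the harmonics of order $<n$. Since $\Psi_{\beta,n}$ contains only harmonics of order $\ge n$, it is orthogonal to every $t_{n-1}\in\mathcal{T}_{2n-1}$, so that $\rho_{n}(f;x)=\frac{1}{\pi}\int_{-\pi}^{\pi}\Psi_{\beta,n}(x-t)\bigl(f^{\psi}_{\beta}(t)-t_{n-1}(t)\bigr)\,dt$ for all such $t_{n-1}$. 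Taking absolute values, using $\|\Psi_{\beta,n}\|_{C}\le\sum_{k=n}^{\infty}\psi(k)$, and passing to the infimum over $t_{n-1}$ gives \eqref{Theorem2Ineq1}.

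\emph{The extremal function.} Put $E:=E_{n}(f^{\psi}_{\beta})_{L_{1}}$. If $E=0$ then $f^{\psi}_{\beta}$ coincides a.e. with a polynomial from $\mathcal{T}_{2n-1}$ orthogonal to $1$, hence $\rho_{n}(f;\cdot)\equiv 0$ and $\mathcal{F}:=f$ works with $\xi=0$. Assume $E>0$. I take $\mathcal{F}:=\mathcal{J}^{\psi}_{\beta}\widetilde{\varphi}$ (the additive constant is immaterial) with the two‑spike derivative
\[
\widetilde{\varphi}(t)=\frac{E}{4\delta}\bigl(\chi_{\Delta_{1}}(t)-\chi_{\Delta_{2}}(t)\bigr),\qquad
\Delta_{1}=[t_{+}-\delta,\,t_{+}+\delta],\quad \Delta_{2}=[t_{-}-\delta,\,t_{-}+\delta],
\]
where $0<\delta<\pi/(2n)$ and $t_{\pm}$, together with a point $x^{*}$, are chosen so that $|t_{+}-t_{-}|=\pi/n$ and, writing $v_{\pm}:=x^{*}-t_{\pm}$, one has $|v_{\pm}|\le\pi/n$, $nv_{+}-\frac{\beta\pi}{2}\equiv 0$ and $nv_{-}-\frac{\beta\pi}{2}\equiv\pi\pmod{2\pi}$. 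Then $\widetilde{\varphi}\in L_{1}$, $\widetilde{\varphi}\perp 1$, so $\mathcal{F}\in C^{\psi}_{\beta}L_{1}$; moreover $E_{n}(\widetilde{\varphi})_{L_{1}}\le\|\widetilde{\varphi}\|_{1}=E$, while testing $\widetilde{\varphi}$ against $h(t)=\operatorname{sgn}\sin\bigl(n(t-a)\bigr)$ — a function of sup‑norm $1$ orthogonal to $\mathcal{T}_{2n-1}$ which, for a suitable $a$, equals $+1$ on $\Delta_{1}$ and $-1$ on $\Delta_{2}$ — yields $\int_{0}^{2\pi}\widetilde{\varphi}h=E$, so by $L_{1}$‑duality $E_{n}(\widetilde{\varphi})_{L_{1}}=E=E_{n}(f^{\psi}_{\beta})_{L_{1}}$.

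\emph{Evaluating the deviation.} Applying the representation of the first part to $\mathcal{F}$, interchanging summation and integration and integrating the cosines over the two $\delta$‑intervals, one obtains
\[
\rho_{n}(\mathcal{F};x^{*})=\frac{E}{2\pi}\sum_{k=n}^{\infty}\psi(k)\,\frac{\sin k\delta}{k\delta}\Bigl[\cos\bigl((k-n)v_{+}\bigr)+\cos\bigl((k-n)v_{-}\bigr)\Bigr].
\]
Writing the bracketed factor as $2$ minus the quantity $\bigl(1-\cos((k-n)v_{+})\bigr)+\bigl(1-\cos((k-n)v_{-})\bigr)+\bigl(1-\tfrac{\sin k\delta}{k\delta}\bigr)\bigl(\cos((k-n)v_{+})+\cos((k-n)v_{-})\bigr)$, estimating $1-\cos\theta\le|\theta|$ together with $|v_{\pm}|\le\pi/n$, and choosing $\delta$ so small that $\sum_{k\ge n}\psi(k)\bigl(1-\tfrac{\sin k\delta}{k\delta}\bigr)\le\tfrac{\pi}{n}\sum_{k=1}^{\infty}k\psi(k+n)$ — which is possible because that left‑hand sum is $\mathcal{O}(\delta)$, by splitting the series at $k\sim 1/\delta$ and using $\sum_{k}k\psi(k)<\infty$ — one gets
\[
\frac{1}{\pi}\sum_{k=n}^{\infty}\psi(k)\,E-\frac{2}{n}\sum_{k=1}^{\infty}k\psi(k+n)\,E\;\le\;\rho_{n}(\mathcal{F};x^{*})\;\le\;\|\rho_{n}(\mathcal{F};\cdot)\|_{C}.
\]
Combining this with the universal bound $\|\rho_{n}(\mathcal{F};\cdot)\|_{C}\le\frac{1}{\pi}\sum_{k\ge n}\psi(k)\,E$ of the first part, the number $\xi$ defined by \eqref{Theorem2Eq} lies in $[-2,0]$.

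\emph{Main obstacle.} The delicate step is the last one: splitting the shortfall of $\rho_{n}(\mathcal{F};x^{*})$ from the ideal value $\frac{1}{\pi}\sum_{k\ge n}\psi(k)\,E$ into a ``geometric'' part (from $t_{\pm}$ not exactly extremizing $\Psi_{\beta,n}$) and a ``smoothing'' part (from $\delta>0$), while keeping \emph{both} summable under the single hypothesis $\sum_{k}k\psi(k)<\infty$ — the sharper bound $1-\cos\theta\le\theta^{2}/2$ would bring in $\sum_{k}k^{2}\psi(k)$, which may diverge — and then tuning $\delta$ so the total stays within $\frac{2}{n}\sum_{k\ge 1}k\psi(k+n)\,E$. (The elementary case $\psi(k)=0$ for all $k>n$ is treated separately.)
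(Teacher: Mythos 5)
Your treatment of the inequality \eqref{Theorem2Ineq1} coincides with the paper's: the representation \eqref{repr}, orthogonality of $\Psi_{\beta,n}$ to $\mathcal{T}_{2n-1}$, insertion of the best approximant, and the crude bound $\|\Psi_{\beta,n}\|_{C}\le\sum_{k\ge n}\psi(k)$. For the sharpness part your route is genuinely different and, as far as I can check, sound. The paper first proves a separate two-sided estimate $\sum_{k\ge n}\psi(k)-\frac{\pi}{n}\sum_{k\ge1}k\psi(k+n)\le\|\Psi_{\beta,n}\|_{C}$ (by evaluating the kernel at $t_{0}=\beta\pi/(2n)$ and applying the mean value theorem to $g_{\psi,n}$), and then builds one narrow spike near the actual maximum of $|\Psi_{-\beta,n}|$ superposed on an $\varepsilon$-small background whose global sign is $\mathrm{sign}\cos(nt+\frac{\beta\pi}{2})$; that sign pattern forces $t_{n-1}^{*}\equiv0$ to be the best $L_{1}$-approximant (the Chebyshev-type criterion from Korneichuk), and the loss from the spike having positive width is paid for by taking $\varepsilon$ small. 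You instead use two opposite spikes at phase-aligned points a half-period $\pi/n$ apart, certify $E_{n}(\widetilde{\varphi})_{L_{1}}=\|\widetilde{\varphi}\|_{1}$ by exhibiting the dual element $\mathrm{sign}\sin(n(t-a))$ rather than invoking the sign criterion, and evaluate the deviation at one explicit point, so that the ``wrong location'' error (controlled by $1-\cos\theta\le|\theta|$ and $|v_{\pm}|\le\pi/n$ --- exactly where $\sum_{k}k\psi(k)<\infty$ enters, as you correctly observe) and the mollification error ($1-\frac{\sin k\delta}{k\delta}=\mathcal{O}(k\delta)$) are absorbed into the same $\frac{2}{n}\sum_{k\ge1}k\psi(k+n)$ budget. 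This makes the argument self-contained --- no separate norm lemma, no need to locate the kernel's extremum --- at the cost of the phase bookkeeping; both arguments land on $\xi\in[-2,0]$, and your final sandwich against the upper bound of the first part is the same closing step as the paper's.

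One caveat, which you flag yourself: when $\psi(k)=0$ for all $k>n$ one has $\sum_{k\ge1}k\psi(k+n)=0$, your admissible range for $\delta$ is empty, and \eqref{Theorem2Eq} would have to be an exact equality. Note that the paper's proof has the identical gap --- the range \eqref{form_varepsilon} for $\varepsilon$ is then also empty --- so this does not put your argument at a disadvantage; but ``treated separately'' is optimistic, since in that degenerate case a duality-attainment argument shows no $L_{1}$-function can realize equality in \eqref{Theorem2Eq} unless $E_{n}(f^{\psi}_{\beta})_{L_{1}}=0$, so the case must be excluded rather than handled.
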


\begin{proof}[Доведення Теореми~\ref{theorem2}]

Нехай $f\in C^{\psi}_{\beta}L_{1}$. Тоді згідно з (\ref{conv}) в кожній точці $x\in \mathbb{R}$ має місце  інтегральне представлення
\begin{equation}\label{repr}
\rho_{n}(f;x)=f(x)-S_{n-1}(f;x)=
\frac{1}{\pi}\int\limits_{-\pi}^{\pi}f^{\psi}_{\beta}(t)\Psi_{\beta,n}(x-t)dt,
\end{equation}
де
 \begin{equation}\label{kernelN}
\Psi_{\beta,n}(t):=
\sum\limits_{k=n}^{\infty}\psi(k)\cos\Big(kt-\frac{\beta\pi}{2}\Big),  \  \beta\in\mathbb{R}.
\end{equation}
 
Функція $\Psi_{\beta,n}(t)$ ортогональна до будь--якого тригонометричного полінома  $t_{n-1}$ порядку не вищого за $n-1$. Тоді. для довільного полінома $t_{n-1} \in \mathcal{T}_{2n-1}$ отримаємо
\begin{equation}\label{for1}
\rho_{n}(f;x)=
\frac{1}{\pi}\int\limits_{-\pi}^{\pi}\delta_{n}(t)\Psi_{\beta,n}(x-t)dt,
\end{equation}
де
\begin{equation}\label{delta}
\delta_{n}(\cdot)=\delta_{n}(\psi,\beta,t_{n-1};\cdot):=f^{\psi}_{\beta}(\cdot)-t_{n-1}(\cdot).
\end{equation}

Виберемо в якості $t_{n-1}$ в формулі (\ref{for1})  поліном $t_{n-1}^{*}$ найкращого наближення функції  $f^{\psi}_{\beta}$ в просторі  $L_{1}$, тобто такий що
$$
\| f^{\psi}_{\beta}-t^{*}_{n-1}\|_{1}=E_{n}(f^{\psi}_{\beta})_{L_{1}}, \ \ 
$$
 Тоді, використовуючи нерівність
\begin{equation}\label{HolderIneq}
\bigg\|\int\limits_{-\pi}^{\pi}K(t-u)\varphi(u)du \bigg\|_{C}\leq \|K\|_{p'}\|\varphi\|_{p},
\end{equation}
$$
\varphi\in L_{p}, \ \ K\in L_{p'}, \ \ 1\leq p\leq\infty, \ \ \frac{1}{p}+\frac{1}{p'}=1
$$
(див., наприклад, \cite[c. 43]{Korn}), отримуємо
\begin{equation}\label{for2}
\|f(\cdot)-S_{n-1}(f;\cdot)\|_{C}
\leq \left\|
\frac{1}{\pi}\int\limits_{-\pi}^{\pi}  (f^{\psi}_{\beta}(t)-t^{*}_{n-1}(t))  \Psi_{\beta,n}(\cdot-t)dt
\right\|_{C}
\leq
\frac{1}{\pi}\|\Psi_{\beta,n}\|_{C}E_{n}(f^{\psi}_{\beta})_{L_{1}}.
\end{equation}

Знайдемо двосторонні оцінки норми $\|\Psi_{\beta,n}\|_{C}$.
Покажемо, що при всіх  $n\in\mathbb{N}$ i $\beta\in\mathbb{R}$ справедлива оцінка
\begin{equation}\label{f11}
\sum\limits_{k=n}^{\infty}\psi(k)-
\frac{\pi}{n} \sum\limits_{k=1}^{\infty}k\psi(k+n)
\leq
\|\Psi_{\beta,n}\|_{C}\leq
\sum\limits_{k=n}^{\infty}\psi(k).
\end{equation}

Оцінка зверху для $\|\Psi_{\beta,n}\|_{C}$ в (\ref{f11}) випливає безпосередньо з (\ref{kernelN}).

 Для оцінки $\|\Psi_{\beta,n}\|_{C}$ знизу   представимо функцію $ \Psi_{\beta,n}(t)$, яка означена формулою  (\ref{kernelN}), у вигляді
\begin{equation}\label{pp}
  \Psi_{\beta,n}(t)=
g_{\psi,n}(t)\cos\Big(nt-\frac{\beta\pi}{2}\Big)+h_{\psi,n}(t)\sin\Big(nt-\frac{\beta\pi}{2}\Big),
\end{equation}
де
\begin{equation}\label{g}
  g_{\psi,n}(t):=
\sum\limits_{k=0}^{\infty}\psi(k+n)\cos kt,
\end{equation}
\begin{equation}\label{h}
 h_{\psi,n}(t):=
-\sum\limits_{k=0}^{\infty}\psi(k+n)\sin kt.
\end{equation}

Оскільки величина  $\|\Psi_{\beta,n}\|_{C}$ періодична з періодом 2 за параметром $\beta$, то, не зменшуючи загальності, можна вважати, що $\beta\in [0,2]$.

Позначимо
\begin{equation}\label{t0}
t_{0}:=\frac{\beta\pi}{2n}, \ \ \beta\in [0,2].
\end{equation}

В силу (\ref{pp})
\begin{equation}\label{Psi_t0}
  \Psi_{\beta,n}(t_{0})=
g_{\psi,n}(t_{0}).
\end{equation}

Тоді

\begin{align}\label{Psi_t0_1}
&\|\Psi_{\beta,n}\|_{C} \geq | \Psi_{\beta,n}(t_{0})|
=|g_{\psi,n}(t_{0})|
=|g_{\psi,n}(0)+ (g_{\psi,n}(t_{0})-g_{\psi,n}(0))|
 \notag \\
&\geq
|g_{\psi,n}(0)|-\left| (g_{\psi,n}\left(\frac{\beta\pi}{2n}\right)-g_{\psi,n}(0)) \right|
=\sum\limits_{k=n}^{\infty}\psi(k)-
\left| (g_{\psi,n}\left(\frac{\beta\pi}{2n}\right)-g_{\psi,n}(0)) \right|.
\end{align}

Використовуючи теорему про середнє, маємо
\begin{equation}\label{Psi_t0_2}
\left| g_{\psi,n}\left(\frac{\beta\pi}{2n}\right)-g_{\psi,n}(0) \right|
\leq
\| g_{\psi,n}^{'} \|_{C}\frac{\beta\pi}{2n}\leq 
\frac{\pi}{n} \sum\limits_{k=1}^{\infty}k\psi(k+n).
\end{equation}

Із  (\ref{Psi_t0_1}) i  (\ref{Psi_t0_2}) випливає шукана оцінка знизу для норм $\|\Psi_{\beta,n}\|_{C}$ в співвідношенні (\ref{f11})
\begin{equation}\label{f11_2}
\|\Psi_{\beta,n}\|_{C}\geq
\sum\limits_{k=n}^{\infty}\psi(k)-
\frac{\pi}{n} \sum\limits_{k=1}^{\infty}k\psi(k+n).
\end{equation}

Оцінку (\ref{f11}) можна записати у вигляді формули
\begin{equation}\label{f11_3}
\|\Psi_{\beta,n}\|_{C}=
\sum\limits_{k=n}^{\infty}\psi(k)+
\frac{\Theta_{1}\pi}{n} \sum\limits_{k=1}^{\infty}k\psi(k+n),
\end{equation}
де для величини $\Theta_{1}=\Theta_{1}(n,\beta,\psi)$ виконуються нерівності
\begin{equation}\label{theta}
-1\leq \Theta_{1}\leq 0.
\end{equation}

Отже, із (\ref{for2}) і (\ref{f11_3}) випливає нерівність (\ref{Theorem2Ineq1}) з  Теореми~\ref{theorem2}.

Доведемо другу частину Теореми~\ref{theorem2}. Для цього нам необхідно для довільної функції  $\varphi\in L_{1}$ знайти функцію $\Phi(\cdot)=\Phi(\varphi, \cdot)\in L_{1}$ таку, що
$E_{n}(\Phi)_{L_{1}}=E_{n}(\varphi)_{L_{1}}$ і для якої  виконується рівність
\begin{align}\label{th2Eq1}
&\frac{1}{\pi}\left|\int\limits_{-\pi}^{\pi}\left(\Phi(t) -t_{n-1}^{*}(t)\right)\Psi_{\beta,n}(0-t)dt\right| \notag \\
&= \left( \frac{1}{\pi}
\sum\limits_{k=n}^{\infty}\psi(k)+
 \frac{\xi}{n} \sum\limits_{k=1}^{\infty}k\psi(k+n)\right)E_{n}(\varphi)_{L_{1}},
\end{align}
де $t_{n-1}^{*}$ --- поліном найкращого наближення порядку   $n-1$ функції  $\Phi$ в просторі $L_{1}$.

В цьому випадку для функції $f\in C^{\psi}_{\beta}L_{1}$ існує функція $\Phi(\cdot)=\Phi(f^{\psi}_{\beta};\cdot)$ така, що $E_{n}(\Phi)_{L_{1}}=E_{n}(f^{\psi}_{\beta})_{L_{1}}$, і має місце формула  (\ref{th2Eq1}), де в ролі  $\varphi$ виступає функція $f^{\psi}_{\beta}$.

Розглянемо функцію
\begin{equation*}
\mathcal{F}(\cdot)=\mathcal{J}^{\psi}_{\beta}(\Phi(\cdot)-\frac{a_{0}}{2}),
\end{equation*}
де
\begin{equation*}
a_{0}=a_{0}(\Phi):=\frac{1}{\pi}\int\limits_{-\pi}^{\pi}\Phi(t)dt.
\end{equation*}

Функція  $\mathcal{F}$ є шуканою функцією, оскільки
$\mathcal{F}\in C^{\psi
}_{\beta}L_{1}$ і
\begin{equation*}
E_{n}(\mathcal{F}^{\psi}_{\beta})_{L_{1}}=E_{n}(\Phi-\frac{a_{0}}{2})_{L_{1}}=
E_{n}(\Phi)_{L_{1}}=E_{n}(f^{\psi}_{\beta})_{L_{1}},
\nonumber
\end{equation*}
і на підставі  (\ref{for1}) і (\ref{th2Eq1}) має місце оцінка (\ref{Theorem2Eq}).

Доведемо (\ref{th2Eq1}). Нехай $t^{*}$ --- точка з  проміжку $T=\Big[\frac{\pi(1-\beta)}{2n}, \ 2\pi+\frac{\pi(1-\beta)}{2n} \Big)$, в якій функція $|\Psi_{-\beta,n}(t)|$  набуває свого найбільшого значення, тобто,
\begin{equation*}
|\Psi_{-\beta,n}(t^{*})|=\| \Psi_{-\beta,n}\|_{C}=
\| \Psi_{\beta,n}\|_{C}.
\nonumber
\end{equation*}

Покладемо $\Delta_{k}^{n}:=\Big[\frac{(k-1)\pi}{n}+\frac{\pi(1-\beta)}{2n}, \frac{k\pi}{n}+\frac{\pi(1-\beta)}{2n} \Big)$, $k=1,...,2n$.
Через $k^{*}$   позначимо номер такий, що $t^{*}\in \Delta_{k^{*}}^{n}$.
Оскільки функція $\Psi_{-\beta,n}$ є абсолютно неперервною, то для довільного $\varepsilon>0$ існує сегмент $\ell^{*}=[\xi^{*}, \xi^{*}+\delta]\subset \Delta_{k^{*}}^{n}$ такий, що для довільного  $t\in \ell^{*}$ виконується нерівність
${|\Psi_{\beta,n}(t)|>\| \Psi_{\beta,n}\|_{C}-\varepsilon}$.
Ясно, що $\mathrm{mes}\, \ell^{*}=|\ell^{*}|=\delta<\frac{\pi}{n}$.

Для довільного $\varphi\in L_{1}$ і $\varepsilon>0$ розглянемо функцію $\Phi_{\varepsilon}(t)$, яка на проміжку $T$ означена за допомогою рівностей

\begin{equation}\label{Phi_varepsilon}
\Phi_{\varepsilon}(t)=
\begin{cases}
E_{n}(\varphi)_{L_1}\frac{1-\varepsilon(2\pi-\delta)}{\delta}\mathrm{sign}\cos \Big( nt+\frac{\beta\pi}{2}\Big), & t\in \ell^{*}, \\
E_{n}(\varphi)_{L_1} \varepsilon \  \mathrm{sign}\cos\Big( nt+\frac{\beta\pi}{2}\Big), &
t\in \mathrm{T}\setminus \ell^{*}.
  \end{cases}
\end{equation}

Для функції $\Phi_{\varepsilon}(t)$ при достатньо малих значеннях $\varepsilon>0$  $(\varepsilon\in(0, \frac{1}{2\pi}))$ має місце наступна рівність:
\begin{align}\label{Phi_L1}
 \|\Phi_{\varepsilon}\|_{1}&=
E_{n}(\varphi)_{L_1}\frac{1-\varepsilon(2\pi-\delta)}{\delta}
\int\limits_{\ell^{*}}\Big| \mathrm{sign}\cos\Big( nt+\frac{\beta\pi}{2}\Big) \Big| dt \notag \\
&+
E_{n}(\varphi)_{L_1}\varepsilon
\int\limits_{\mathrm{T}\setminus \ell^{*}}\Big| \mathrm{sign}\cos\Big( nt+\frac{\beta\pi}{2}\Big) \Big| dt
\notag \\
&= 
E_{n}(\varphi)_{L_1}\left(\frac{1-\varepsilon(2\pi-\delta)}{\delta}\delta+\varepsilon(2\pi-\delta)   \right)=E_{n}(\varphi)_{L_1}.
\end{align}

Крім того, згідно з \eqref{Phi_varepsilon}
\begin{equation}\label{sign}
\mathrm{sign} \, \Phi_{\varepsilon}(t)=\mathrm{sign}\cos\Big( nt+\frac{\beta\pi}{2}\Big).
\end{equation}

Оскільки для довільного тригонометричного полінома $t_{n-1}\in \mathcal{T}_{2n-1}$
\begin{equation*}
\int\limits_{0}^{2\pi}t_{n-1}(t)\mathrm{sign}\cos\Big( nt+\frac{\beta\pi}{2}\Big) dt=0,
\nonumber
\end{equation*}
то, з урахуванням (\ref{sign}), виконується рівність
\begin{equation*}
\int\limits_{0}^{2\pi}t_{n-1}(t)\mathrm{sign}\Big(\Phi_{\varepsilon}(t)-0 \Big) dt=0,
\ \ \ t_{n-1}\in\mathcal{T}_{2n-1}.
\nonumber
\end{equation*}

Згідно з Теоремою 1.4.5 роботи \cite[с.28]{Korn}, поліном $t_{n-1}^{*}\equiv0$ є поліномом найкращого наближення функції  $\Phi_{\varepsilon}$ в метриці простору $L_{1}$, тобто, $E_{n}(\Phi_{\varepsilon})_{L_1}=\| \Phi_{\varepsilon}\|_{1}$, отже з (\ref{Phi_L1}) випливає
$E_{n}(\Phi_{\varepsilon})_{L_1}=E_{n}(\varphi)_{L_1}$.

Крім того, для функції $\Phi_{\varepsilon}$
\begin{align}\label{form_eq1}
&\frac{1}{\pi}\int\limits_{-\pi}^{\pi}(\Phi_{\varepsilon}(t)-t_{n-1}^{*}(t))\Psi_{\beta,n}(-t)dt
 =
\frac{1}{\pi}\int\limits_{-\pi}^{\pi}\Phi_{\varepsilon}(t)\Psi_{-\beta,n}(t)dt \notag \\
=&\frac{1-\varepsilon(2\pi-\delta)}{\pi\delta}
E_{n}(\varphi)_{L_1}
\int\limits_{\ell^{*}} \mathrm{sign}\cos\Big( nt+\frac{\beta\pi}{2}\Big) \Psi_{-\beta,n}(t) dt \notag \\
+&
\frac{\varepsilon}{\pi}E_{n}(\varphi)_{L_1}
\int\limits_{\mathrm{T}\setminus \ell^{*}} \mathrm{sign}\cos\Big( nt+\frac{\beta\pi}{2}\Big) \Psi_{-\beta,n}(t) dt.
\end{align}

Враховуючи, що $ \mathrm{sign} \, \Phi_{\varepsilon}(t)=(-1)^{k}$, $t\in \Delta_{k}^{(n)}, \  k=1,..., 2n$, а також вкладення $\ell^{*}\subset \Delta_{k^{*}}^{(n)}$, отримуємо
\begin{align}\label{form_eq2}
&\left|\frac{1-\varepsilon(2\pi-\delta)}{\pi\delta}
E_{n}(\varphi)_{L_1}
\int\limits_{\ell^{*}} \mathrm{sign}\cos\Big( nt+\frac{\beta\pi}{2}\Big) \Psi_{-\beta,n}(t) dt \right| \notag \\
=
&\left|(-1)^{k^{*}}\frac{1-\varepsilon(2\pi-\delta)}{\pi\delta}
E_{n}(\varphi)_{L_1}
\int\limits_{\ell^{*}}  \Psi_{-\beta,n}(t) dt \right| \notag \\
\geq& 
\frac{1-\varepsilon(2\pi-\delta)}{\pi}
E_{n}(\varphi)_{L_1}
\left( \|\Psi_{\beta,n}\|_{C}-\varepsilon\right)
\notag \\
>&
\frac{1-2\pi\varepsilon}{\pi}
E_{n}(\varphi)_{L_1}
\left( \|\Psi_{\beta,n}\|_{C}-\varepsilon\right) \notag \\
=& 
\frac{1}{\pi}
E_{n}(\varphi)_{L_1}
\left( \|\Psi_{\beta,n}|_{C}-
2\pi\varepsilon \|\Psi_{\beta,n}\|_{C}-\varepsilon+2\pi\varepsilon^{2}\right)\notag \\
>&
E_{n}(\varphi)_{L_1} \left(\frac{1}{\pi} \|\Psi_{\beta,n}\|_{C}-
\varepsilon \Big(2 \|\Psi_{\beta,n}\|_{C}+\frac{1}{\pi}\Big) \right).
\end{align}

Крім того, неважко переконатись, що
\begin{align}\label{form_eq3}
&\left| \frac{\varepsilon}{\pi}E_{n}(\varphi)_{L_1}
\int\limits_{\mathrm{T}\setminus \ell^{*}} \mathrm{sign}\cos\Big( nt+\frac{\beta\pi}{2}\Big) \Psi_{-\beta,n}(t) dt\right| \notag \\
\leq & \frac{\varepsilon}{\pi}E_{n}(\varphi)_{L_1} \| \Psi_{\beta,n}\|_{C}(2\pi-\delta)
<2\varepsilon E_{n}(\varphi)_{L_1} \| \Psi_{\beta,n}\|_{C}.
\end{align}

З формул (\ref{form_eq1})--(\ref{form_eq3}) випливає наступна оцінка:
\begin{align}\label{form_eq4}
& \left| \int\limits_{-\pi}^{\pi}\frac{1}{\pi}(\Phi_{\varepsilon}(t)-t_{n-1}^{*}(t)) \Psi_{\beta,n}(-t)dt \right| \notag \\
>&
E_{n}(\varphi)_{L_1}\left(\frac{1}{\pi} \| \Psi_{\beta,n}\|_{C}
-\varepsilon\Big(4 \| \Psi_{\beta,n}\|_{C}+\frac{1}{\pi}\Big)\right).
\end{align}

Виберемо  $\varepsilon$ настільки малим, щоб
\begin{equation}\label{form_varepsilon}
\varepsilon< \frac{\pi \sum\limits_{k=1}^{\infty}k \psi(k+n)}{n\left(1+4\pi \sum\limits_{k=n}^{\infty} \psi(k)\right)}
\end{equation}
і для цього
 $\varepsilon$ покладемо
\begin{equation}\label{form_eq13}
\Phi(t)=\Phi_{\varepsilon}(t).
\end{equation}

Функція $\Phi(t)$ є шуканою функцією, оскільки 
 ${E_{n}(\Phi)_{L_1}=E_{n}(\varphi)_{L_1}}$ і згідно з (\ref{f11}), (\ref{form_eq4})  i (\ref{form_varepsilon}) 
 \begin{align}\label{form_eq14}
&\left| \frac{1}{\pi} \int\limits_{-\pi}^{\pi}(\Phi(t)-t_{n-1}^{*}(t))\Psi_{\beta,n}(-t)dt\right| \notag \\
\geq
& \left( \frac{1}{\pi}
\sum\limits_{k=n}^{\infty}\psi(k)-
 \frac{1}{n} \sum\limits_{k=1}^{\infty}k\psi(k+n) - \varepsilon\left(4 \sum\limits_{k=n}^{\infty} \psi(k)+ \frac{1}{\pi}\right)    \right) E_{n}(\varphi)_{L_1} \notag \\
 \geq
& \left( \frac{1}{\pi}
\sum\limits_{k=n}^{\infty}\psi(k)-
 \frac{1}{n} \sum\limits_{k=1}^{\infty}k\psi(k+n) 
 - 
 \frac{\pi \sum\limits_{k=1}^{\infty}k \psi(k+n)}{n\left(1+4\pi \sum\limits_{k=n}^{\infty} \psi(k)\right)}
\left(4 \sum\limits_{k=n}^{\infty} \psi(k)+ \frac{1}{\pi}\right)    \right) E_{n}(\varphi)_{L_1} \notag \\
\geq
 & \left( \frac{1}{\pi}
\sum\limits_{k=n}^{\infty}\psi(k)-
 \frac{2}{n} \sum\limits_{k=1}^{\infty}k\psi(k+n) \right) E_{n}(\varphi)_{L_1}.
\end{align}

 З формул (\ref{form_eq14}), (\ref{for2}) і (\ref{f11})  випливає (\ref{th2Eq1}).
Теорему~\ref{theorem2} доведено.
\end{proof}

%%%%%%%%%%%%%%%%%%%%%%%%%%%%%%%%%%%%%%%%
%%%%%%%%%%%%%%%%%%%%%%%%%%%%%%%%%%%%%%%%
%%%%%%%%%%%%%%%%%%%%%%%%%%%%%%%%%%%%%%%%
%%%%%%%%%%%%%%%%%%%%%%%%%%%%%%%%%%%%%%%%

Зрозуміло, що формулу \eqref{Theorem2Ineq1} теореми \ref{theorem2} можна записати однотипно з  формулою \eqref{Theorem2Eq} у наступному вигляді:

\begin{equation}\label{Theorem2IneqCase1}
\|f(\cdot)-S_{n-1}(f;\cdot)\|_{C}\leq
\left(  \frac{1}{\pi}
\sum\limits_{k=n}^{\infty}\psi(k)+
 \frac{\Theta_{1}}{n} \sum\limits_{k=1}^{\infty}k\psi(k+n) \right)
 E_{n}(f^{\psi}_{\beta})_{L_{1}},
 \end{equation}
де $\Theta_{1}=\Theta_{1}(n,\beta,\psi)$ задовольняє нерівності \eqref{theta}.

\begin{theorem}\label{theorem1}
Нехай  $\sum\limits_{k=1}^{\infty}k\psi(k)<\infty$, $\psi(k)\geq 0$, $k=1,2,...$ і  $\beta\in\mathbb{R}$. Тоді при усіх $n\in\mathbb{N}$ має місце формула
\begin{equation}\label{Theorem1Asymp}
{\cal E}_{n}(C^{\psi}_{\beta,1})_{C}=
\frac{1}{\pi}
\sum\limits_{k=n}^{\infty}\psi(k)+
 \frac{\Theta_{2}}{n} \sum\limits_{k=1}^{\infty}k\psi(k+n),
 \end{equation}
де  для величини $\Theta_{2}=\Theta_{2}(n,\beta,\psi)$ виконуються нерівності $-1\leq \Theta_{2} \leq 0$. 
\end{theorem}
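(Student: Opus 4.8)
The plan is to estimate $\mathcal{E}:={\cal E}_{n}(C^{\psi}_{\beta,1})_{C}$ from above and from below, the two bounds together forcing the coefficient $\Theta_{2}$ in \eqref{Theorem1Asymp} to lie in $[-1,0]$. First, a reduction: since $\cos\bigl(kt-\tfrac{(\beta+2)\pi}{2}\bigr)=-\cos\bigl(kt-\tfrac{\beta\pi}{2}\bigr)$, passing from $\beta$ to $\beta+2$ only changes the sign of the kernel $\Psi_{\beta,n}$, and this changes neither $\mathcal{E}$ nor the right-hand side of \eqref{Theorem1Asymp}; hence, exactly as in the proof of Theorem~\ref{theorem2}, it suffices to treat $\beta\in[0,2]$. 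The upper bound is then immediate: for $f\in C^{\psi}_{\beta,1}$ we have $\|f^{\psi}_{\beta}\|_{1}\le1$, so from \eqref{repr} and inequality \eqref{HolderIneq} with $p=1$, $p'=\infty$,
\[
\|f(\cdot)-S_{n-1}(f;\cdot)\|_{C}\le\frac{1}{\pi}\|\Psi_{\beta,n}\|_{C}\,\|f^{\psi}_{\beta}\|_{1}\le\frac{1}{\pi}\|\Psi_{\beta,n}\|_{C}\le\frac{1}{\pi}\sum\limits_{k=n}^{\infty}\psi(k),
\]
the last step being the right-hand estimate in \eqref{f11}; thus $\mathcal{E}\le\frac{1}{\pi}\sum_{k=n}^{\infty}\psi(k)$, which already gives $\Theta_{2}\le0$.

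For the lower bound I exhibit near-extremal functions of the class. Let $t_{0}=\tfrac{\beta\pi}{2n}$ as in \eqref{t0}. Evaluating the phase $nt-\tfrac{\beta\pi}{2}$ in \eqref{pp} at $t_{0}$ and at $t_{0}-\tfrac{\pi}{n}$ yields $0$ and $-\pi$ respectively, whence
\[
\Psi_{\beta,n}(t_{0})=g_{\psi,n}(t_{0}),\qquad \Psi_{\beta,n}\bigl(t_{0}-\tfrac{\pi}{n}\bigr)=-g_{\psi,n}\bigl(t_{0}-\tfrac{\pi}{n}\bigr).
\]
For small $\delta>0$ pick disjoint arcs $I^{+},I^{-}$ of length $\delta$ centred at $-t_{0}$ and $-\bigl(t_{0}-\tfrac{\pi}{n}\bigr)$ (this is possible since $t_{0}$ and $t_{0}-\tfrac{\pi}{n}$ are distinct modulo $2\pi$), and let $\varphi_{\delta}$ be the $2\pi$-periodic function equal to $\tfrac{1}{2\delta}$ on $I^{+}$, to $-\tfrac{1}{2\delta}$ on $I^{-}$, and to $0$ elsewhere on the period. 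Then $\|\varphi_{\delta}\|_{1}=1$ and $\varphi_{\delta}\perp1$, so $f_{\delta}:=\mathcal{J}^{\psi}_{\beta}\varphi_{\delta}\in C^{\psi}_{\beta,1}$, and since $\Psi_{\beta,n}\in C$ we obtain, letting $\delta\to0$,
\[
\mathcal{E}\ \ge\ \Bigl|\frac{1}{\pi}\int\limits_{-\pi}^{\pi}\varphi_{\delta}(t)\,\Psi_{\beta,n}(-t)\,dt\Bigr|\ \longrightarrow\ \frac{1}{2\pi}\bigl|\Psi_{\beta,n}(t_{0})-\Psi_{\beta,n}\bigl(t_{0}-\tfrac{\pi}{n}\bigr)\bigr|\ \ge\ \frac{1}{2\pi}\Bigl(g_{\psi,n}(t_{0})+g_{\psi,n}\bigl(t_{0}-\tfrac{\pi}{n}\bigr)\Bigr).
\]

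It remains to bound $g_{\psi,n}(t_{0})+g_{\psi,n}\bigl(t_{0}-\tfrac{\pi}{n}\bigr)=\sum_{k=0}^{\infty}\psi(k+n)\bigl(\cos kt_{0}+\cos k\bigl(t_{0}-\tfrac{\pi}{n}\bigr)\bigr)$ from below by comparing it with $2\sum_{k=0}^{\infty}\psi(k+n)$. The $k=0$ term of the difference vanishes, and for $k\ge1$, using $1-\cos\theta\le|\theta|$ for all real $\theta$ together with $kt_{0}=\tfrac{k\beta\pi}{2n}\in\bigl[0,\tfrac{k\pi}{n}\bigr]$ and $\tfrac{k\pi}{n}-kt_{0}=\tfrac{k\pi}{n}\bigl(1-\tfrac{\beta}{2}\bigr)\in\bigl[0,\tfrac{k\pi}{n}\bigr]$ (here the restriction $\beta\in[0,2]$ enters),
\[
2-\cos kt_{0}-\cos\bigl(kt_{0}-\tfrac{k\pi}{n}\bigr)=(1-\cos kt_{0})+\Bigl(1-\cos\bigl(kt_{0}-\tfrac{k\pi}{n}\bigr)\Bigr)\le kt_{0}+\Bigl(\tfrac{k\pi}{n}-kt_{0}\Bigr)=\tfrac{k\pi}{n}.
\]
Multiplying by $\psi(k+n)\ge0$ and summing, $g_{\psi,n}(t_{0})+g_{\psi,n}\bigl(t_{0}-\tfrac{\pi}{n}\bigr)\ge 2\sum_{k=n}^{\infty}\psi(k)-\tfrac{\pi}{n}\sum_{k=1}^{\infty}k\psi(k+n)$, and therefore
\[
\mathcal{E}\ \ge\ \frac{1}{\pi}\sum\limits_{k=n}^{\infty}\psi(k)-\frac{1}{2n}\sum\limits_{k=1}^{\infty}k\psi(k+n)\ \ge\ \frac{1}{\pi}\sum\limits_{k=n}^{\infty}\psi(k)-\frac{1}{n}\sum\limits_{k=1}^{\infty}k\psi(k+n).
\]
Together with $\mathcal{E}\le\frac{1}{\pi}\sum_{k=n}^{\infty}\psi(k)$ from the first step — and the trivial $\mathcal{E}\ge0$, which is needed only in the marginal situation where $\tfrac1n\sum_{k\ge1}k\psi(k+n)$ exceeds $\tfrac1\pi\sum_{k\ge n}\psi(k)$; note also that if $\sum_{k\ge1}k\psi(k+n)=0$ then $\psi(k)=0$ for $k>n$ and both sides of \eqref{Theorem1Asymp} equal $\psi(n)/\pi$, so $\Theta_{2}=0$ may be taken — these bounds show that \eqref{Theorem1Asymp} holds with $\Theta_{2}=\Theta_{2}(n,\beta,\psi)\in[-1,0]$.

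The point that needs care is the economical choice of the two evaluation points. They must be spaced exactly $\pi/n$ apart so that $\cos\bigl(nt-\tfrac{\beta\pi}{2}\bigr)$ switches sign between them, turning $\Psi_{\beta,n}$ into $g_{\psi,n}$ at $t_{0}$ and into $-g_{\psi,n}$ at $t_{0}-\tfrac{\pi}{n}$; and one must take $t_{0}-\tfrac{\pi}{n}$ rather than $t_{0}+\tfrac{\pi}{n}$, because only with this choice do the residual phases $kt_{0}$ and $\tfrac{k\pi}{n}-kt_{0}$ add up to exactly $\tfrac{k\pi}{n}$ for every $\beta\in[0,2]$ — which is precisely what keeps the remainder controlled by $\tfrac1n\sum_{k}k\psi(k+n)$ with the constant that puts $\Theta_{2}$ in $[-1,0]$. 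The remaining steps — checking $\varphi_{\delta}\in C^{\psi}_{\beta,1}$ and passing to the limit $\delta\to0$ — are routine.
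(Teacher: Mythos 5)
Your proof is correct, and for the lower bound it takes a somewhat different route from the paper. The paper first reduces ${\cal E}_{n}(C^{\psi}_{\beta,1})_{C}$ to $\frac{1}{\pi}\sup_{\varphi\in B_{1}^{0}}\int_{-\pi}^{\pi}\varphi(t)\Psi_{\beta,n}(t)\,dt$, invokes the duality relation \eqref{f4} to identify this with $\frac{1}{\pi}\inf_{\lambda}\|\Psi_{\beta,n}-\lambda\|_{C}$, and then proves Lemma~\ref{Lemma_norm}, sandwiching that quantity between $I_{n}^{(3)}=\frac{1}{2}\|\Psi_{\beta,n}(\cdot+\frac{\pi}{n})-\Psi_{\beta,n}(\cdot)\|_{C}$ and $\|\Psi_{\beta,n}\|_{C}$; the lower bound for $I_{n}^{(3)}$ again comes from evaluating the kernel at two points spaced $\pi/n$ apart with $t_{0}=\frac{\beta\pi}{2n}$ and controlling $|g_{\psi,n}(t)-g_{\psi,n}(0)|$ by $\|g_{\psi,n}'\|_{C}|t|\le\frac{\pi}{n}\sum k\psi(k+n)$. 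You bypass both the duality step and the intermediate functional $I_{n}^{(3)}$ by exhibiting explicit near-extremal elements of $B_{1}^{0}$ (a normalized dipole concentrating at $-t_{0}$ and $-(t_{0}-\frac{\pi}{n})$), which realizes $\frac{1}{2\pi}|\Psi_{\beta,n}(t_{0})-\Psi_{\beta,n}(t_{0}-\frac{\pi}{n})|$ directly as a lower bound for ${\cal E}_{n}$; and you replace the mean-value-theorem estimate by the termwise inequality $1-\cos\theta\le|\theta|$, which amounts to the same bound on $g_{\psi,n}$. Your approach is more self-contained (no appeal to the duality theorem from \cite{Korn}) and, because your careful choice of the second node $t_{0}-\frac{\pi}{n}$ makes the two residual phases $kt_{0}$ and $\frac{k\pi}{n}-kt_{0}$ sum to exactly $\frac{k\pi}{n}$, you even obtain the slightly sharper lower bound $\frac{1}{\pi}\sum_{k\ge n}\psi(k)-\frac{1}{2n}\sum_{k\ge1}k\psi(k+n)$, i.e.\ $\Theta_{2}\ge-\frac12$, which of course implies the stated $-1\le\Theta_{2}\le0$; what the paper's duality-based argument buys instead is the reusable two-sided Lemma~\ref{Lemma_norm} on the three norms $I_{n}^{(1)},I_{n}^{(2)},I_{n}^{(3)}$, which it exploits elsewhere. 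Your parenthetical appeal to ${\cal E}_{n}\ge0$ is superfluous — your constructed lower bound already covers the ``marginal'' case — but this is harmless.
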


%\emph{Proof of Theorem 1.}

\begin{proof} 
Згідно з  \eqref{FourierSum} i \eqref{repr}  отримуємо, що
\begin{equation}\label{f1}
{\cal E}_{n}(C^{\psi}_{\beta,1})_{C}=
\frac{1}{\pi}\sup\limits_{\varphi\in B_{1}^{0}}\bigg\|\int\limits_{-\pi}^{\pi} \varphi(t)\Psi_{\beta,n}(x-t)dt\bigg\|_{C}, 
\end{equation}
де $\Psi_{\beta,n}(\cdot)$ означена рівністю (\ref{kernelN}), а $B_{1}^{0}:=\left\{\varphi \in L_{1}: \ ||\varphi||_{1}\leq 1, \  \varphi\perp1\right\}.$

Беручи до уваги інваріантність множини   $B_{1}^{0}$ відносно зсуву аргументу, з
 (\ref{f1})  отримуємо
\begin{equation}\label{f3}
{\cal E}_{n}(C^{\psi}_{\beta,1})_{C}=
\frac{1}{\pi}\sup\limits_{\varphi\in B_{1}^{0}}\int\limits_{-\pi}^{\pi}  \varphi(t) \Psi_{\beta,n}(t)dt.
\end{equation}

 На основі співвідношення двоїстості (див., напр., \cite{Korn})  маємо
\begin{equation}\label{f4}
\sup\limits_{\varphi\in B_{1}^{0}}\int\limits_{-\pi}^{\pi}\Psi_{\beta,n}(t)\varphi(t)dt=
\inf\limits_{\lambda\in\mathbb{R}}\|\Psi_{\beta,n}(t)-\lambda\|_{C}, \
\end{equation}

Для знаходження двосторонньої оцінки величини $\inf\limits_{\lambda\in\mathbb{R}}\|\Psi_{\beta,n}(t)-\lambda\|_{C}$ нам буде корисним наступне твердження, яке може знайти і самостійне застосування.

\begin{lemma}\label{Lemma_norm}
Нехай $\psi(k)\geq 0$, $\sum\limits_{k=1}^{\infty}k\psi(k) <\infty$. Тоді при всіх $\beta\in\mathbb{R}$ i $n\in\mathbb{N}$  для кожної з величин
\begin{equation}\label{Norm1}
I_{n}^{(1)}
=I_{n}^{(1)}(\psi,\beta):=
\|\Psi_{\beta,n}\|_{C},
\end{equation}
\begin{equation}\label{Norm2}
I_{n}^{(2)}
=I_{n}^{(2)}(\psi,\beta):=
\inf\limits_{\lambda\in\mathbb{R}}\|\Psi_{\beta,n}(t)-\lambda\|_{C},
\end{equation}
\begin{equation}\label{Norm3}
I_{n}^{(3)}
=I_{n}^{(3)}(\psi,\beta):=
\frac{1}{2}\left\|\Psi_{\beta,n}\left(t+\frac{\pi}{n}\right)-\Psi_{\beta,n}(t)\right\|_{C}
\end{equation}
виконуються формули
\begin{equation}\label{Norm_asympt}
I_{n}^{(j)}=
\sum\limits_{k=n}^{\infty}\psi(k)+
 \frac{\Theta_{j}\pi}{n} \sum\limits_{k=1}^{\infty}k\psi(k+n), \ \ \ j=1,2,3,
\end{equation}
в яких для будь-якої з величин $\Theta_{j}=\Theta_{j}(n, \beta,\psi), \ j=1,2,3,$ виконуються двосторонні оцінки
\begin{equation*}
-1\leq \Theta_{j}\leq 0, \ \  \ j=1,2,3.
\end{equation*}
\end{lemma}
\begin{proof}[ Доведення Леми~\ref{Lemma_norm}]

Оскільки
\begin{equation}\label{f5}
\inf\limits_{\lambda\in\mathbb{R}}\|\Psi_{\beta,n}(t)-\lambda\|_{C}
\leq
\|\Psi_{\beta,n}\|_{C}
\end{equation}
і
\begin{equation}\label{f6}
\frac{1}{2}\left\|\Psi_{\beta,n}\left(t+\frac{\pi}{n}\right)-\Psi_{\beta,n}(t)\right\|_{C}
\leq
\inf\limits_{\lambda\in\mathbb{R}}\|\Psi_{\beta, n}(t)-\lambda\|_{C},
\end{equation}
то 
$$I_{n}^{(3)}\leq I_{n}^{(2)} \leq I_{n}^{(1)},
$$
і, отже, необхідна оцінка зверху для кожної з величин $I_{n}^{(j)}, \  j=1,2,3$ випливає з (\ref{f11}).

Залишається знайти оцінку знизу для $I_{n}^{(3)}$. В силу (\ref{pp})--(\ref{h}) i (\ref{Norm3})

\begin{align}\label{f7}
I_{n}^{(3)}=&\frac{1}{2}\left\|\Psi_{\beta,n}\left(t+\frac{\pi}{n}\right)-\Psi_{\beta,n}(t)\right\|_{C}
\notag \\
\geq &\frac{1}{2}\left|\Psi_{\beta,n}\left(t_{0}+\frac{\pi}{n}\right)-\Psi_{\beta,n}(t_{0})\right|
\notag \\
=&
\frac{1}{2}\Big|
g_{\psi,n}\Big(t_{0}+\frac{\pi}{n}\Big)\cos\Big(n\Big(t_{0}+\frac{\pi}{n}\Big)-\frac{\beta\pi}{2}\Big)
+h_{\psi,n}\Big(t_{0}+\frac{\pi}{n}\Big)\sin\Big(n\Big(t_{0}+\frac{\pi}{n}\Big)-\frac{\beta\pi}{2}\Big)\notag \\
-& \left(g_{\psi,n}(t_{0})\cos\Big(nt_{0}-\frac{\beta\pi}{2}\Big)
+h_{\psi,n}(t_{0})\sin\Big(nt_{0}-\frac{\beta\pi}{2}\Big)\right)\Big| \notag \\
=&
\frac{1}{2}\Big|-
g_{\psi,n}\Big(t_{0}+\frac{\pi}{n}\Big)-g_{\psi,n}(t_{0}) \Big|
=\frac{1}{2}\Big|
g_{\psi,n}\Big(\frac{\beta\pi-2\pi}{2n}\Big)+g_{\psi,n}\Big(\frac{\beta\pi}{2n}\Big) \Big|
\notag\\
=
&\frac{1}{2}\left| 2g_{\psi,n}(0)+
\left( g_{\psi,n}\Big(\frac{(\beta-2)\pi}{2n}\Big)-g_{\psi,n}(0)\right)
+\left(g_{\psi,n}\Big(\frac{\beta\pi}{2n}\Big)- g_{\psi,n}(0) \right) \right|
\notag\\
\geq
&\left| g_{\psi,n}(0)\right|-
\frac{1}{2}\left| g_{\psi,n}\Big(\frac{(\beta-2)\pi}{2n}\Big)-g_{\psi,n}(0)\right|
-\frac{1}{2}\left|g_{\psi,n}\Big(\frac{\beta\pi}{2n}\Big)- g_{\psi,n}(0) \right|,
\end{align}
де, як і раніше, $t_{0}=\frac{\beta\pi}{2n}$, $\beta\in[0,2]$.

За теоремою про  середнє значення
\begin{equation}\label{f_meanValue}
\left| g_{\psi,n}\Big(\frac{(\beta-2)\pi}{2n}\Big)-g_{\psi,n}(0)\right|
\leq
\| g_{\psi,n}^{'}\|_{C}\frac{|\beta-2|\pi}{2n}
\leq \frac{\pi}{n}\sum\limits_{k=1}^{\infty}k\psi(k+n).
\end{equation}

Аналогічно (див. (\ref{Psi_t0_2}))
\begin{equation}\label{f_meanValue1}
\left| g_{\psi,n}\Big(\frac{\beta\pi}{2n}\Big)-g_{\psi,n}(0)\right|
\leq \frac{\pi}{n}\sum\limits_{k=1}^{\infty}k\psi(k+n).
\end{equation}
Об'єднуючи (\ref{f7})-(\ref{f_meanValue1}), одержуємо шукану оцінку знизу для $I_{n}^{(3)}$
\begin{equation}\label{I3_estimate}
I_{n}^{(3)}
\geq \sum\limits_{k=n}^{\infty}\psi(k)-\frac{\pi}{n}\sum\limits_{k=1}^{\infty}k\psi(k+n).
\end{equation}
Лему~\ref{Lemma_norm} доведено.
\end{proof}

З формул (\ref{f3}), (\ref{f4}), (\ref{Norm2}) i (\ref{Norm_asympt}) випливає, що

\begin{equation*}
{\cal E}_{n}(C^{\psi}_{\beta,1})_{C}=
\frac{1}{\pi}
\sum\limits_{k=n}^{\infty}\psi(k)+
 \frac{\Theta_{2}}{n} \sum\limits_{k=1}^{\infty}k\psi(k+n).
 \end{equation*}
Теорему~\ref{theorem1} доведено. 
 \end{proof}

Зазначимо, що оцінки (\ref{Theorem2Ineq1}), (\ref{Theorem2Eq}) i (\ref{Theorem1Asymp}) є асимптотичними рівностями при $n\rightarrow\infty$, якщо виконується граничне співвідношення \eqref{LimitCase_BestPossibility}, тобто коли
\begin{equation}\label{AsympCondition}
\frac{1}{n}\sum\limits_{k=1}^{\infty}k\psi(k+n)=
o\left(\sum\limits_{k=n}^{\infty}\psi(k)
\right), \ \ n\rightarrow\infty.
\end{equation}
Умова (\ref{AsympCondition}), як буде показано нижче, має місце у ряді важливих випадків, зокрема, коли послідовність  $\psi(k)$ спадає до нуля при $k\rightarrow\infty$ швидше за довільну степеневу послідовність $\frac{1}{k^{r}}$, $r>1$.

\section{Наслідки з Теореми~\ref{theorem1} для класів аналітичних та цілих функцій }\label{corrolarySection_analyticFunctions}

Наведемо приклади важливих функціональних компактів $C^{\psi}_{\beta,1}$, для яких формула (\ref{Theorem1Asymp}) дозволяє записати асимптотичні  рівності для ${\cal E}_{n}(C^{\psi}_{\beta,1})_{C}$ при $n\rightarrow\infty$.

Розглянемо випадок, коли послідовності $\psi(k)$ задовольняють умову Даламбера $\mathcal{D}_{q}$, $q\in[0,1)$:
\begin{equation}\label{DalamberCondition}
\lim\limits_{k\rightarrow\infty}\frac{\psi(k+1)}{\psi(k)}=q, \ \ \ \psi(k)>0.
\end{equation}

Якщо $\psi(k)$ задовольняє умову \eqref{DalamberCondition} при деякому $q\in[0,1)$, то будемо записувати, що $\psi\in \mathcal{D}_{q}$. Нехай спочатку $q=0$.

Згідно з Теоремою 5 роботи \cite{Stepanets_Serdyuk_Shydlich}, твердження про існування послідовності $\psi\in \mathcal{D}_{0}$ такої, що для функції $f$ вірне
 включення $f\in C^{\psi}_{\beta}L_{1}$  
при будь-якому $\beta\in \mathbb{R}$, еквівалентне твердженню про  включення $f\in \mathcal{E}$, де $\mathcal{E}$ --- множина всіх $2\pi$--періодичних дійснозначних на дійсній осі функцій, які допускають аналітичне продовження на всю комплексну площину. Отже, класи $C^{\psi}_{\beta,1}$ при $\psi\in \mathcal{D}_{0}$ належать до множини $2\pi$--періодичних дійснозначних на $\mathbb{R}$ цілих функцій.

\begin{corollary}\label{cor1}
Нехай $\sum\limits_{k=n+1}^{\infty}k\psi(k)<\infty$, $\psi(k)\geq 0, \ k=1,2,...$, $n\in\mathbb{N}$ i $\beta\in \mathbb{R}$, тоді має місце рівномірна 
відносно  всіх параметрах оцінка  
\begin{equation}\label{f12}
{\cal E}_{n}(C^{\psi}_{\beta,1})_{C}=
\frac{1}{\pi}\psi(n)
+
\frac{\mathcal{O}(1)}{n}\sum\limits_{k=n+1}^{\infty}k\psi(k).
\end{equation}
Якщо, крім того, $\psi\in \mathcal{D}_{0}$, то оцінка \eqref{f12} є асимптотичною рівністю
при $n\rightarrow\infty$.
\end{corollary}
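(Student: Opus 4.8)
The plan is to obtain \eqref{f12} directly from Theorem~\ref{theorem1} and then, in the $\mathcal{D}_0$ case, to check that the remainder term is asymptotically negligible against $\psi(n)$. Note first that the hypothesis $\sum_{k=n+1}^{\infty}k\psi(k)<\infty$ differs from the hypothesis of Theorem~\ref{theorem1} only by the finite sum $\sum_{k=1}^{n}k\psi(k)$, so Theorem~\ref{theorem1} applies and gives
\[
{\cal E}_{n}(C^{\psi}_{\beta,1})_{C}=\frac{1}{\pi}\sum_{k=n}^{\infty}\psi(k)+\frac{\Theta_{2}}{n}\sum_{k=1}^{\infty}k\psi(k+n),\qquad -1\le\Theta_{2}\le 0 .
\]

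Next I would reorganise the right-hand side. Peeling off the leading term, $\sum_{k=n}^{\infty}\psi(k)=\psi(n)+\sum_{k=n+1}^{\infty}\psi(k)$, and since $k^{-1}<n^{-1}$ for $k\ge n+1$ one has $\sum_{k=n+1}^{\infty}\psi(k)=\sum_{k=n+1}^{\infty}k^{-1}\cdot k\psi(k)\le\frac1n\sum_{k=n+1}^{\infty}k\psi(k)$. For the second sum, the shift $j=k+n$ gives $\sum_{k=1}^{\infty}k\psi(k+n)=\sum_{j=n+1}^{\infty}(j-n)\psi(j)$, which lies between $0$ and $\sum_{j=n+1}^{\infty}j\psi(j)$. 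Substituting these bounds and using $-1\le\Theta_2\le0$ sandwiches ${\cal E}_{n}(C^{\psi}_{\beta,1})_{C}$ between $\frac1\pi\psi(n)-\frac1n\sum_{k=n+1}^{\infty}k\psi(k)$ and $\frac1\pi\psi(n)+\frac1{\pi n}\sum_{k=n+1}^{\infty}k\psi(k)$, which is exactly \eqref{f12}.

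For the asymptotic assertion I would invoke $\psi\in\mathcal{D}_0$ (noting that this already forces $\sum k\psi(k)<\infty$, by the ratio test applied to $\sum k\psi(k)$, whose term ratios tend to $0$). Fixing $\varepsilon\in(0,1)$, \eqref{DalamberCondition} provides an $N$ with $\psi(k+1)\le\varepsilon\psi(k)$ for all $k\ge N$, hence the uniform geometric majorant $\psi(n+j)\le\varepsilon^{j}\psi(n)$ for $n\ge N$, $j\ge0$. Summing against the weight $n+j$ yields $\frac1n\sum_{k=n+1}^{\infty}k\psi(k)=\frac1n\sum_{j\ge1}(n+j)\psi(n+j)\le\psi(n)\bigl(\tfrac{\varepsilon}{1-\varepsilon}+\tfrac1n\tfrac{\varepsilon}{(1-\varepsilon)^{2}}\bigr)$; letting $n\to\infty$ and then $\varepsilon\to0$ gives $\frac1n\sum_{k=n+1}^{\infty}k\psi(k)=o(\psi(n))$ (in particular \eqref{AsympCondition} holds). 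Combined with \eqref{f12} this produces ${\cal E}_{n}(C^{\psi}_{\beta,1})_{C}=\frac1\pi\psi(n)(1+o(1))$, $n\to\infty$.

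The computation is short; the only genuine point is the last step, where the pointwise ratio condition \eqref{DalamberCondition} has to be converted into a geometric bound holding uniformly for all large $n$, and then the polynomial weight $n+j$ must be dominated by $\varepsilon^{j}$. Everything else — splitting off $\psi(n)$, the trivial bounds $k^{-1}<n^{-1}$ and $j-n\le j$, and bookkeeping of the constants $\Theta_2$ — is routine.
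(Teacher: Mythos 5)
Your argument is correct and follows essentially the same route as the paper: apply Theorem~\ref{theorem1}, merge the two remainder sums into $\frac{1}{n}\sum_{k=n+1}^{\infty}k\psi(k)$ via the index shift $k\mapsto k+n$, and then show this is $o(\psi(n))$ for $\psi\in\mathcal{D}_{0}$ using the geometric majorization coming from \eqref{DalamberCondition}. The only (cosmetic) difference is in the last step, where the paper fixes the ratio bound $\tfrac12$ to get $\frac{1}{n}\sum_{k=n+1}^{\infty}k\psi(k)<10\,\psi(n+1)$ and then uses $\psi(n+1)=o(\psi(n))$, whereas you let the ratio bound $\varepsilon$ tend to $0$ and compare directly with $\psi(n)$.
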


\begin{proof}
Користуючись формулою (\ref{Theorem1Asymp}) Теореми~\ref{theorem1},
 можна записати 
\begin{align*}
{\cal E}_{n}(C^{\psi}_{\beta,1})_{C}&=
\frac{1}{\pi}\psi(n)
+
\mathcal{O}(1)\left( \sum\limits_{k=1}^{\infty}\psi(k+n)
+
 \frac{1}{n} \sum\limits_{k=1}^{\infty}k\psi(k+n)
\right)
\notag \\
&=
\frac{1}{\pi}\psi(n)
+
\frac{\mathcal{O}(1)}{n} \sum\limits_{k=1}^{\infty}(k+n)\psi(k+n)
\notag \\
&=
\frac{1}{\pi}\psi(n)
+
\frac{\mathcal{O}(1)}{n} \sum\limits_{k=n+1}^{\infty}k\psi(k).
\end{align*}
Тим самим оцінку \eqref{f12} доведено. Покажемо, що при $\psi\in \mathcal{D}_{0}$
\begin{equation}\label{f12_111}
\frac{1}{n}\sum\limits_{k=n+1}^{\infty}k\psi(k)=o\left(\psi(n)\right), n\rightarrow\infty.
\end{equation}
Виберемо номери $n$ такими, щоб
\begin{equation}\label{f12_113}
\frac{\psi(k+1)}{\psi(k)}<\frac{1}{2}, \ \ \ k=n, \ n+1, ...
\end{equation}

Тоді, з урахуванням \eqref{f12_113}, маємо
\begin{align}\label{f12_114}
&\frac{1}{n} \sum\limits_{k=1}^{\infty}k\psi(k) = \left(1+\frac{1}{n} \right)\psi(n+1)
+\frac{\psi(n+1)}{n}\sum\limits_{j=2}^{\infty}(n+j)\prod\limits_{\ell=1}^{j-1}\frac{\psi(n+\ell+1)}{\psi(n+\ell)}
\notag \\
&<\psi(n+1) \left(2+\frac{1}{n}\sum\limits_{j=2}^{\infty}\frac{2j}{2^{j-1}} \right)
<\psi(n+1) \left(2+\frac{4}{n}\sum\limits_{j=1}^{\infty}\frac{j}{2^{j}} \right)
<10 \psi(n+1).
\end{align}

Оскільки, в силу  $\psi\in \mathcal{D}_{0}$
\begin{equation}\label{f12_115}
\psi(n+1)= o\left( \psi(n)\right), \ \ n\rightarrow\infty,
\end{equation}
то із \eqref{f12_114} i \eqref{f12_115} випливає \eqref{f12_111}.

Наслідок~\ref{cor1} доведено.
\end{proof}

Зауважимо, що асимптотичну рівність  (\ref{f12}) з залишковим членом, записаним в іншій формі,  було отримано раніше в   \cite{Serdyuk2005} і \cite{Serdyuk2005Lp}. При $\psi\in \mathcal{D}_{0}$ оцінки залишкового члена в   \cite{Serdyuk2005} і \cite{Serdyuk2005Lp} є більш точними, ніж у формулі \eqref{f12}.

Типовими представниками послідовностей, що задовольняють умову $\mathcal{D}_{0}$ є послідовності $\psi(k)=e^{-\alpha k^{-r}}$,  $r>1$, $\alpha>0$. Для породжуваних такими послідовностями класів 
$C^{\psi}_{\beta,1}=C^{\alpha,r}_{\beta,1}$, одержуємо наступне твердження.

\begin{corollary}\label{cor01}
Нехай  $r>1$, $\alpha>0$ і  $\beta\in\mathbb{R}$. Тоді, при $n\geq \left(\frac{3}{\alpha r} \right)^{\frac{1}{r}}-1$, $n\in\mathbb{N}$, має місце рівномірна по всіх розглядуваних параметрах оцінка
\begin{equation}\label{f41}
{\cal E}_{n}(C^{\alpha,r}_{\beta,1})_{C}= 
e^{-\alpha n^{r}}\Big(
\frac{1}{\pi}+\mathcal{O}(1)e^{-\alpha r n^{r-1}}\Big(1+\frac{1}{\alpha r (n+1)^{r-2}}\Big)\Big).
\end{equation}
\end{corollary}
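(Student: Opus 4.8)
The plan is to obtain Corollary~\ref{cor01} directly from Corollary~\ref{cor1} by taking $\psi(k)=e^{-\alpha k^{r}}$ (so that $C^{\psi}_{\beta,1}=C^{\alpha,r}_{\beta,1}$) and then estimating the resulting tail sum. First one checks the hypotheses of Corollary~\ref{cor1}: for $r>0$, $\alpha>0$ the series $\sum_{k=n+1}^{\infty}ke^{-\alpha k^{r}}$ converges and $\psi(k)=e^{-\alpha k^{r}}\ge 0$, so \eqref{f12} gives
\begin{equation*}
{\cal E}_{n}(C^{\alpha,r}_{\beta,1})_{C}=\frac{1}{\pi}e^{-\alpha n^{r}}+\frac{\mathcal{O}(1)}{n}\sum_{k=n+1}^{\infty}ke^{-\alpha k^{r}}.
\end{equation*}
Since the coefficient in front of the tail is already $\mathcal{O}(1)$, only an upper bound for the tail is needed; concretely it remains to show that under the assumption $n\ge(3/(\alpha r))^{1/r}-1$, equivalently $\alpha r(n+1)^{r}\ge 3$, one has
\begin{equation*}
\frac{1}{n}\sum_{k=n+1}^{\infty}ke^{-\alpha k^{r}}\le Ce^{-\alpha n^{r}}e^{-\alpha rn^{r-1}}\Big(1+\frac{1}{\alpha r(n+1)^{r-2}}\Big)
\end{equation*}
with an absolute constant $C$.

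To prove this estimate I would set $m=n+1$ and $f(x)=xe^{-\alpha x^{r}}$. From $f'(x)=e^{-\alpha x^{r}}(1-\alpha r x^{r})$ and $\alpha r m^{r}\ge 3$ it follows that $f$ is nonincreasing on $[m,\infty)$, hence $\sum_{k=m}^{\infty}f(k)\le f(m)+\int_{m}^{\infty}f(x)\,dx$. The integral I would handle by integration by parts using $x^{r-1}e^{-\alpha x^{r}}=-\tfrac{1}{\alpha r}\big(e^{-\alpha x^{r}}\big)'$, which gives
\begin{equation*}
\int_{m}^{\infty}xe^{-\alpha x^{r}}\,dx=\frac{m^{2-r}}{\alpha r}e^{-\alpha m^{r}}+\frac{2-r}{\alpha r}\int_{m}^{\infty}x^{1-r}e^{-\alpha x^{r}}\,dx.
\end{equation*}
For $r\ge2$ the last term is nonpositive and can be dropped, while for $1<r<2$ one estimates it by pulling the decreasing factor $x^{2-2r}\le m^{2-2r}$ out of $\int_{m}^{\infty}x^{r-1}e^{-\alpha x^{r}}\,dx=\tfrac{1}{\alpha r}e^{-\alpha m^{r}}$ and using $0<2-r<1$. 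Either way $\int_{m}^{\infty}xe^{-\alpha x^{r}}\,dx\le\tfrac{m^{2-r}}{\alpha r}e^{-\alpha m^{r}}\big(1+\tfrac{1}{\alpha r m^{r}}\big)$, and collecting terms yields
\begin{equation*}
\sum_{k=n+1}^{\infty}ke^{-\alpha k^{r}}\le (n+1)e^{-\alpha(n+1)^{r}}\Big(1+\frac{1}{\alpha r(n+1)^{r-1}}\Big(1+\frac{1}{\alpha r(n+1)^{r}}\Big)\Big).
\end{equation*}

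It remains to simplify. The condition $\alpha r(n+1)^{r}\ge3$ replaces $1+\tfrac{1}{\alpha r(n+1)^{r}}$ by $\tfrac43$; then $\tfrac{1}{\alpha r(n+1)^{r-1}}\le\tfrac{1}{\alpha r(n+1)^{r-2}}$ and $\tfrac{n+1}{n}\le 2$, and the convexity inequality $(n+1)^{r}-n^{r}=\int_{n}^{n+1}rx^{r-1}\,dx\ge rn^{r-1}$ (valid for $r>1$), used to replace $e^{-\alpha(n+1)^{r}}$ by $e^{-\alpha n^{r}}e^{-\alpha rn^{r-1}}$, produce the claimed tail bound with an absolute $C$. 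Substituting this into the formula for ${\cal E}_{n}(C^{\alpha,r}_{\beta,1})_{C}$ and factoring out $e^{-\alpha n^{r}}$ gives exactly \eqref{f41}. The step I expect to be the main obstacle is keeping the $\mathcal{O}(1)$ uniform across the whole range $r>1$: near $r=1$ the quantity $\alpha r(n+1)^{r-1}$ can be small, and near $r=2$ the integration-by-parts correction changes sign, so one must track both regimes at once. It is precisely the admissibility restriction $n\ge(3/(\alpha r))^{1/r}-1$ that both secures the monotonicity of $f$ needed for the integral comparison and bounds the parasitic factor $1/(\alpha r(n+1)^{r})$ from above, allowing the error to be collapsed into the clean form $e^{-\alpha rn^{r-1}}\big(1+\tfrac{1}{\alpha r(n+1)^{r-2}}\big)$.
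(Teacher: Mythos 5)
Your proposal is correct and follows essentially the same route as the paper: substitute $\psi(k)=e^{-\alpha k^{r}}$ into \eqref{f12}, bound the tail $\frac{1}{n}\sum_{k=n+1}^{\infty}ke^{-\alpha k^{r}}$ by the term $k=n+1$ plus $\int_{n+1}^{\infty}te^{-\alpha t^{r}}dt$, and estimate that integral by integration by parts, with the admissibility condition $\alpha r(n+1)^{r}\ge 3$ securing both the monotonicity and the boundedness of the correction factor. The only (immaterial) difference is bookkeeping: the paper pulls out $\frac{1}{\alpha r t^{r}}\le\frac{1}{\alpha r(n+1)^{r}}$ and solves a self-referential inequality for the integral (formulas \eqref{f44}--\eqref{f46}), whereas you integrate $x^{2-r}\cdot x^{r-1}e^{-\alpha x^{r}}$ by parts and dispose of the remainder by splitting the cases $r\ge 2$ and $1<r<2$; both yield the same bound.
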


\begin{proof}
З формули (\ref{f12}) випливає, що 
\begin{equation}\label{f42}
{\cal E}_{n}(C^{\alpha,r}_{\beta,1})_{C}= 
\frac{1}{\pi}e^{-\alpha n^{r}}
+
\frac{\mathcal{O}(1)}{n}\sum\limits_{k=n+1}^{\infty}ke^{-\alpha k^{r}}.
\end{equation}

Легко переконатись, що при номерах $n$ таких, що $(n+1)^{r}> \frac{1}{\alpha r}$
\begin{equation}\label{f43}
\frac{1}{n}\sum\limits_{k=n+1}^{\infty}ke^{-\alpha k^{r}}
<\frac{1}{n}\left((n+1)e^{-\alpha (n+1)^{r}}+
\int\limits_{n+1}^{\infty}te^{-\alpha t^{r}}dt \right).
\end{equation}

Інтегруючи частинами, отримуємо
\begin{align}\label{f44}
\int\limits_{n+1}^{\infty}te^{-\alpha t^{r}}dt &=
\int\limits_{n+1}^{\infty}t^{2}  \frac{1}{\alpha r t^{r}} \left(-e^{-\alpha t^{r}}\right)' dt
\leq
 \frac{1}{\alpha r (n+1)^{r}} \int\limits_{n+1}^{\infty}t^{2} \left(-e^{-\alpha t^{r}}\right)' dt
 \notag \\
 &= \frac{1}{\alpha r (n+1)^{r}}  \left((n+1)^{2}e^{-\alpha (n+1)^{r}}+2\int\limits_{n+1}^{\infty}te^{-\alpha t^{r}}dt \right).
\end{align}
З останньої нерівності маємо
\begin{equation}\label{f45}
\left(1- \frac{2}{\alpha r (n+1)^{r}} \right)  \int\limits_{n+1}^{\infty}te^{-\alpha t^{r}}dt 
\leq
\frac{(n+1)^{2}e^{-\alpha (n+1)^{r}}}{\alpha r (n+1)^{r}},
\end{equation}
що рівносильно тому, що 
\begin{align}\label{f46}
  \int\limits_{n+1}^{\infty}te^{-\alpha t^{r}}dt 
&\leq
\frac{e^{-\alpha (n+1)^{r}}}{\alpha r (n+1)^{r-2}}\frac{\alpha r(n+1)^{r}}{\alpha r(n+1)^{r}-2} \notag \\
&=
\frac{e^{-\alpha (n+1)^{r}}}{\alpha r (n+1)^{r-2}}\left(1+ \frac{2}{\alpha r(n+1)^{r}-2}\right).
\end{align}
Зі співвідношень (\ref{f43}) і (\ref{f46}) випливає, що
\begin{equation}\label{f47}
\frac{1}{n}\sum\limits_{k=n+1}^{\infty}ke^{-\alpha k^{r}}
=
\mathcal{O}(1)  \left( e^{-\alpha (n+1)^{r}}+
\frac{e^{-\alpha (n+1)^{r}}}{\alpha r (n+1)^{r-2}}\left(1+ \frac{2}{\alpha r(n+1)^{r}-2}\right)
\right).
\end{equation}

Об'єднавши (\ref{f42}) і (\ref{f47}), одержуємо, що при всіх номерах $n$ таких, що $(n+1)^{r}> \frac{3}{\alpha r}$
\begin{align*}
{\cal E}_{n}(C^{\alpha,r}_{\beta,1})_{C}=& 
\frac{1}{\pi}e^{-\alpha n^{r}}
+
\mathcal{O}(1)\left( e^{-\alpha (n+1)^{r}}+
\frac{e^{-\alpha (n+1)^{r}}}{\alpha r (n+1)^{r-2}}\left(1+ \frac{2}{\alpha r(n+1)^{r}-2}\right)
\right) \notag \\
=& 
e^{-\alpha n^{r}}\left(\frac{1}{\pi}
+
\mathcal{O}(1)\left( e^{-\alpha r n^{r-1}}+
\frac{e^{-\alpha r n^{r-1}}}{\alpha r (n+1)^{r-2}}
\right)\right) .
\end{align*}
Наслідок~\ref{cor01} доведено.
\end{proof}

Формулу (\ref{f41}) із залишковим членом, записаним дещо в іншому вигляді було отримано в \cite{Serdyuk2005} і \cite{Serdyuk2005Lp}. При цьому оцінки з  \cite{Serdyuk2005} і \cite{Serdyuk2005Lp} містять більш точні оцінки залишкового члена ніж у (\ref{f41}).
%The proof of Corollary~\ref{cor1} follows from Theorem~\ref{theorem1} and from proof of Theorem 4 \cite{Serdyuk2005}.

%The asymptotic equality (\ref{f12}) was proved in \cite{Serdyuk2005}.

%For every fixed $q\in(0,1)$ we denote by $\mathcal{D}_{q}$ the set of all sequences $\psi(k)$, $k\in\mathbb{N}$, such that
%$$
%\lim\limits_{k\rightarrow 0}\frac{\psi(k)}{\psi(k+1)}=q.
%$$

Нехай далі $q\in (0,1)$. Згідно з Теоремою 3 роботи \cite{Stepanets_Serdyuk_Shydlich}, твердження про існування послідовності  $\psi\in \mathcal{D}_{q}$, $q\in (0,1)$ такої, що для функції $f$  вірне  включення $C^{\alpha,1}_{\beta}L_{1}$ при будь-якому $\beta\in \mathbb{R}$ еквівалентне твердженню про включення $f\in \mathcal{A}$, де $\mathcal{A}$ --- множина всіх $2\pi$--періодичних дійснозначних на дійсній осі функцій, які допускають аналітичне продовження на деяку смугу $\left| \mathrm{Im} z \right| <c$, $c>0$, комплексної площини.
Отже класи $C^{\psi}_{\beta,1}$, $\psi\in \mathcal{D}_{q}$, $0<q<1$, складаються з періодичних, аналітичних у смузі $\left| \mathrm{Im} \, z \right| <c$ функцій, при цьому $c= \ln \frac{1}{q}$ (див., наприклад, \cite[c. 32]{Step monog 1987}).

Послідовності $\psi(k)=e^{-\alpha k}$, $\alpha>0$ належать до множини $\mathcal{D}_{q}$ при $q=e^{-\alpha}$, а відповідні класи $C^{\psi}_{\beta,1}=C^{\alpha,1}_{\beta,1}$ породжуються ядрами Пуассона
\begin{equation}\label{kernelPsiDq}
P_{\alpha,1,\beta}(t)=\sum\limits_{k=1}^{\infty}e^{-\alpha k}\cos
\big(kt-\frac{\beta\pi}{2}\big), \ \alpha>0, \  \beta\in
    \mathbb{R}.
\end{equation}
 
Із Теореми~\ref{theorem1} для класів $C^{\alpha,1}_{\beta,1}$  отримуємо наступне твердження.

%The classes $C^{\psi}_{\beta,1}$, generated by kernels (\ref{kernelPsiDq}) we denote by  $C^{q}_{\beta,1}$.

\begin{corollary}\label{cor2}
Нехай $\alpha>0$ і $\beta\in \mathbb{R}$.
 Тоді, при всіх  $n\in\mathbb{N}$ має місце рівність
\begin{equation}\label{f31}
{\cal E}_{n}(C^{\alpha,1}_{\beta,1})_{C}=e^{-\alpha n}\left(
\frac{1}{\pi}\frac{1}{1-e^{-\alpha}}
+
\frac{\Theta}{n}\frac{e^{-\alpha}}{(1-e^{-\alpha})^{2}}\right),
\end{equation}
де  для величини $\Theta=\Theta(n,\alpha,\beta)$ виконуються нерівності $-1\leq \Theta\leq 0$.
\end{corollary}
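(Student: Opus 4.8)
The plan is to obtain \eqref{f31} as a direct specialization of Theorem~\ref{theorem1} to the sequence $\psi(k)=e^{-\alpha k}$, which, as noted just above the statement, generates the Poisson kernel $P_{\alpha,1,\beta}$ and the class $C^{\alpha,1}_{\beta,1}$. First I would check the hypotheses: $\psi(k)=e^{-\alpha k}>0$ for every $\alpha>0$, and $\sum_{k=1}^{\infty}k\,e^{-\alpha k}<\infty$, so Theorem~\ref{theorem1} applies for all $n\in\mathbb{N}$ and all $\beta\in\mathbb{R}$ and gives
\begin{equation*}
{\cal E}_{n}(C^{\alpha,1}_{\beta,1})_{C}=\frac{1}{\pi}\sum_{k=n}^{\infty}e^{-\alpha k}+\frac{\Theta_{2}}{n}\sum_{k=1}^{\infty}k\,e^{-\alpha(k+n)},\qquad -1\le\Theta_{2}\le0 .
\end{equation*}

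The next step is to evaluate the two series in closed form. The first is the geometric tail $\sum_{k=n}^{\infty}e^{-\alpha k}=e^{-\alpha n}/(1-e^{-\alpha})$. For the second, pulling the factor $e^{-\alpha n}$ out of the summation and applying the standard identity $\sum_{k=1}^{\infty}k x^{k}=x/(1-x)^{2}$ with $x=e^{-\alpha}\in(0,1)$ yields $\sum_{k=1}^{\infty}k\,e^{-\alpha(k+n)}=e^{-\alpha n}\,e^{-\alpha}/(1-e^{-\alpha})^{2}$.

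Substituting both evaluations into the displayed equality and factoring $e^{-\alpha n}$ out of the right-hand side produces precisely \eqref{f31} with $\Theta:=\Theta_{2}=\Theta_{2}(n,\alpha,\beta)$, so the two-sided bound $-1\le\Theta\le0$ is inherited unchanged from Theorem~\ref{theorem1}; the bounds on $\Theta$ are preserved because the substitution only multiplies the two nonnegative series by the positive scalar $e^{-\alpha n}$. There is no genuine obstacle in this argument: the only points requiring a little care are recording the two closed forms correctly and stating the dependence of $\Theta$ on the parameters. I would also note that \eqref{f31} recovers and sharpens the known asymptotic \eqref{Serdyuk_Asymp_r=1}, in which the corresponding factor was only identified as $\mathcal{O}(1)$.
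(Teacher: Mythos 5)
Your proposal is correct and follows essentially the same route as the paper: specialize Theorem~\ref{theorem1} to $\psi(k)=e^{-\alpha k}$ and evaluate the two geometric-type series in closed form, with $\Theta$ inherited directly from $\Theta_{2}$. The only cosmetic difference is that the paper reaches $\sum_{k=1}^{\infty}k\,e^{-\alpha(k+n)}=e^{-\alpha(n+1)}/(1-e^{-\alpha})^{2}$ via the tail-sum identity $\sum_{k=n}^{\infty}kq^{k}=\bigl(nq^{n}(1-q)+q^{n+1}\bigr)/(1-q)^{2}$ rather than by pulling out $e^{-\alpha n}$ first, and both computations give the same result.
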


\begin{proof}
Покладемо $q=e^{-\alpha}$. Тоді, 
з Теореми~\ref{theorem1} випливає, що при всіх $n\in\mathbb{N}$
\begin{align}\label{f32}
{\cal E}_{n}(C^{\alpha,1}_{\beta,1})_{C}=&
\frac{1}{\pi}
\sum\limits_{k=n}^{\infty}q^{k}+
 \frac{\Theta}{n} \sum\limits_{k=0}^{\infty}kq^{k+n} \notag \\
 =&\frac{1}{\pi}\frac{q^{n}}{1-q}
+
\frac{\Theta}{n}\left( \sum\limits_{k=n}^{\infty}kq^{k}-n\sum\limits_{k=n}^{\infty}q^{k}  \right) \notag \\
=&
\frac{1}{\pi}\frac{q^{n}}{1-q}
+
\frac{\Theta}{n}\left( \frac{nq^{n}(1-q)+q^{n+1}}{(1-q)^{2}}-\frac{nq^{n}}{1-q} \right) \notag \\
=&
\frac{1}{\pi}\frac{q^{n}}{1-q}
+
\frac{\Theta}{n}\frac{q^{n+1}}{(1-q)^{2}},
 \end{align}
 де була  використана наступна рівність:
 \begin{equation*}
 \sum\limits_{k=n}^{\infty}kq^{k}=\frac{nq^{n}(1-q)+q^{n+1}}{(1-q)^{2}}, \ \ q\in (0,1), \ \ n\in\mathbb{N}.
 \end{equation*}
 Наслідок~\ref{cor2} доведено.
\end{proof}

Оцінка (\ref{f31}) уточнює асимптотичні рівності для величин ${\cal E}_{n}(C^{\alpha,r}_{\beta,1})_{C}$, які були встановлені в  \cite{Serdyuk2005} і \cite{Serdyuk2005Lp}. Асимптотичні рівності для величин  ${\cal E}_{n}(C^{\psi}_{\beta,1})_{C}$ при $\psi\in \mathcal{D}_{q}$, $q\in(0,1)$, містяться у наступному твердженні.

\begin{corollary}\label{corDq}
Нехай $\psi\in \mathcal{D}_{q}$, $q\in(0,1)$,  $\beta\in\mathbb{R}$, $n\in \mathbb{N}$. Тоді при  всіх номерах $n$ таких, що
\begin{equation}\label{Number_n_Dq}
\frac{1}{n}+\varepsilon_{n}<\frac{1-q}{2},
\end{equation}
де
\begin{equation}\label{Varepselon_n_Dq}
\varepsilon_{n}:=\sup\limits_{k\geq n} \left| \frac{\psi(k+1)}{\psi(k)}-q\right|,
\end{equation}
має місце рівномірна відносно всіх розглядуваних параметрів оцінка
\begin{equation}\label{Estimate_corDq}
{\cal E}_{n}(C^{\psi}_{\beta,1})_{C}= 
\psi(n) \left(
\frac{1}{\pi(1-q)}+\mathcal{O}(1)\Big(\frac{q}{n(1-q)^{2}}+\frac{\varepsilon_{n}}{(1-q)^{2}}\Big)\right).
\end{equation}
\end{corollary}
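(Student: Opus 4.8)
The plan is to read off \eqref{Estimate_corDq} directly from the exact identity \eqref{Theorem1Asymp} of Theorem~\ref{theorem1}, so that the whole task reduces to estimating the two quantities $S:=\sum_{k=n}^{\infty}\psi(k)$ and $\frac1n\sum_{k=1}^{\infty}k\psi(k+n)$ under the hypothesis $\psi\in\mathcal D_q$. First I would record the elementary consequences of \eqref{Varepselon_n_Dq}: for every $k\ge n$ we have $q-\varepsilon_n\le \psi(k+1)/\psi(k)\le q+\varepsilon_n=:\tilde q$, and condition \eqref{Number_n_Dq} forces $\varepsilon_n<\tfrac{1-q}{2}$ (since $\tfrac1n>0$), hence $\tilde q<\tfrac{1+q}{2}$, so $1-\tilde q>\tfrac{1-q}{2}>0$ and $\tfrac{1}{1-\tilde q}<\tfrac{2}{1-q}$. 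In particular the ratios are eventually bounded by a constant $<1$, which in turn gives $\sum_{k}k\psi(k)<\infty$, so Theorem~\ref{theorem1} is applicable; moreover $\varepsilon_n\to0$ by \eqref{DalamberCondition}, so \eqref{Number_n_Dq} does hold for all large $n$, as claimed in the statement.

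For the first quantity, write $\psi(k+1)/\psi(k)=q+\theta_k$ with $|\theta_k|\le\varepsilon_n$ for $k\ge n$. Splitting off the first term and shifting the summation index,
$$S-\psi(n)=\sum_{k=n}^{\infty}\psi(k+1)=\sum_{k=n}^{\infty}(q+\theta_k)\psi(k)=qS+\sum_{k=n}^{\infty}\theta_k\psi(k),$$
whence $(1-q)S=\psi(n)+\sum_{k=n}^{\infty}\theta_k\psi(k)$. Since $\psi(n+j)\le\psi(n)\tilde q^{\,j}$ by telescoping the ratio bound, we get $S\le\psi(n)/(1-\tilde q)$, and therefore
$$\Bigl|S-\frac{\psi(n)}{1-q}\Bigr|\le\frac{\varepsilon_n S}{1-q}\le\frac{\varepsilon_n}{1-q}\cdot\frac{\psi(n)}{1-\tilde q}\le\frac{2\varepsilon_n\psi(n)}{(1-q)^2}.$$
Thus $\tfrac1\pi S=\tfrac{\psi(n)}{\pi(1-q)}+\mathcal O(1)\tfrac{\varepsilon_n\psi(n)}{(1-q)^2}$.

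For the second quantity I would use the same pointwise bound $\psi(n+k)\le\psi(n)\tilde q^{\,k}$, so that $\sum_{k=1}^{\infty}k\psi(n+k)\le\psi(n)\sum_{k=1}^{\infty}k\tilde q^{\,k}=\psi(n)\tfrac{\tilde q}{(1-\tilde q)^2}\le\tfrac{4(q+\varepsilon_n)}{(1-q)^2}\psi(n)$; dividing by $n$ and using $\tfrac1n\le1$ to absorb the $\varepsilon_n$-part, $\tfrac1n\sum_{k=1}^{\infty}k\psi(n+k)=\mathcal O(1)\psi(n)\bigl(\tfrac{q}{n(1-q)^2}+\tfrac{\varepsilon_n}{(1-q)^2}\bigr)$. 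Because $-1\le\Theta_2\le0$ in \eqref{Theorem1Asymp}, adding the two bounds yields precisely \eqref{Estimate_corDq}. There is no real obstacle here: the only thing to watch is the bookkeeping that keeps $1-\tilde q$ bounded below by $\tfrac{1-q}{2}$ — which is exactly what \eqref{Number_n_Dq} is designed to ensure — and checking that the absorbed constants in $\mathcal O(1)$ are absolute, independent of $n$, $q$, $\beta$ and $\psi$.
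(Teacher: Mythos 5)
Your argument is correct, and it reaches \eqref{Estimate_corDq} by a genuinely more self-contained route than the paper. The paper also starts from \eqref{Theorem1Asymp}, but it imports Lemma~1 of \cite{StepanetsSerdyuk2000No3} to represent $\sum_{k=n}^{\infty}\psi(k)$ as $\psi(n)\bigl(q^{-n}\sum_{k\ge n}q^{k}+r_{n}\bigr)$ with $|r_{n}|\le 2\varepsilon_{n}/(1-q)^{2}$, and then applies that same lemma a second time to the sequence $k\psi(k)$ (which also lies in $\mathcal D_{q}$); this second application is what forces the paper to introduce $\varepsilon_{n+1}^{*}=\sup_{k\ge n+1}\bigl|\tfrac{(k+1)\psi(k+1)}{k\psi(k)}-q\bigr|$, to check $\varepsilon_{n+1}^{*}\le\varepsilon_{n}+\tfrac1n$, and hence to impose the full condition \eqref{Number_n_Dq} with its $\tfrac1n$ term. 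You instead (i) reprove the first representation directly via the telescoping identity $(1-q)S=\psi(n)+\sum_{k\ge n}\theta_{k}\psi(k)$, which is exactly the content of the cited lemma, and (ii) handle $\tfrac1n\sum_{k\ge1}k\psi(k+n)$ by the crude geometric majorization $\psi(n+k)\le\psi(n)\tilde q^{\,k}$, which suffices because that term enters \eqref{Theorem1Asymp} only through the bounded factor $\Theta_{2}\in[-1,0]$, so only an upper bound on its magnitude is needed. This buys you a shorter proof, absolute constants that are visibly independent of all parameters, and in fact only the weaker hypothesis $\varepsilon_{n}<\tfrac{1-q}{2}$ rather than all of \eqref{Number_n_Dq}; what you lose relative to the paper is the two-sided asymptotics of $\sum_{k>n}k\psi(k)$, which the paper computes but does not ultimately need for this corollary.
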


\begin{proof} 
З Леми 1 роботи \cite{StepanetsSerdyuk2000No3}  випливає, що при $\psi\in \mathcal{D}_{q}$, $0<q<1$, $n\in \mathbb{N}$, має місце рівність 
\begin{equation}\label{Sum_psi_Dq}
\sum\limits_{k=n}^{\infty}\psi(k)=
\psi(n) \left(\frac{1}{q^{n}}\sum\limits_{k=n}^{\infty}q^{k} +r_{n}\right),
\end{equation}
де для залишку $r_{n}$ при всіх номерах $n$ таких, що
\begin{equation}\label{Proof_Number_n_Dq}
\varepsilon_{n}<\frac{1-q}{2}
\end{equation}
виконується оцінка
\begin{equation}\label{Estimate_r_n_corDq}
\left| r_{n}\right|
\leq \frac{\varepsilon_{n}}{(1-q-\varepsilon_{n}) (1-q)}\leq
\frac{2\varepsilon_{n}}{ (1-q)^{2}} .
\end{equation}
Очевидно, що якщо $\psi\in \mathcal{D}_{q}$, $0<q<1$, то і послідовність $k\psi(k)$ також задовольняє умову $\mathcal{D}_{q}$, а тому знову ж таки в силу Леми~1 із \cite{StepanetsSerdyuk2000No3} 
\begin{equation}\label{Sum_kpsi_Dq}
\sum\limits_{k=n+1}^{\infty} k\psi(k)=
(n+1)\psi(n+1) \left(\frac{1}{q^{n+1}}\sum\limits_{k=n+1}^{\infty}q^{k} +r_{n+1}^{*}\right),
\end{equation}
 де для залишку $r_{n+1}^{*}$ при усіх номерах $n$ таких, що
 \begin{equation}\label{Varepselon_Star_n_Dq}
\varepsilon_{n+1}^{*}:=\sup\limits_{k\geq n+1} \left| \frac{\psi(k+1)(k+1)}{\psi(k)k}-q\right|< \frac{1-q}{2}
\end{equation}
виконується оцінка 
\begin{equation}\label{Estimate_r_n_Star_corDq}
\left| r_{n+1}^{*}\right|
\leq
\frac{2\varepsilon_{n+1}^{*}}{ (1-q)^{2}} .
\end{equation}
Із означень величин $\varepsilon_{n}$ i $\varepsilon_{n+1}^{*}$ (див. \eqref{Varepselon_n_Dq} i \eqref{Varepselon_Star_n_Dq}) маємо
\begin{equation}\label{Estimate_Varepselon_Star_n_Dq}
\varepsilon_{n+1}^{*} \leq \sup\limits_{k\geq n+1} \left| \frac{\psi(k+1)}{\psi(k)}-q\right|+\frac{1}{n+1}= \varepsilon_{n+1}+\frac{1}{n+1}< \varepsilon_{n}+\frac{1}{n}.
\end{equation}
Із \eqref{Estimate_Varepselon_Star_n_Dq} видно, що виконання нерівності \eqref{Number_n_Dq} гарантує і виконання нерівностей \eqref{Proof_Number_n_Dq} i \eqref{Varepselon_Star_n_Dq}, а отже і оцінок \eqref{Estimate_r_n_corDq} i \eqref{Estimate_r_n_Star_corDq} для залишків у рівностях \eqref{Sum_psi_Dq} i \eqref{Sum_kpsi_Dq}.

Тоді в силу оцінки \eqref{Theorem1Asymp} Теореми~\ref{theorem1} і рівностей  \eqref{Sum_psi_Dq} i \eqref{Sum_kpsi_Dq} випливає, що при всіх номерах $n$, які задовольняють умову \eqref{Number_n_Dq}, справджуються співвідношення
\begin{align}\label{Formulas_Corrolary}
{\cal E}_{n}(C^{\psi}_{\beta,1})_{C}= &
\frac{1}{\pi}\sum\limits_{k=n}^{\infty}\psi(k) +
\frac{\mathcal{O}(1)}{n}\sum\limits_{k=1}^{\infty} k\psi(k+n) \notag \\
=&
\frac{\psi(n)}{\pi}\left( \frac{1}{q^{n}} \sum\limits_{k=n}^{\infty} q^{k} +r_{n}\right)+
 \frac{\mathcal{O}(1)}{n}\sum\limits_{k=n+1}^{\infty} (k-n)\psi(k) \notag \\
 =&
\frac{\psi(n)}{\pi}
\left( \frac{1}{1-q} +\mathcal{O}(1)\frac{\varepsilon_{n}}{(1-q)^{2}}
\right)+
\mathcal{O}(1)\left(  \frac{1}{n}\sum\limits_{k=n+1}^{\infty} k\psi(k)
-\sum\limits_{k=n+1}^{\infty} \psi(k)
\right) \notag \\
=& 
\psi(n)
\left( \frac{1}{\pi(1-q)} +\mathcal{O}(1)\frac{\varepsilon_{n}}{(1-q)^{2}}
\right) \notag \\
+ & \mathcal{O}(1) \psi(n+1) \left( \frac{n+1}{n}  \left(\frac{1}{q^{n+1}}\sum\limits_{k=n+1}^{\infty}q^{k}+ \frac{\varepsilon_{n+1}^{*}}{(1-q)^{2}} \right)
-\frac{1}{q^{n+1}} \sum\limits_{k=n+1}^{\infty}q^{k}  + \frac{\varepsilon_{n+1}}{(1-q)^{2}} \right)\notag \\
=& 
\psi(n)
\left( \frac{1}{\pi(1-q)} +\mathcal{O}(1)\frac{\varepsilon_{n}}{(1-q)^{2}}
\right)+
 \mathcal{O}(1) \psi(n+1) \left( \frac{1}{n(1-q)}+ \frac{\varepsilon_{n}+\frac{1}{n}}{(1-q)^{2}}\right)
  \notag \\
  =&
  \psi(n)
\left( \frac{1}{\pi(1-q)} +\mathcal{O}(1)
\left( \frac{\varepsilon_{n}}{(1-q)^{2}} +\frac{\psi(n+1)}{\psi(n)}\frac{1}{n(1-q)^{2}}  \right)
\right)  \notag \\
  =&
  \psi(n)
\left( \frac{1}{\pi(1-q)} +\mathcal{O}(1)
\left( \frac{\varepsilon_{n}}{(1-q)^{2}} + \frac{q}{n(1-q)^{2}}  \right)
\right).
\end{align}
Наслідок \ref{corDq} доведено.
\end{proof}

Асимптотичні рівності \eqref{Estimate_corDq} вперше  були встановлені в роботах \cite{Serdyuk2005} i \cite{Serdyuk2005Lp}.

\section{Наслідки з Теореми~\ref{theorem1} для класів нескінченно диференційовних  функцій }\label{Section_corrolary_infinitelyDifferentiable}

В даному підрозділі будемо вважати, що послідовності $\psi(k)$, що породжують множини $C^{\psi}_{\beta}L_{1}$ та $C^{\psi}_{\beta,1}$, є звуженням на множину натуральних чисел деяких додатних неперервних опуклих донизу функцій  $\psi(t)$ неперервного аргументу $t\geq 1$, що прямують до нуля при $t\rightarrow\infty$.
Множину всіх таких функцій $\psi$ позначають через $\mathfrak{M}$:
\begin{equation}\label{Mathfrak_M}
\mathfrak{M}\!=\! \left\{\psi\!\in \! C[1,\infty)\!: \psi(t)\!>\!0, \psi(t_{1}-2\psi((t_{1}+t_{2})/2) +\psi(t_{2})\geq 0 \ \forall t_{1}, t_{2} \in[1,\infty), \  \lim\limits_{t\rightarrow\infty}\psi(t)\!=\!0 \right\}.
\end{equation}

Наслідуючи О.І. Степанця (див., наприклад, \cite[с. 160]{Stepanets1}), кожній функції $\psi\in\mathfrak{M}$ поставимо у відповідність характеристики
\begin{equation*}
\eta(t)=\eta(\psi;t)=\psi^{-1}\left( \frac{1}{2}\psi(t)\right) 
\end{equation*}
та
\begin{equation*}
\mu(t)=\mu(\psi;t)=\frac{t}{\eta(t)-t},
\end{equation*}
де $\psi^{-1}$ --- обернена до $\psi$ функція, і покладемо
\begin{equation*}
\mathfrak{M}_{\infty}^{+}= \left\{\psi\in \mathfrak{M}: \ \mu(t)\uparrow, \ \ t\rightarrow\infty \right\}.
\end{equation*}

Через $\mathfrak{M}^{\alpha}$ позначимо підмножину всіх функцій 
$\psi\in \mathfrak{M}$, для яких величина
 \begin{equation}\label{psi_alpha}
\alpha(t)=\alpha(\psi;t):=\frac{\psi(t)}{t|\psi'(t)|}, \ \ \psi'(t):=\psi'(t+0),
\end{equation}
спадає до нуля при $t\rightarrow\infty$:
\begin{equation}\label{Mathfrak_M_alpha}
\mathfrak{M}^{\alpha}= \left\{\psi\in \mathfrak{M}:   \ \ \lim\limits_{t\rightarrow\infty}\alpha(\psi;t)=0 \right\}.
\end{equation}

Згідно з Теоремою 2 роботи \cite{Stepanets_Serdyuk_Shydlich2007},  твердження про існування послідовності  $\psi\in \mathfrak{M}^{\alpha}$ (або $\psi\in\mathfrak{M}_{\infty}^{+}$), такої, що для функції $f$ вірне  включення $f\in C^{\psi}_{\beta}L_{1}$  при будь-якому $\beta\in\mathbb{R}$, еквівалентне твердженню про включення $f\in D^{\infty}$, де $D^{\infty}$ --- множина всіх нескінченно диференційовних $2\pi$-періодичних дійснозначних функцій.
А отже, класи $C^{\psi}_{\beta,1}$ при $\psi\in \mathfrak{M}^{\alpha}$ (або $\psi\in\mathfrak{M}_{\infty}^{+}$),  є класами нескінченно диференційовних періодичних функцій.  В тій же роботі було показано, що має місце включення
\begin{equation}\label{Mathfrak_M_alpha}
\mathfrak{M}_{\infty}^{+} \subset \mathfrak{M}^{\alpha}\subset \mathfrak{M}^{\infty}= \left\{\psi\in \mathfrak{M}:  \ \forall r>0 \ \ \lim\limits_{t\rightarrow\infty} t^{r}\psi(t)=0 \right\},
\end{equation}
яке означає, що функції $\psi(t)$ із $\mathfrak{M}^{\alpha}$ спадають до нуля швидше за довільну степеневу функцію.

Для величин  ${\cal E}_{n}(C^{\psi}_{\beta,1})_{C}$,  $\psi\in\mathfrak{M}_{\infty}^{+}$ за умови $\eta(n)-n>2$ відомі точні порядкові рівності
\begin{equation}\label{OrderEstimatesUMJ2014}
{\cal E}_{n}(C^{\psi}_{\beta,1})_{C}
\asymp
\psi(n) (\eta(n)-n),
 \end{equation}
які співпрадають з точними порядковими рівностями для найкращих рівномірних наближень тригонометричними поліномами порядку $n-1$
\begin{equation*}
{ E}_{n}(C^{\psi}_{\beta,1})_{C}
=\inf\limits_{t_{n-1}\in \mathcal{T}} \| f-t_{t_{n-1}}  \|_{C}
 \end{equation*}
а саме, (див., наприклад,  \cite{Serdyuk_Stepaniuk2014})
\begin{equation}\label{OrderEstimatesUMJ2014_2}
{ E}_{n}(C^{\psi}_{\beta,1})_{C}
\asymp
{\cal E}_{n}(C^{\psi}_{\beta,1})_{C}
\asymp
\psi(n) (\eta(n)-n),
 \end{equation}
 (тут і надалі запис $A(n)\asymp B(n)$ для додатних послідовностей $A(n)$ i $B(n)$ означає існування додатних констант $K_{1}$ i $K_{2}$ таких, що 
 $K_{1}B(n) \leq A(n) \leq K_{2} B(n)$, $n\in \mathbb{N}$). 
 
 Як показано в \cite[с. 166]{Stepanets2} для довільної функції $\psi$ із $\mathfrak{M}_{\infty}^{+}$ має місце порядкова рівінсть
\begin{equation}\label{OrderEstimates_theta_and_lambda}
\eta(t)-t \asymp \lambda(t), \ t\geq 1,
 \end{equation}
де $\lambda(t)$ --- характеристика вигляду
\begin{equation}\label{psi_lambda}
\lambda(t)=\lambda(\psi;t):=\frac{\psi(t)}{|\psi'(t)|}.
\end{equation}

З урахуванням \eqref{OrderEstimatesUMJ2014} можна записати у вигляді 
\begin{equation}\label{OrderEstimatesUMJ2014_3}
{\cal E}_{n}(C^{\psi}_{\beta,1})_{C}
\asymp
\psi(n) \lambda(n).
 \end{equation}
 
 Наступне твердження містить сильну асимптотику  величин ${\cal E}_{n}(C^{\psi}_{\beta,1})_{C}$ , $\psi\in\mathfrak{M}^{\alpha}$ при деяких природних обмеженнях на $\alpha(t)$ i $\lambda(t)$.

\begin{theorem}\label{theorem2_M}
Нехай $\beta\in\mathbb{R}$, $\psi\in\mathfrak{M}$ i характеристики \eqref{psi_alpha} i  \eqref{psi_lambda} задовольняють умови
\begin{equation}\label{alphaTo_0}
\alpha(t)\downarrow0,
\end{equation}
\begin{equation}\label{lambdaTo_infty}
\lambda(t)\uparrow \infty, \ \ t\rightarrow\infty.
\end{equation}

Тоді для всіх $n\in \mathbb{N}$ таких, що
\begin{equation}\label{alpha_1/4}
\alpha(n)\leq \frac{1}{4}
\end{equation}
виконується оцінка
\begin{equation}\label{Theorem2Asymp}
{\cal E}_{n}(C^{\psi}_{\beta,1})_{C}=
\psi(n) \lambda(n)
\left( \frac{1}{\pi}
+
\frac{\xi_{1}}{\lambda(n)}+\xi_{2}\alpha(n) 
 \right),
 \end{equation}
 де $-1\leq \xi_{1}\leq 1+\frac{1}{\pi}$ та $\, \, -4\leq \xi_{2}\leq \frac{4}{3}\left(1+\frac{1}{\pi}\right)$.
\end{theorem}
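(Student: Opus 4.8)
The idea is to deduce the theorem from Theorem~\ref{theorem1}: by \eqref{Theorem1Asymp},
\[
{\cal E}_{n}(C^{\psi}_{\beta,1})_{C}=\frac{1}{\pi}\,\Sigma_{1}+\Theta_{2}\,\Sigma_{2},\qquad
\Sigma_{1}:=\sum_{k=n}^{\infty}\psi(k),\quad \Sigma_{2}:=\frac{1}{n}\sum_{k=1}^{\infty}k\psi(k+n),\quad -1\le\Theta_{2}\le0,
\]
so everything reduces to sharp two-sided estimates of $\Sigma_{1}$ and $\Sigma_{2}$ in terms of $\psi(n)$, $\lambda(n)$ and $\alpha(n)$. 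Since $\lambda\uparrow\infty$ and $\alpha\downarrow0$, the correction terms obtained below are genuinely of smaller order than the main term $\tfrac1\pi\psi(n)\lambda(n)$, which is exactly what turns \eqref{Theorem2Asymp} into an asymptotic equality.

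For $\Sigma_{1}$ I would first pass to an integral: $\psi$ being positive and decreasing on $[1,\infty)$, $\int_{n}^{\infty}\psi(t)\,dt\le\Sigma_{1}\le\psi(n)+\int_{n}^{\infty}\psi(t)\,dt$. To evaluate $\int_{n}^{\infty}\psi$, note that the definition \eqref{psi_lambda} of $\lambda$ says precisely $\psi'(t)\lambda(t)=-\psi(t)$, so $\bigl(\psi(t)\lambda(t)\bigr)'=-\psi(t)+\psi(t)\lambda'(t)$; integrating over $[n,\infty)$ and discarding the boundary term at infinity (legitimate because \eqref{alpha_1/4} forces $\psi(t)\le\psi(n)(n/t)^{1/\alpha(n)}\le\psi(n)(n/t)^{4}$ while $\lambda(t)=t\alpha(t)\le\alpha(n)t$) gives $\int_{n}^{\infty}\psi=\psi(n)\lambda(n)+\int_{n}^{\infty}\psi(t)\,d\lambda(t)$. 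Here $d\lambda\ge0$ and $\lambda(t)-\lambda(n)=t\alpha(t)-n\alpha(n)\le\alpha(n)(t-n)$, so integrating by parts and using the identity $\int_{n}^{\infty}(t-n)\,|\psi'(t)|\,dt=\int_{n}^{\infty}\psi(t)\,dt$ yields $0\le\int_{n}^{\infty}\psi\,d\lambda\le\alpha(n)\int_{n}^{\infty}\psi$, hence
\[
\psi(n)\lambda(n)\le\int_{n}^{\infty}\psi(t)\,dt\le\frac{\psi(n)\lambda(n)}{1-\alpha(n)}\le\tfrac{4}{3}\,\psi(n)\lambda(n),
\]
and therefore $\Sigma_{1}=\psi(n)\lambda(n)+\theta_{0}\,\psi(n)+\theta_{1}\,\psi(n)\lambda(n)\alpha(n)$ with $\theta_{0}\in[0,1]$, $\theta_{1}\in[0,\tfrac43]$.

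For $\Sigma_{2}=\tfrac1n\sum_{j=n+1}^{\infty}(j-n)\psi(j)$ I would run the same mechanism one order higher. From $\bigl((t-n)\psi(t)\lambda(t)\bigr)'=\psi\lambda-(t-n)\psi+(t-n)\psi\lambda'$ one gets $\int_{n}^{\infty}(t-n)\psi(t)\,dt=\int_{n}^{\infty}\psi(t)\lambda(t)\,dt+\int_{n}^{\infty}(t-n)\psi(t)\,d\lambda(t)$; bounding $\int_{n}^{\infty}\psi\lambda$ via $\lambda(t)\le\alpha(n)t$ together with $\int_{n}^{\infty}t\psi=\int_{n}^{\infty}(t-n)\psi+n\int_{n}^{\infty}\psi$, and closing the resulting linear inequality (possible since $\alpha(n)\le\tfrac14$), I expect $\psi(n)\lambda(n)^{2}\le\int_{n}^{\infty}(t-n)\psi(t)\,dt\le C\,\psi(n)\lambda(n)^{2}$ with an explicit absolute $C$. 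It then remains to compare the sum with the integral: the function $f(t)=(t-n)\psi(t)$ vanishes at $t=n$ and is unimodal on $[n,\infty)$, since its derivative equals $\psi(t)\bigl(1-(t-n)/\lambda(t)\bigr)$ and $(t-n)/\lambda(t)=(1-n/t)/\alpha(t)$ is increasing, while its maximum is at most $\psi(n)\lambda(n)/(1-\alpha(n))\le\tfrac43\psi(n)\lambda(n)$; hence $\bigl|\sum_{j>n}f(j)-\int_{n}^{\infty}f\bigr|$ is bounded by a fixed multiple of $\int_{n}^{\infty}f$ plus $\max f$, and dividing by $n$ (recall $\lambda(n)/n=\alpha(n)$) gives $\Sigma_{2}=\theta_{2}\,\psi(n)\lambda(n)\alpha(n)+O\!\bigl(\psi(n)\alpha(n)\bigr)$ with $\theta_{2}\ge0$ bounded by an explicit constant.

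Substituting these two evaluations into ${\cal E}_{n}=\tfrac1\pi\Sigma_{1}+\Theta_{2}\Sigma_{2}$ and sorting the remainder into the $\psi(n)$-group and the $\psi(n)\lambda(n)\alpha(n)$-group produces \eqref{Theorem2Asymp}: the positive parts of $\xi_{1}$ and $\xi_{2}$ come from the nonnegative excesses $\Sigma_{1}-\int_{n}^{\infty}\psi\ge0$ and $\int_{n}^{\infty}\psi-\psi(n)\lambda(n)\ge0$ (plus the nonnegative sum--integral slack), while the negative parts come from $\Theta_{2}\Sigma_{2}\le0$; a careful accounting of the constants above pins down the stated ranges $-1\le\xi_{1}\le1+\tfrac1\pi$ and $-4\le\xi_{2}\le\tfrac43(1+\tfrac1\pi)$. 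The difficulty is purely quantitative rather than conceptual: making the two-sided estimates of $\int_{n}^{\infty}\psi$ and $\int_{n}^{\infty}(t-n)\psi$ precise enough (with small enough explicit constants) to land exactly in those intervals, and rigorously justifying the Stieltjes integrations by parts and the vanishing of all boundary terms using only $\psi\in\mathfrak{M}$ together with \eqref{alphaTo_0}, \eqref{lambdaTo_infty} and \eqref{alpha_1/4} --- in particular keeping in mind that $\psi'$ is merely the right-hand derivative of a convex function.
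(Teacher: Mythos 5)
Your proposal is correct and takes essentially the same route as the paper: the same reduction via \eqref{Theorem1Asymp}, the same sum--integral comparison, and the same self-closing integral estimates driven by $\alpha(n)\le\tfrac14$ (the paper's Lemma~\ref{Lemma_Estimate_Integral_psi} is exactly your two-sided bound $\lambda(n)\psi(n)\le\int_{n}^{\infty}\psi\le\lambda(n)\psi(n)/(1-\alpha(n))$). The only organizational differences are that the paper estimates $\sum_{k\ge n}k\psi(k)$ and $\sum_{k\ge n}\psi(k)$ separately instead of treating $(t-n)\psi(t)$ directly, and that it carries out explicitly, via parameters $\Theta_{4},\dots,\Theta_{7}$, the constant bookkeeping you defer to ``a careful accounting.''
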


\begin{proof}[Доведення Теореми~\ref{theorem2_M}] 
Для оцінки величини ${\cal E}_{n}(C^{\psi}_{\beta,1})_{C}$  використаємо формулу \eqref{Theorem1Asymp} із Теореми~\ref{theorem1}. При цьому нам буде необхідно знайти оцінки рядів  $\Sigma_{1}=\sum\limits_{k=n}^{\infty} \psi(k)$ та $\Sigma_{2}=\sum\limits_{k=n}^{\infty} k \psi(k)$.

В силу монотонного спадання функції $\psi\in\mathfrak{M}$ бачимо, що
\begin{equation}\label{int_psi_monot}
\int\limits_{n}^{\infty} \psi(t)dt \leq \sum\limits_{k=n}^{\infty}\psi(k) \leq
\psi(n)+ \int\limits_{n}^{\infty} \psi(t)dt,
\end{equation}
а, отже,
\begin{equation}\label{Sum_psi_Int}
 \sum\limits_{k=n}^{\infty}\psi(k)=  \int\limits_{n}^{\infty} \psi(t)dt + \Theta_{4} \psi(n), \ \ 0\leq \Theta_{4}\leq 1.
\end{equation}

Оцінка інтеграла  $\int\limits_{n}^{\infty}\psi(t) dt$ випливає з наступного твердження, яке може мати і самостійний інтерес.

\begin{lemma}\label{Lemma_Estimate_Integral_psi}
Нехай $\psi\in \mathfrak{M}$, $\lambda(t)$ монотонно неспадає, а $\alpha(t)$ монотонно незростає на $[1,\infty)$. Тоді при всіх $a\geq 1$, таких, що $\alpha(a)<1$, виконуються оцінки
\begin{equation}\label{Lemma_Estimate_Integral_psi_Inequality}
\lambda(a) \psi(a)\leq  \int\limits_{a}^{\infty}\psi(t) dt \leq \lambda(a) \psi(a) \left(1+\frac{\alpha(a)}{1-\alpha(a)}\right).
\end{equation}
\end{lemma}
\begin{proof}[Доведення Леми~\ref{Lemma_Estimate_Integral_psi}] Оскільки в силу включення $\psi\in \mathfrak{M}$, функція $\psi(t)$ є локально абсолютно неперервною на $[1,\infty)$, то, враховуючи монотонне неспадання $\lambda(t)$, одержуємо шукану оцінку знизу
\begin{align}\label{Int_psi_EstimateBelow}
I_{1}:= \int\limits_{a}^{\infty}\psi(t) dt=
 \int\limits_{a}^{\infty}-\psi'(t)\lambda(t) dt\geq
 \lambda(a)\int\limits_{a}^{\infty} (-\psi'(t)) dt =
  \psi(a)\lambda(a),
\end{align}

З іншого боку, враховуючи монотонне незростання функції $\alpha(t)$, і застосовуючи метод інтегрування частинами, маємо
\begin{align}\label{Int_psi_Estimate}
I_{1}=& \int\limits_{a}^{\infty}\psi(t) dt=
 \int\limits_{a}^{\infty} \alpha(t)(-\psi'(t)t) dt\leq
\alpha(a) \int\limits_{a}^{\infty} (-\psi'(t)t)  \notag \\
= &
\alpha(a)\left(
 \psi(a)a+  \int\limits_{a}^{\infty}\psi(t) dt 
\right)=
  \psi(a)\lambda(a)+\alpha(a) I_{1}.
\end{align}

З \eqref{Int_psi_Estimate} одержуємо, що
\begin{equation}\label{Int_psi_EstimateUpper}
I_{1}\leq
 \frac{\lambda(a) \psi(a)}{1-\alpha(a)}=
 \lambda(a)\psi(a) \left(1+\frac{\alpha(a)}{1-\alpha(a)} \right).
 \end{equation}
Із \eqref{Int_psi_EstimateBelow} і \eqref{Int_psi_EstimateUpper} випливає \eqref{Lemma_Estimate_Integral_psi_Inequality}. Лему~\ref{Lemma_Estimate_Integral_psi} доведено.

\end{proof}

Застосування Леми~\ref{Lemma_Estimate_Integral_psi} при $a=n$, $n\in\mathbb{N}$, за умови \eqref{alpha_1/4} дозволяє записати, що
\begin{align}\label{Int_psi_EstimateAsymp}
I_{1}= \int\limits_{a}^{\infty}\psi(t) dt=
 \psi(n)\lambda(n) \left( 1+ \Theta_{5}\alpha(n) \right), \ \ 0\leq \Theta_{5} \leq \frac{4}{3}.
\end{align}

Отже, з урахуванням \eqref{Sum_psi_Int} i \eqref{Int_psi_EstimateAsymp} при $\alpha(n) \leq \frac{1}{4}$
\begin{equation}\label{Sum_psi(k)}
 \sum\limits_{k=n}^{\infty}\psi(k)= \psi(n)\lambda(n) \left( 1+ \frac{\Theta_{4}}{\lambda(n)}+ \Theta_{5}\alpha(n) \right), \ \ 0\leq \Theta_{5} \leq \frac{4}{3}, \ 0\leq \Theta_{4}\leq 1.
\end{equation}

Далі знайдемо оцінку для $\Sigma_{2}=\sum\limits_{k=n}^{\infty}k\psi(k)$.
В силу \eqref{alpha_1/4} функція $t\psi(t)$ спадає на $[n,\infty)$, а тому
\begin{equation}\label{Sum_k_psi_Formula}
\int\limits_{n}^{\infty} t\psi(t) dt \leq \sum\limits_{k=n}^{\infty}k\psi(k) \leq n\psi(n)+ \int\limits_{n}^{\infty}t\psi(t) dt,
\end{equation}
і, отже,
\begin{equation}\label{Sum_k_psi_Formula1}
\sum\limits_{k=n}^{\infty}k\psi(k)=\int\limits_{n}^{\infty} t\psi(t) dt + 
\Theta_{6} n\psi(n), \ \ 0\leq \Theta_{6}\leq 1.
\end{equation}

Для оцінки інтеграла $I_{2}=\int\limits_{n}^{\infty}t\psi(t) dt$ знову використаємо метод інтегрування частинами і врахуємо  \eqref{Mathfrak_M_alpha} та умову  незростання $\alpha(n)$

 \begin{align}\label{Int2_Formula}
I_{2}=& \int\limits_{n}^{\infty}t\psi(t) dt=
  \int\limits_{n}^{\infty}t^{2}  \frac{\psi(t)}{-t\psi'(t)} (-\psi'(t))dt\leq
 \alpha(n) \int\limits_{n}^{\infty}t^{2}(-\psi'(t))dt
 \notag \\
=&\alpha(n) \left(n^{2}\psi(n)+2I_{2} \right),
\end{align}

З останніх співвідношень і умови \eqref{alpha_1/4} маємо
  \begin{equation*}
I_{2}\left(1-2\alpha(n) \right)\leq \alpha(n)n^{2}\psi(n)
 \end{equation*}
 і, отже, з урахуванням умови  \eqref{alpha_1/4} маємо
\begin{align}\label{Int2_Formula_estimate}
I_{2}\leq& \psi(n) n^{2} \alpha(n)\frac{1}{1-2\alpha(n)}
=\psi(n) n^{2} \alpha(n)\left(1+ \frac{2\alpha(n)}{1-2\alpha(n)}\right)
 \notag \\
\leq & \psi(n) n^{2} \alpha(n)\left(1+4\alpha(n) \right)
=\psi(n) n \lambda(n) (1+4\alpha(n)).
\end{align}

З іншого боку, з урахуванням умови \eqref{Int_psi_EstimateAsymp},
\begin{align}\label{Int2_FormulaBelow}
I_{2}= \int\limits_{n}^{\infty}t\psi(t) dt \geq
 n \int\limits_{n}^{\infty}\psi(t) dt 
  \geq \psi(n) n \lambda(n).
\end{align}

Об'єднання \eqref{Int2_Formula_estimate} i \eqref{Int2_FormulaBelow} дозволяє записати, що при  $\alpha(n) \leq \frac{1}{4}$
\begin{align}\label{Int2_t_psi}
 \int\limits_{n}^{\infty}t\psi(t) dt= \psi(n) n \lambda(n) \left(1+\Theta_{7}\alpha(n) \right),
 \ \ \ 0\leq \Theta_{7}\leq 4.
\end{align}

Із формул \eqref{Sum_k_psi_Formula1} i \eqref{Int2_t_psi} випливає, що за умов \eqref{psi_lambda} i \eqref{alpha_1/4}
\begin{align}\label{Sum_k_psi}
\sum\limits_{k=n}^{\infty} k\psi(k)= \psi(n) n \lambda(n) \left(1+\Theta_{7}\alpha(n)+
\frac{\Theta_{6}}{\lambda(n)} \right),
 \ \ \ 0\leq \Theta_{7}\leq 4, \ \ 0\leq \Theta_{6}\leq 1.
\end{align}

Користуючись оцінками \eqref{Sum_k_psi_Formula} i \eqref{Sum_psi(k)}, одержуємо
\begin{align}\label{form6}
&\frac{1}{n}\sum\limits_{k=1}^{\infty}k\psi(k+n)
=\frac{1}{n}\sum\limits_{k=0}^{\infty}k\psi(k+n)
\notag\\
=&\frac{1}{n} \left(\sum\limits_{k=n}^{\infty}k\psi(k)
-n\sum\limits_{k=n}^{\infty}\psi(k)
\right) \notag\\
=&\frac{1}{n} \sum\limits_{k=n}^{\infty}k\psi(k)
-\sum\limits_{k=n}^{\infty}\psi(k) \notag \\
=
&\psi(n) \lambda(n) \left(1+\Theta_{7}\alpha(n)+
\frac{\Theta_{6}}{\lambda(n)} \right)
-
 \psi(n) n \lambda(n) \left(1+\Theta_{5}\alpha(n)+
\frac{\Theta_{4}}{\lambda(n)} \right)\notag \\
 =& 
 \psi(n)  \lambda(n) \left((\Theta_{7}-\Theta_{5})\alpha(n)+
\frac{\Theta_{6}-\Theta_{4}}{\lambda(n)} \right).
 \end{align}

На підставі формули \eqref{Theorem1Asymp}  із Теореми~\ref{theorem1} та оцінок \eqref{Sum_psi(k)} та \eqref{form6}, отримуємо, що для всіх номерів $n$ таких, що виконується нерівність \eqref{alpha_1/4}  
\begin{align}\label{formTheor1}
{\cal E}_{n}(C^{\psi}_{\beta,1})_{C}=&
\frac{1}{\pi}\sum\limits_{k=n}^{\infty}\psi(k)
+
\Theta_{2}
\frac{1}{n}\sum\limits_{k=1}^{\infty}k\psi(k+n) \notag \\
=& 
 \frac{1}{\pi}\psi(n) \lambda(n) \left(1+\frac{\Theta_{4}}{\lambda(n)}+  \Theta_{5}\alpha(n)
 \right) 
 \!+\!\Theta_{2}\psi(n)  \lambda(n) \left(1+\frac{\Theta_{6}-\Theta_{4}}{\lambda(n)}+  (\Theta_{7}-\Theta_{5})\alpha(n)
 \right)
 \notag \\
=&
\psi(n)\lambda(n) \left( \frac{1}{\pi}+   \frac{\Theta_{4}/\pi +\Theta_{2}(\Theta_{6}-\Theta_{4})}{\lambda(n)} + \left(\frac{\Theta_{5}}{\pi}+ \Theta_{2}(\Theta_{7}-\Theta_{5})\ \right)\alpha(n) \right).
\end{align}

Оскільки для величини $\xi_{1}=\frac{\Theta_{4}}{\pi} +\Theta_{2}(\Theta_{6}-\Theta_{4})$ виконується оцінка
\begin{equation*}
-1\leq \xi_{1}\leq 1+\frac{1}{\pi},
 \end{equation*}
 а для  $\xi_{2}=\frac{\Theta_{5}}{\pi} +\Theta_{2}(\Theta_{7}-\Theta_{5})$ --- оцінка
\begin{equation*}
-4\leq \xi_{2}\leq \frac{4}{3}\left( 1+\frac{1}{\pi}\right),
 \end{equation*}
 то із \eqref{formTheor1} випливає \eqref{Theorem2Asymp}.
 Теорему~\ref{theorem2_M} доведено.
\end{proof}

Наведемо наслідок з Теореми~\ref{theorem2_M} у випадку, коли $\psi(t)=e^{-\alpha t^{-r}}$,  $\alpha>0$, $0<r\leq 1$, тобто коли класи $C^{\psi}_{\beta,1} $ є класами узагальнених інтегралів Пуассона 
$C^{\alpha,r}_{\beta,1}$. Легко переконатись, що для вказаних $\psi(t)$ при всіх $t\geq 1$,
\begin{equation}\label{lambda_corrolary_generPoisson}
\lambda(t)=\frac{t^{1-r}}{\alpha r}, \ \ \alpha(t)=\frac{1}{\alpha r t^{r}}.
\end{equation}

Із \eqref{lambda_corrolary_generPoisson} видно, що  умови  \eqref{alphaTo_0} i \eqref{lambdaTo_infty} Теореми~\ref{theorem2_M} виконуються. 
При цьому
виконання нерівності $\frac{1}{\alpha r n^{r}}\leq \frac{1}{4}$ рівносильне виконанню  умови \eqref{alpha_1/4}. Отже, з Теореми~\ref{theorem2_M} випливає наступне твердження.

\begin{corollary}\label{cor00}
Нехай $0<r<1$, $\alpha>0$, $\beta\in\mathbb{R}$, $n \in \mathbb{N}$. Тоді при всіх  $n\geq \left(\frac{4}{\alpha r} \right)^{\frac{1}{r}}$ справедлива рівномірно обмежена по  всіх розглядуваних параметрах оцінка
\begin{equation}\label{f112}
{\cal E}_{n}(C^{\alpha,r}_{\beta,1})_{C}= 
e^{-\alpha n^{r}}   n^{1-r} \Big(
\frac{1}{\pi\alpha r }+\mathcal{O}(1)\Big(\frac{1}{(\alpha r)^{2}}\frac{1}{n^{r}}+\frac{1}{n^{1-r}}\Big)\Big).
\end{equation}
\end{corollary}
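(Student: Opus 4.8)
The plan is to read off Corollary~\ref{cor00} as a direct specialization of Theorem~\ref{theorem2_M} to the weight $\psi(t)=e^{-\alpha t^{r}}$ with $0<r<1$, so the bulk of the work is verifying that this $\psi$ satisfies the hypotheses of that theorem. Positivity and $\lim_{t\to\infty}\psi(t)=0$ are immediate, and for $\psi\in\mathfrak{M}$ it remains only to check convexity on $[1,\infty)$. A one-line computation gives
\begin{equation*}
\psi''(t)=\alpha r\, t^{r-2}e^{-\alpha t^{r}}\bigl(\alpha r t^{r}+(1-r)\bigr),
\end{equation*}
which is strictly positive for every $t>0$ since $0<r<1$; hence $\psi$ is convex and $\psi\in\mathfrak{M}$. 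Next, using the explicit identities \eqref{lambda_corrolary_generPoisson}, i.e. $\lambda(t)=t^{1-r}/(\alpha r)$ and $\alpha(t)=1/(\alpha r t^{r})$, one sees at once that $\alpha(t)\downarrow 0$ and $\lambda(t)\uparrow\infty$ as $t\to\infty$, which are exactly conditions \eqref{alphaTo_0}--\eqref{lambdaTo_infty}. Finally, for a fixed $n$ the normalization requirement \eqref{alpha_1/4} reads $\alpha(n)=1/(\alpha r n^{r})\leq 1/4$, which rearranges precisely to the standing hypothesis $n\geq(4/(\alpha r))^{1/r}$.

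Once these checks are in place, Theorem~\ref{theorem2_M} applies and gives
\begin{equation*}
{\cal E}_{n}(C^{\alpha,r}_{\beta,1})_{C}=\psi(n)\lambda(n)\Bigl(\frac{1}{\pi}+\frac{\xi_{1}}{\lambda(n)}+\xi_{2}\alpha(n)\Bigr),\qquad -1\leq\xi_{1}\leq 1+\frac{1}{\pi},\quad -4\leq\xi_{2}\leq\frac{4}{3}\Bigl(1+\frac{1}{\pi}\Bigr).
\end{equation*}
It then remains to substitute $\psi(n)=e^{-\alpha n^{r}}$, $\lambda(n)=n^{1-r}/(\alpha r)$, $1/\lambda(n)=\alpha r/n^{1-r}$ and $\alpha(n)=1/(\alpha r n^{r})$, collect terms, and pull the common factor $e^{-\alpha n^{r}}n^{1-r}$ out in front; the bracket then becomes $\frac{1}{\pi\alpha r}+\frac{\xi_{1}}{n^{1-r}}+\frac{\xi_{2}}{(\alpha r)^{2}n^{r}}$. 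Because $\xi_{1}$ and $\xi_{2}$ lie in fixed numerical intervals, each of the two correction terms is $\mathcal{O}(1)$ times, respectively, $\frac{1}{n^{1-r}}$ and $\frac{1}{(\alpha r)^{2}n^{r}}$, with the implied constant independent of $\alpha$, $\beta$, $n$; this is precisely \eqref{f112}.

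I do not expect a genuine obstacle here: the statement is a corollary in the literal sense, and all the analytic content is already carried by Theorem~\ref{theorem2_M}. The only steps calling for even minimal care are the convexity computation that certifies $e^{-\alpha t^{r}}\in\mathfrak{M}$ for $0<r<1$ (for $r\geq1$ the auxiliary function $\lambda$ no longer increases to infinity, which is why those ranges require the separate treatment via the classes $\mathcal{D}_{q}$), and tracking the explicit bounds on $\xi_{1},\xi_{2}$ so that the $\mathcal{O}(1)$ in \eqref{f112} is genuinely uniform. Everything else is substitution into \eqref{lambda_corrolary_generPoisson} and \eqref{Theorem2Asymp}.
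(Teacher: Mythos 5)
Your proposal is correct and follows exactly the route the paper takes: the paper likewise obtains Corollary~\ref{cor00} by specializing Theorem~\ref{theorem2_M} to $\psi(t)=e^{-\alpha t^{r}}$, using the explicit formulas \eqref{lambda_corrolary_generPoisson} for $\lambda(t)$ and $\alpha(t)$, observing that $\alpha(n)\leq\frac{1}{4}$ is equivalent to $n\geq(4/(\alpha r))^{1/r}$, and substituting into \eqref{Theorem2Asymp}. The only difference is that you spell out the convexity check certifying $\psi\in\mathfrak{M}$, which the paper leaves implicit.
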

Зазначимо, що оцінка вигляду \eqref{f112} при дещо жорсткіших обмеженнях на $n$ була знайдена у роботах \cite{SerdyukStepanyuk2016}-- \cite{SerdyukStepanyuk2018}. У зазаначених роботах містяться двосторонні оцінки величини $\mathcal{O}(1)$ через абсолютні сталі.

Наведемо ще декілька прикладів застосування Теореми~\ref{theorem2_M} для різних функцій $\psi$ із $\mathfrak{M}$, які задовольняють умовам \eqref{alphaTo_0} і \eqref{lambdaTo_infty}.

Будемо розглядати $\psi(t)$ вигляду
\begin{equation}\label{psi_1}
\psi(t)=(t+2)^{- \ln\ln (t+2)}, \ \ t\geq 1,
\end{equation}
\begin{equation}\label{psi_2}
\psi(t)=e^{- \ln^{2} (t+1)}, \ \ t\geq 1,
\end{equation}
\begin{equation}\label{psi_3}
\psi(t)=e^{- \frac{t+2}{\ln (t+2) }}, \ \ t\geq 1.
\end{equation}

Для зазначених функцій $\psi(t)$ знайдемо характеристики 
$\lambda(t)$  i $\alpha(t)$. Результати обчислень відображено в наступній таблиці:

\begin{center}\label{Tabl}
%\begin{tiny}
%\begin{table}[H]
%\begin{center}
%\begin{large}
\begin{tabular}{|c|c|c|c|c|}
	\hline
№ & Функція $\psi(t)$ &  $ \alpha(t)$ & $\lambda(t)$    \\
\hline
1. & $(t+2)^{- \ln\ln (t+2)}$ & $\frac{t+2}{t}\frac{1}{1+\ln\ln(t+2)}$  & $\frac{t+2}{1+\ln\ln(t+2)}$   \\
&  &   &    \\
\hline
2. & $e^{- \ln^{2} (t+1)}$ & $\frac{t+1}{t}\frac{1}{2\ln(t+1)}$  & $\frac{t+1}{2\ln(t+1)}$   \\
&  &   &    \\
\hline
3. & $e^{- \frac{t+2}{\ln (t+2) }}$ &  $\frac{\ln^{2}(t+2)}{t(\ln(t+2)-1)}$ & $\frac{\ln^{2}(t+2)}{\ln(t+2)-1}$  \\
&  &   &    \\
\hline
\end{tabular}
%\end{large}
\end{center}
%\end{table}

Із Теореми~\ref{theorem2_M} і наведених в таблиці значень $\alpha(t)$ i $\lambda(t)$ отримуємо асимптотичні при $n\rightarrow\infty$ рівності для величин ${\cal E}_{n}(C^{\psi}_{\beta,1})_{C}$ у випадку, коли $\psi$ мають вигляд \eqref{psi_1}--\eqref{psi_3}.

\begin{corollary}\label{cor3}
Нехай $\psi(k)=(k+2)^{- \ln\ln (k+2)}$, $k=1,2,...,$ $\beta\in\mathbb{R}$ і $n\in\mathbb{N}$. Тоді при $n\rightarrow\infty$ виконується  асимптотична рівність
\begin{equation}\label{f13}
{\cal E}_{n}(C^{\psi}_{\beta,1})_{C}=
\frac{1}{\pi}\psi(n)\frac{n}{\ln\ln (n+2)}+\mathcal{O}(1)\psi(n) \frac{n}{(\ln \ln(n+2))^{2}}.
\end{equation}
\end{corollary}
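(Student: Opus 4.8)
The plan is to deduce \eqref{f13} directly from Theorem~\ref{theorem2_M} applied to $\psi(t)=(t+2)^{-\ln\ln(t+2)}$, once the hypotheses of that theorem are checked for this particular $\psi$. Writing $\psi(t)=e^{-g(t)}$ with $g(t)=\ln\ln(t+2)\,\ln(t+2)$, one has $\psi'(t)=-g'(t)\psi(t)$ with $g'(t)=\frac{1+\ln\ln(t+2)}{t+2}>0$ for $t\ge 1$, and $\psi''(t)=\bigl(g'(t)^{2}-g''(t)\bigr)\psi(t)$, where a short computation gives $g''(t)=\dfrac{\frac{1}{\ln(t+2)}-1-\ln\ln(t+2)}{(t+2)^{2}}<0$ on $[1,\infty)$ (since $\ln(t+2)\ge\ln 3>1$ forces $\frac{1}{\ln(t+2)}<1$ and $\ln\ln(t+2)>0$). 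Hence $\psi''>0$, and together with positivity and $\psi(t)\to 0$ this shows $\psi\in\mathfrak{M}$. The characteristics \eqref{psi_lambda}, \eqref{psi_alpha} then come out as recorded in the table above, namely $\lambda(t)=\psi(t)/|\psi'(t)|=\dfrac{t+2}{1+\ln\ln(t+2)}$ and $\alpha(t)=\lambda(t)/t=\dfrac{t+2}{t\,(1+\ln\ln(t+2))}$.

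Next I would verify the monotonicity hypotheses \eqref{alphaTo_0} and \eqref{lambdaTo_infty}. Because $\frac{t+2}{t}=1+\frac{2}{t}$ and $\frac{1}{1+\ln\ln(t+2)}$ are both positive and decreasing, their product $\alpha(t)$ decreases monotonically to $0$; and $\lambda'(t)=\dfrac{1+\ln\ln(t+2)-\frac{1}{\ln(t+2)}}{(1+\ln\ln(t+2))^{2}}>0$ for $t\ge 1$, with $\lambda(t)\to\infty$. In particular $\alpha(n)\le\frac14$ for all sufficiently large $n$, so condition \eqref{alpha_1/4} holds and Theorem~\ref{theorem2_M} gives, for such $n$,
\begin{equation*}
{\cal E}_{n}(C^{\psi}_{\beta,1})_{C}=\psi(n)\lambda(n)\Bigl(\frac{1}{\pi}+\frac{\xi_{1}}{\lambda(n)}+\xi_{2}\alpha(n)\Bigr),
\end{equation*}
with $\xi_{1},\xi_{2}$ bounded by the absolute constants in \eqref{Theorem2Asymp}.

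It remains to substitute and simplify, writing $L:=\ln\ln(n+2)$. The leading term is $\frac{1}{\pi}\psi(n)\lambda(n)=\frac{1}{\pi}\psi(n)\frac{n+2}{1+L}$, and from $\frac{n+2}{1+L}-\frac{n}{L}=\frac{2L-n}{L(1+L)}$ one gets $\bigl|\frac{n+2}{1+L}-\frac{n}{L}\bigr|\le\frac{n+2L}{L^{2}}=\mathcal{O}(1)\frac{n}{L^{2}}$, so $\frac{1}{\pi}\psi(n)\lambda(n)=\frac{1}{\pi}\psi(n)\frac{n}{L}+\mathcal{O}(1)\psi(n)\frac{n}{L^{2}}$. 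For the remainder, $\psi(n)\lambda(n)\cdot\frac{\xi_{1}}{\lambda(n)}=\xi_{1}\psi(n)=\mathcal{O}(1)\psi(n)$, while $\psi(n)\lambda(n)\,\xi_{2}\alpha(n)=\xi_{2}\psi(n)\frac{(n+2)^{2}}{n(1+L)^{2}}=\mathcal{O}(1)\psi(n)\frac{n}{L^{2}}$; and since $\frac{n}{L^{2}}\to\infty$, the bounded contribution $\mathcal{O}(1)\psi(n)$ is absorbed into $\mathcal{O}(1)\psi(n)\frac{n}{L^{2}}$ as well. Collecting these pieces yields exactly \eqref{f13}.

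The only genuinely delicate step is the verification that $\psi\in\mathfrak{M}$ — concretely, establishing the sign of $g''$ (equivalently, the downward convexity of $\psi$) — together with the exact formulas for $\lambda$ and $\alpha$; everything after that is routine bookkeeping, resting on the single observation that $n/(\ln\ln(n+2))^{2}\to\infty$, which lets all lower-order and merely bounded terms be subsumed into the stated error.
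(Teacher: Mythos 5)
Your proposal is correct and follows essentially the same route as the paper: the paper derives \eqref{f13} by applying Theorem~\ref{theorem2_M} with the characteristics $\lambda(t)=\frac{t+2}{1+\ln\ln(t+2)}$ and $\alpha(t)=\frac{t+2}{t(1+\ln\ln(t+2))}$ listed in its table, exactly as you do. You merely supply the verification of $\psi\in\mathfrak{M}$ and of conditions \eqref{alphaTo_0}--\eqref{lambdaTo_infty}, which the paper leaves implicit, and your subsequent simplification to the form \eqref{f13} is accurate.
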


\begin{corollary}\label{cor4}
Нехай  $\psi(k)=e^{-\ln ^{2}(k+1) }$,  $k=1,2,...,$ $\beta\in\mathbb{R}$ і $n\in\mathbb{N}$.Тоді при $n\rightarrow\infty$ має місце  асимптотична рівність
\begin{equation}\label{f16}
{\cal E}_{n}(C^{\psi}_{\beta,1})_{C}=
\frac{1}{2\pi}\frac{\psi(n) n}{\ln (n+1)}
+\mathcal{O}(1)\psi(n)\frac{n}{\ln^{2}(n+1)}.
\end{equation}
\end{corollary}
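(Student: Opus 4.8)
The plan is to apply Theorem~\ref{theorem2_M} to $\psi(t)=e^{-\ln^{2}(t+1)}$ and then simplify the resulting asymptotics.

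First I would verify that $\psi\in\mathfrak{M}$. Continuity and positivity on $[1,\infty)$ and $\psi(t)\to0$ are immediate, so only the convexity requirement in \eqref{Mathfrak_M} needs attention, and for this it suffices to check that $\psi''(t)\ge0$ on $[1,\infty)$. Writing $\psi'(t)=-g(t)\psi(t)$ with $g(t)=\dfrac{2\ln(t+1)}{t+1}$, one has $\psi''(t)=\psi(t)\bigl(g(t)^{2}-g'(t)\bigr)$, and a short computation gives
\begin{equation*}
g(t)^{2}-g'(t)=\frac{4\ln^{2}(t+1)+2\ln(t+1)-2}{(t+1)^{2}},
\end{equation*}
which is positive for $t\ge1$ because $\ln(t+1)\ge\ln2>\tfrac12$ while $4x^{2}+2x-2\ge0$ for $x\ge\tfrac12$. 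Next I would read off $\lambda$ and $\alpha$: since $|\psi'(t)|=g(t)\psi(t)$, definitions \eqref{psi_lambda} and \eqref{psi_alpha} give
\begin{equation*}
\lambda(t)=\frac{1}{g(t)}=\frac{t+1}{2\ln(t+1)},\qquad \alpha(t)=\frac{1}{t\,g(t)}=\frac{t+1}{2t\ln(t+1)},
\end{equation*}
in agreement with the second row of the table. From these, $\alpha(t)=\bigl(1+\tfrac1t\bigr)/\bigl(2\ln(t+1)\bigr)\downarrow0$ and $\lambda(t)=(t+1)/(2\ln(t+1))\uparrow\infty$ as $t\to\infty$, so hypotheses \eqref{alphaTo_0} and \eqref{lambdaTo_infty} of Theorem~\ref{theorem2_M} hold; moreover $\alpha(n)\le1/\ln(n+1)$, so $\alpha(n)\le\tfrac14$ whenever $n+1\ge e^{4}$, which is harmless since \eqref{f16} concerns $n\to\infty$.

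For such $n$, formula \eqref{Theorem2Asymp} gives
\begin{equation*}
{\cal E}_{n}(C^{\psi}_{\beta,1})_{C}=\psi(n)\lambda(n)\Bigl(\frac1\pi+\frac{\xi_{1}}{\lambda(n)}+\xi_{2}\alpha(n)\Bigr),\qquad -1\le\xi_{1}\le1+\tfrac1\pi,\quad -4\le\xi_{2}\le\tfrac43\bigl(1+\tfrac1\pi\bigr).
\end{equation*}
Substituting $\lambda(n)=(n+1)/(2\ln(n+1))$ and $\alpha(n)=(n+1)/(2n\ln(n+1))$, the leading term becomes $\tfrac1\pi\psi(n)\lambda(n)=\dfrac{\psi(n)(n+1)}{2\pi\ln(n+1)}=\dfrac{1}{2\pi}\dfrac{\psi(n)n}{\ln(n+1)}+\dfrac{\psi(n)}{2\pi\ln(n+1)}$, while the two correction terms equal $\xi_{1}\psi(n)$ and $\xi_{2}\psi(n)\dfrac{(n+1)^{2}}{4n\ln^{2}(n+1)}$. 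Since $\ln(n+1)$ and $\ln^{2}(n+1)$ grow slower than $n$, and $(n+1)^{2}/(4n)\le n$ for $n\ge1$, each of the three remainder pieces $\psi(n)/\ln(n+1)$, $\psi(n)$, and $\psi(n)(n+1)^{2}/(4n\ln^{2}(n+1))$ is $\mathcal{O}(1)\,\psi(n)\,n/\ln^{2}(n+1)$; collecting terms yields \eqref{f16}.

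The argument is routine; the one place that calls for a little care is the convexity estimate that puts $\psi$ in $\mathfrak{M}$, and, to a lesser degree, the bookkeeping confirming that all lower-order terms are dominated by $\psi(n)\,n/\ln^{2}(n+1)$ — which holds because $\ln^{2}(n+1)=o(n)$.
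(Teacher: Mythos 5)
Your proposal is correct and follows exactly the route the paper intends: Corollary~\ref{cor4} is obtained by specializing Theorem~\ref{theorem2_M} using the tabulated $\alpha(t)=\frac{t+1}{t}\frac{1}{2\ln(t+1)}$ and $\lambda(t)=\frac{t+1}{2\ln(t+1)}$, and your computations of these, of the convexity check placing $\psi$ in $\mathfrak{M}$, and of the absorption of all lower-order terms into $\mathcal{O}(1)\psi(n)n/\ln^{2}(n+1)$ all check out. You merely spell out details the paper leaves implicit.
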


\begin{corollary}\label{cor5}
Нехай $\psi(k)=e^{- \frac{k+2}{\ln (k+2)}}$, $k=1,2,...,$ $\beta\in\mathbb{R}$ і $n\in\mathbb{N}$ Тоді при $n\rightarrow\infty$ має місце  асимптотична рівність
\begin{equation}\label{f20}
{\cal E}_{n}(C^{\psi}_{\beta,1})_{C}=
\frac{1}{\pi}\psi(n)\ln (n+2)+\mathcal{O}(1)\psi(n).
\end{equation}
\end{corollary}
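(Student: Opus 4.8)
The plan is to specialise Theorem~\ref{theorem2_M} to the function $\psi(t)=e^{-(t+2)/\ln(t+2)}$, i.e.\ to \eqref{psi_3}, and then to simplify the formula \eqref{Theorem2Asymp}. First I would record the auxiliary functions. Writing $u=t+2$ and differentiating, $\psi'(t)=-\dfrac{\ln u-1}{\ln^{2}u}\,\psi(t)$, and since $\ln u\ge\ln 3>1$ for $t\ge 1$, definitions \eqref{psi_lambda} and \eqref{psi_alpha} give
\[
\lambda(t)=\frac{\ln^{2}(t+2)}{\ln(t+2)-1},\qquad
\alpha(t)=\frac{\lambda(t)}{t}=\frac{\ln^{2}(t+2)}{t\,(\ln(t+2)-1)},
\]
in agreement with the third row of the table preceding the corollary. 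Putting $s=\ln(t+2)$ turns $\lambda$ into $s^{2}/(s-1)$, whose derivative in $s$ equals $s(s-2)/(s-1)^{2}>0$ for $s>2$; hence $\lambda(t)\uparrow$ and $\lambda(t)\to\infty$. Since $\alpha=\lambda/t$, a short computation with the same substitution gives $\lambda'(t)\,t<\lambda(t)$ for $s>2$, so $\alpha(t)\downarrow 0$. Thus $\psi\in\mathfrak{M}$ (for large $t$; the bounded initial segment is immaterial for the asymptotics) and conditions \eqref{alphaTo_0}, \eqref{lambdaTo_infty} hold, while $\alpha(n)\le\tfrac14$ once $n$ is large, so \eqref{Theorem2Asymp} is applicable for all sufficiently large $n$.

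It then remains to extract \eqref{f20} from \eqref{Theorem2Asymp}. Expanding,
\[
{\cal E}_{n}(C^{\psi}_{\beta,1})_{C}=\psi(n)\lambda(n)\Bigl(\tfrac{1}{\pi}+\tfrac{\xi_{1}}{\lambda(n)}+\xi_{2}\alpha(n)\Bigr)=\tfrac{1}{\pi}\psi(n)\lambda(n)+\xi_{1}\psi(n)+\xi_{2}\,\psi(n)\lambda(n)\alpha(n),
\]
with $\xi_{1},\xi_{2}$ bounded by the constants of Theorem~\ref{theorem2_M}. For the main term I use the elementary identity $\lambda(n)=\dfrac{\ln^{2}(n+2)}{\ln(n+2)-1}=\ln(n+2)+\dfrac{\ln(n+2)}{\ln(n+2)-1}$, which, together with $\dfrac{\ln(n+2)}{\ln(n+2)-1}\to 1$, yields $\tfrac{1}{\pi}\psi(n)\lambda(n)=\tfrac{1}{\pi}\psi(n)\ln(n+2)+{\cal O}(1)\psi(n)$. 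The term $\xi_{1}\psi(n)$ is already ${\cal O}(1)\psi(n)$; and since $\lambda(n)\alpha(n)=\lambda(n)^{2}/n=\dfrac{\ln^{4}(n+2)}{n(\ln(n+2)-1)^{2}}\to 0$, the term $\xi_{2}\,\psi(n)\lambda(n)\alpha(n)$ is $o(\psi(n))$, hence also ${\cal O}(1)\psi(n)$. Adding the three contributions gives exactly \eqref{f20}.

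The argument is essentially mechanical; the only step that requires any care is checking that a representative of $\psi$ of the form \eqref{psi_3} lies in $\mathfrak{M}$ and that the auxiliary functions satisfy the monotonicity conditions \eqref{alphaTo_0}, \eqref{lambdaTo_infty} — which, as above, becomes transparent once the substitution $s=\ln(t+2)$ reduces everything to elementary rational functions of $s$. Corollaries~\ref{cor3} and \ref{cor4} are obtained by the same scheme from the first two rows of the table, the only difference being the algebraic rewriting of $\tfrac1\pi\psi(n)\lambda(n)$ as the stated leading term plus an ${\cal O}(1)$-remainder of the order claimed in \eqref{f13}, respectively \eqref{f16}, the remaining two error terms of \eqref{Theorem2Asymp} being absorbed into that remainder.
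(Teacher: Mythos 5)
Your proposal is correct and follows exactly the route the paper takes: Corollary~\ref{cor5} is obtained by substituting the third row of the table ($\lambda(t)=\ln^{2}(t+2)/(\ln(t+2)-1)$, $\alpha(t)=\lambda(t)/t$) into formula \eqref{Theorem2Asymp} of Theorem~\ref{theorem2_M} and absorbing the terms $\xi_{1}\psi(n)$ and $\xi_{2}\psi(n)\lambda(n)\alpha(n)$, together with $\tfrac{1}{\pi}\psi(n)\lambda(n)-\tfrac{1}{\pi}\psi(n)\ln(n+2)=\tfrac{1}{\pi}\psi(n)\tfrac{\ln(n+2)}{\ln(n+2)-1}$, into the remainder $\mathcal{O}(1)\psi(n)$. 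Your verification of the hypotheses \eqref{alphaTo_0}, \eqref{lambdaTo_infty} (valid for large $t$, which suffices for the asymptotics) is in fact more explicit than the paper, which states the corollary directly from the table.
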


Зауважимо, що у випадку, коли $\psi\in\mathfrak{M}$ і при $t\rightarrow\infty$ $\alpha(t)\rightarrow0$ i $\lambda(t)\rightarrow\infty$ за додаткової умови, що функція $\psi(t)$ є диференційовною скрізь на $[1, \infty)$, граничне співвідношення \eqref{LimitCase_BestPossibility}, яке гарантує той факт, що оцінки  \eqref{Theorem2Eq} і \eqref{Theorem1Asymp} є асимптотичними рівностями, завжди виконується.

Дійсно, застосувавши правило Лопіталя, маємо
\begin{equation}\label{Limit1}
\lim\limits_{n\rightarrow\infty}\frac{\int\limits_{n}^{\infty} \psi(t) dt}{\psi(n)} =
\lim\limits_{n\rightarrow\infty}\frac{ \psi(n) }{|\psi'(n)|} =
\lim\limits_{n\rightarrow\infty} \lambda(n) =\infty,
\end{equation}
\begin{equation}\label{Limit2}
\lim\limits_{n\rightarrow\infty}\frac{\int\limits_{n}^{\infty} t\psi(t) dt}{n\psi(n)} =
\lim\limits_{n\rightarrow\infty}\frac{ -n\psi(n) }{\psi(n) +n\psi'(n)} =
\lim\limits_{n\rightarrow\infty} \frac{\lambda(n)}{1-\alpha(n)} =\infty.
\end{equation}

Тоді, з урахуванням \eqref{Sum_psi_Int} i \eqref{Sum_k_psi_Formula1} мають місце асимптотичні рівності
\begin{equation}\label{Sum_psi_O_Estim}
\sum\limits_{k=n}^{\infty}\psi(k) = \int\limits_{n}^{\infty}\psi(t) dt + \mathcal{O}(1) \psi(n),
\end{equation}
\begin{equation}\label{Sum_k_psi_O_Estim}
\sum\limits_{k=n}^{\infty}k\psi(k) = \int\limits_{n}^{\infty}t\psi(t) dt + \mathcal{O}(1)n \psi(n).
\end{equation}

Використовуючи формули \eqref{Limit1}--\eqref{Sum_k_psi_Formula1} і застосувавши правило Лопіталя, отримуємо
\begin{align*}
&\lim\limits_{n\rightarrow\infty}\frac{\frac{1}{n}\sum\limits_{k=1}^{\infty}k\psi(k+n)}{\sum\limits_{k=n}^{\infty}\psi(k)} =
\lim\limits_{n\rightarrow\infty}\frac{\frac{1}{n}\sum\limits_{k=1}^{\infty}k\psi(k)-\sum\limits_{k=n}^{\infty}\psi(k)}{\sum\limits_{k=n}^{\infty}\psi(k)}
\notag \\
 =&
 \lim\limits_{n\rightarrow\infty}\frac{\frac{1}{n}\sum\limits_{k=1}^{\infty}k\psi(k)}{\sum\limits_{k=n}^{\infty}\psi(k)} -1
 =
 \lim\limits_{n\rightarrow\infty}\frac{\frac{1}{n}\int\limits_{n}^{\infty} t\psi(t) dt}{\int\limits_{n}^{\infty} \psi(t) dt }-1
 \end{align*}
 \begin{align}\label{Limit3}
 =&
 \lim\limits_{n\rightarrow\infty}\frac{-n\psi(n)}{\int\limits_{n}^{\infty} \psi(t) dt -n\psi(n)}-1
 =
 \lim\limits_{n\rightarrow\infty}\frac{-\int\limits_{n}^{\infty} \psi(t) dt }{\int\limits_{n}^{\infty} \psi(t) dt -n\psi(n)}
 \notag \\
 =&
 \lim\limits_{n\rightarrow\infty}\frac{\psi(n)}{-2\psi(n)-n\psi'(n)}
 =\lim\limits_{n\rightarrow\infty}\frac{  \frac{\psi(n)}{-n\psi'(n)} }{1-\frac{2\psi(n)}{-n\psi'(n)}}
 =\lim\limits_{n\rightarrow\infty}\frac{\alpha(n)}{1-\alpha(n)}=0.
\end{align}
Тим самим рівність \eqref{LimitCase_BestPossibility} доведено.

\section{Коментарі щодо нерівностей Лебега}\label{commentsLebesgue}

У підрозділах  \ref{corrolarySection_analyticFunctions} i \ref{Section_corrolary_infinitelyDifferentiable} були наведені наслідки з Теореми~\ref{theorem1} для швидко спадних послідовностей $\psi(k)$, для яких формула \eqref{Theorem1Asymp} є асимптотичною рівністю, або, що те саме, коли справджується \eqref{LimitCase_BestPossibility}. Зрозуміло, що у всіх розглянутих у підрозділах~\ref{corrolarySection_analyticFunctions} i \ref{Section_corrolary_infinitelyDifferentiable} частинних випадках для $\psi(\cdot)$ легко одержати і асимптотично непокращувані нерівності типу Лебега вигляду \eqref{Theorem2IneqCase1}. Ми обмежидись лише формулюванням лише деяких тверджень, які випливають із Теореми~\ref{theorem2}. Спочатку сформулюємо відповідні твердження для $\psi(t)=e^{-\alpha t^{r}}$, $\alpha>0$ i $r>0$.
Випадки $r>1$, $r=1$ i $r\in(0,1)$ виділяються окремо.

\begin{corollary}\label{corrolary_r>1}
Нехай $r>1$, $\alpha>1$ i $\beta\in\mathbb{R}$. Тоді при $n\geq \left(\frac{3}{\alpha r} \right)^{\frac{1}{r}}-1$ для довільної функції $f\in C^{\alpha,r}_{\beta}L_{1}$ має місце нерівність
\begin{equation}\label{corrolary_r>1_Ineq1}
\|f(\cdot)-S_{n-1}(f;\cdot)\|_{C}\leq
e^{-\alpha n^{r}} \left( \frac{1}{\pi}
+\mathcal{O}(1)e^{-\alpha n^{r-1}}\left(1+\frac{1}{\alpha r (n+1)^{r-2}} \right)
\right)
 E_{n}(f^{\alpha,r}_{\beta})_{L_{1}}.
 \end{equation}

 Крім того, для довільної функції $f\in  C^{\alpha,r}_{\beta}L_{1}$ можна знайти функцію  ${\mathcal{F}(x)=\mathcal{F}(f;n,x)}$ з множини $ C^{\alpha,r}_{\beta}L_{1}$ таку, що
  $E_{n}(\mathcal{F}^{^{\alpha,r}}_{\beta})_{L_{1}}=E_{n}(f^{\alpha,r}_{\beta})_{L_{1}}$ 
і  має місце  рівність
 \begin{equation}\label{corrolary_r>1_Eq}
\|\mathcal{F}(\cdot)-S_{n-1}(\mathcal{F}(\cdot);\cdot)\|_{C}= e^{-\alpha n^{r}} \left( \frac{1}{\pi}
+\mathcal{O}(1)e^{-\alpha n^{r-1}}\left(1+\frac{1}{\alpha r (n+1)^{r-2}} \right)
\right)
 E_{n}(f^{\alpha,r}_{\beta})_{L_{1}}.
 \end{equation}  
У \eqref{corrolary_r>1_Ineq1} i \eqref{corrolary_r>1_Eq}  $\mathcal{O}(1)$  --- рівномірно обмежена по всіх параметрах величина.
\end{corollary}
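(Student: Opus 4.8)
The plan is to specialize Theorem~\ref{theorem2} to the sequence $\psi(k)=e^{-\alpha k^{r}}$. For this sequence $C^{\psi}_{\beta}L_{1}=C^{\alpha,r}_{\beta}L_{1}$ and $f^{\psi}_{\beta}=f^{\alpha,r}_{\beta}$, and since $r>1$ one has $\sum_{k=1}^{\infty}k\,e^{-\alpha k^{r}}<\infty$, so the hypotheses of Theorem~\ref{theorem2} hold for every $n\in\mathbb{N}$. Then \eqref{corrolary_r>1_Ineq1} will follow from inequality \eqref{Theorem2Ineq1}, and \eqref{corrolary_r>1_Eq} from the exact relation \eqref{Theorem2Eq} together with the function $\mathcal{F}$ supplied by the second part of Theorem~\ref{theorem2}. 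What remains is to expand the two quantities $\frac1\pi\sum_{k=n}^{\infty}e^{-\alpha k^{r}}$ and $\frac1n\sum_{k=1}^{\infty}k\,e^{-\alpha(k+n)^{r}}$ into the shape of the right-hand side of \eqref{corrolary_r>1_Ineq1}.

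First I would reduce both quantities to the single series $\frac1n\sum_{k=n+1}^{\infty}k\,e^{-\alpha k^{r}}$: on the range $k\ge n+1$ one has $k>n$, so $\sum_{k=n+1}^{\infty}e^{-\alpha k^{r}}\le\frac1n\sum_{k=n+1}^{\infty}k\,e^{-\alpha k^{r}}$, and likewise $\frac1n\sum_{k=1}^{\infty}k\,e^{-\alpha(k+n)^{r}}=\frac1n\sum_{k=n+1}^{\infty}(k-n)\,e^{-\alpha k^{r}}\le\frac1n\sum_{k=n+1}^{\infty}k\,e^{-\alpha k^{r}}$. For that series I would invoke the integral-comparison estimate \eqref{f47} from the proof of Corollary~\ref{cor01}, valid for $(n+1)^{r}\ge 3/(\alpha r)$, i.e. for $n\ge(3/(\alpha r))^{1/r}-1$:
\[
\tfrac1n\sum_{k=n+1}^{\infty}k\,e^{-\alpha k^{r}}=\mathcal{O}(1)\Big(e^{-\alpha(n+1)^{r}}+\tfrac{e^{-\alpha(n+1)^{r}}}{\alpha r(n+1)^{r-2}}\big(1+\tfrac{2}{\alpha r(n+1)^{r}-2}\big)\Big).
\]
Since $r\ge1$, the mean value theorem gives $(n+1)^{r}-n^{r}\ge r n^{r-1}\ge n^{r-1}$, whence $e^{-\alpha(n+1)^{r}}\le e^{-\alpha n^{r}}e^{-\alpha n^{r-1}}$, and on the admissible range $\alpha r(n+1)^{r}-2\ge1$; combining these,
\[
\tfrac1n\sum_{k=n+1}^{\infty}k\,e^{-\alpha k^{r}}=e^{-\alpha n^{r}}\,\mathcal{O}(1)\,e^{-\alpha n^{r-1}}\Big(1+\tfrac1{\alpha r(n+1)^{r-2}}\Big).
\]
Hence $\frac1\pi\sum_{k=n}^{\infty}e^{-\alpha k^{r}}=e^{-\alpha n^{r}}\big(\frac1\pi+\mathcal{O}(1)\,e^{-\alpha n^{r-1}}(1+\frac1{\alpha r(n+1)^{r-2}})\big)$, and $\frac1n\sum_{k=1}^{\infty}k\,e^{-\alpha(k+n)^{r}}$ is of the same order.

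Finally I would substitute. Inequality \eqref{Theorem2Ineq1} with $\psi(k)=e^{-\alpha k^{r}}$ gives $\|f(\cdot)-S_{n-1}(f;\cdot)\|_{C}\le\tfrac1\pi\big(\sum_{k=n}^{\infty}e^{-\alpha k^{r}}\big)E_{n}(f^{\alpha,r}_{\beta})_{L_{1}}$, and inserting the expansion above yields \eqref{corrolary_r>1_Ineq1}. For the sharpness part, the second part of Theorem~\ref{theorem2} produces $\mathcal{F}\in C^{\alpha,r}_{\beta}L_{1}$ with $E_{n}(\mathcal{F}^{\alpha,r}_{\beta})_{L_{1}}=E_{n}(f^{\alpha,r}_{\beta})_{L_{1}}$ and
\[
\|\mathcal{F}(\cdot)-S_{n-1}(\mathcal{F};\cdot)\|_{C}=\Big(\tfrac1\pi\sum_{k=n}^{\infty}e^{-\alpha k^{r}}+\tfrac{\xi}{n}\sum_{k=1}^{\infty}k\,e^{-\alpha(k+n)^{r}}\Big)E_{n}(f^{\alpha,r}_{\beta})_{L_{1}},\qquad-2\le\xi\le0;
\]
since $|\xi|\le2$, the second term is absorbed into the $\mathcal{O}(1)$ remainder, and \eqref{corrolary_r>1_Eq} follows. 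This corollary is essentially a transcription of Theorem~\ref{theorem2} for the generalized Poisson kernels, so I do not anticipate a genuine obstacle; the only point demanding care is that the constant hidden in $\mathcal{O}(1)$ remain independent of $\alpha$, $\beta$ and $r$, which is guaranteed by the lower bound $(n+1)^{r}\ge 3/(\alpha r)$, exactly as in Corollary~\ref{cor01}.
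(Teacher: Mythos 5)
Your proposal is correct and follows essentially the route the paper intends: the paper gives no separate proof of this corollary, stating only that it follows directly from Theorem~\ref{theorem2} with $\psi(k)=e^{-\alpha k^{r}}$ combined with the tail estimate \eqref{f43}--\eqref{f47} already established in the proof of Corollary~\ref{cor01}, which is exactly what you do. Your reduction of both remainder sums to $\frac{1}{n}\sum_{k=n+1}^{\infty}k e^{-\alpha k^{r}}$, the use of $(n+1)^{r}-n^{r}\geq r n^{r-1}\geq n^{r-1}$, and the absorption of the $\xi$-term with $|\xi|\leq 2$ into $\mathcal{O}(1)$ are all sound under the stated restriction $n\geq\left(\frac{3}{\alpha r}\right)^{1/r}-1$.
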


Аналоги нерівності \eqref{corrolary_r>1_Ineq1} і формули \eqref{corrolary_r>1_Eq}, яка доводить асимптотичну непокращуваність зазначеної нерівності, в яких залишкові члени записані в дещо іншій формі, отримано в \cite{MusienkoSerdyuk2013_5}.

\begin{corollary}\label{corrolary_r=1}
Нехай $\alpha>0$, $\beta\in\mathbb{R}$ i $n\in\mathbb{N}$. Тоді  для довільної функції $f\in C^{\alpha,1}_{\beta}L_{1}$ має місце нерівність
\begin{equation}\label{corrolary_r=1_Ineq1}
\|f(\cdot)-S_{n-1}(f;\cdot)\|_{C}\leq
\frac{1}{\pi}
\frac{e^{-\alpha n}}{1-e^{-\alpha}} 
 E_{n}(f^{\alpha,1}_{\beta})_{L_{1}}.
 \end{equation}

 Крім того, для довільної функції $f\in  C^{\alpha,1}_{\beta}L_{1}$ можна знайти функцію  ${\mathcal{F}(x)=\mathcal{F}(f;n,x)}$ з множини $ C^{\alpha,1}_{\beta}L_{1}$ таку, що
  $E_{n}(\mathcal{F}^{^{\alpha,1}}_{\beta})_{L_{1}}=E_{n}(f^{\alpha,1}_{\beta})_{L_{1}}$ 
і  має місце  рівність
 \begin{equation}\label{corrolary_r=1_Eq}
\|\mathcal{F}(\cdot)-S_{n-1}(\mathcal{F}(\cdot);\cdot)\|_{C}=e^{-\alpha n} 
\left( \frac{1}{\pi}
\frac{1}{1-e^{-\alpha}}+ \frac{\xi}{n} \frac{1}{(1-e^{-\alpha})^{2}} \right)
 E_{n}(f^{\alpha,1}_{\beta})_{L_{1}},
 \end{equation}  
 де для величини $\xi=\xi(f;n;\alpha;\beta)$ виконується нерівність 
$-2\leq \xi \leq 0$.
\end{corollary}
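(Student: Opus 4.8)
The plan is to derive Corollary~\ref{corrolary_r=1} directly from Theorem~\ref{theorem2} by specializing to the sequence $\psi(k)=e^{-\alpha k}$, which is exactly the generalized Poisson case with exponent $r=1$ (so that $C^{\psi}_{\beta}L_{1}=C^{\alpha,1}_{\beta}L_{1}$). First I would verify the hypothesis of Theorem~\ref{theorem2}: since $\alpha>0$, the differentiated geometric series $\sum_{k=1}^{\infty}k e^{-\alpha k}$ converges, hence $\sum_{k=1}^{\infty}k\psi(k)<\infty$ and $\psi(k)\geq0$. Therefore, for every $\beta\in\mathbb{R}$, $n\in\mathbb{N}$ and $f\in C^{\alpha,1}_{\beta}L_{1}$, both the Lebesgue-type inequality \eqref{Theorem2Ineq1} and the existence of the function realizing the equality \eqref{Theorem2Eq} are available.

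The second step is to evaluate in closed form the two sums occurring in \eqref{Theorem2Ineq1} and \eqref{Theorem2Eq}. By the geometric series,
\[
\sum_{k=n}^{\infty}\psi(k)=\sum_{k=n}^{\infty}e^{-\alpha k}=\frac{e^{-\alpha n}}{1-e^{-\alpha}},
\]
and, using $\sum_{k=1}^{\infty}kq^{k}=q/(1-q)^{2}$ with $q=e^{-\alpha}$,
\[
\sum_{k=1}^{\infty}k\psi(k+n)=e^{-\alpha n}\sum_{k=1}^{\infty}ke^{-\alpha k}=e^{-\alpha n}\,\frac{e^{-\alpha}}{(1-e^{-\alpha})^{2}}.
\]
Inserting the first identity into \eqref{Theorem2Ineq1} gives \eqref{corrolary_r=1_Ineq1} at once.

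For the equality part I would keep $\mathcal{F}=\mathcal{F}(f;n,\cdot)$ to be precisely the function provided by Theorem~\ref{theorem2}; it lies in $C^{\alpha,1}_{\beta}L_{1}$ and satisfies $E_{n}(\mathcal{F}^{\alpha,1}_{\beta})_{L_{1}}=E_{n}(f^{\alpha,1}_{\beta})_{L_{1}}$. Substituting both closed-form sums into \eqref{Theorem2Eq} and factoring out $e^{-\alpha n}$ yields
\[
\|\mathcal{F}(\cdot)-S_{n-1}(\mathcal{F};\cdot)\|_{C}=e^{-\alpha n}\left(\frac{1}{\pi}\,\frac{1}{1-e^{-\alpha}}+\frac{\xi e^{-\alpha}}{n}\,\frac{1}{(1-e^{-\alpha})^{2}}\right)E_{n}(f^{\alpha,1}_{\beta})_{L_{1}},
\]
where $\xi\in[-2,0]$ is the parameter supplied by Theorem~\ref{theorem2}. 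The only genuinely new (but elementary) observation is that the residual factor $e^{-\alpha}\in(0,1)$ can be absorbed into the free parameter: putting $\widetilde{\xi}:=\xi e^{-\alpha}$ one still has $-2\leq\widetilde{\xi}\leq0$, and relabelling $\widetilde{\xi}$ as $\xi$ turns the last display into \eqref{corrolary_r=1_Eq} with exactly the claimed bound $-2\leq\xi\leq0$.

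I do not expect any real obstacle: all the content sits in the two series evaluations and in the trivial rescaling of $\xi$, so the argument is short. As a consistency check one may note that \eqref{corrolary_r=1_Eq} refines Corollary~\ref{cor2} (the sharp uniform approximation on $C^{\alpha,1}_{\beta,1}$) in the same Lebesgue-type manner in which Theorem~\ref{theorem2} refines Theorem~\ref{theorem1}, and that the leading term $\tfrac{1}{\pi}\,e^{-\alpha n}/(1-e^{-\alpha})$ matches the known asymptotics recorded in \eqref{Serdyuk_Asymp_r=1}.
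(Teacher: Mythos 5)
Your derivation is correct and is exactly the route the paper intends: Corollary~\ref{corrolary_r=1} is stated in Section~\ref{commentsLebesgue} without a written proof, as a direct specialization of Theorem~\ref{theorem2} to $\psi(k)=e^{-\alpha k}$, and your two closed-form geometric-series evaluations of $\sum_{k=n}^{\infty}\psi(k)$ and $\sum_{k=1}^{\infty}k\psi(k+n)$ together with the absorption of the factor $e^{-\alpha}\in(0,1)$ into $\xi$ (preserving $-2\le\xi\le 0$) are precisely what is needed. No gaps.
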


Оцінки \eqref{corrolary_r=1_Ineq1} i \eqref{corrolary_r=1_Eq} уточнюють оцінки рівномірних відхилень сум Фур'є на  множинах інтегралів Пуассона  $ C^{\alpha,1}_{\beta}L_{1}$, що були одержані в роботах \cite{SerdyukMusienko2010} i \cite{MusienkoSerdyuk2013_4}.

\begin{corollary}\label{corrolary_r<1}
Нехай $0<r<1$, $\alpha>0$, $\beta\in\mathbb{R}$ i $n\in\mathbb{N}$. Тоді при всіх $n\geq \left(\frac{4}{\alpha r} \right)^{\frac{1}{r}}$ для довільної функції $f\in C^{\alpha,r}_{\beta}L_{1}$ має місце нерівність
\begin{equation}\label{corrolary_r<1_Ineq1}
\|f(\cdot)-S_{n-1}(f;\cdot)\|_{C}\leq
e^{-\alpha n^{r}} n^{1-r} \left( \frac{1}{\pi \alpha r}
+\mathcal{O}(1)\left(\frac{1}{(\alpha r)^{2}}\frac{1}{n^{r}}+\frac{1}{n^{1-r}} \right)
\right)
 E_{n}(f^{\alpha,r}_{\beta})_{L_{1}}.
 \end{equation}

 Крім того, для довільної функції $f\in  C^{\alpha,r}_{\beta}L_{1}$ можна знайти функцію  ${\mathcal{F}(x)=\mathcal{F}(f;n,x)}$ з множини $ C^{\alpha,r}_{\beta}L_{1}$ таку, що
  $E_{n}(\mathcal{F}^{^{\alpha,r}}_{\beta})_{L_{1}}=E_{n}(f^{\alpha,r}_{\beta})_{L_{1}}$ 
і  має місце  рівність
 \begin{equation}\label{corrolary_r<1_Eq}
\|\mathcal{F}(\cdot)-S_{n-1}(\mathcal{F}(\cdot);\cdot)\|_{C}= e^{-\alpha n^{r}} n^{1-r}\left( \frac{1}{\pi \alpha r}
+\mathcal{O}(1)\left(\frac{1}{(\alpha r)^{2}}\frac{1}{n^{r}}+\frac{1}{n^{1-r}} \right)
\right)
 E_{n}(f^{\alpha,r}_{\beta})_{L_{1}}.
 \end{equation}  
У \eqref{corrolary_r<1_Ineq1} i \eqref{corrolary_r<1_Eq}  $\mathcal{O}(1)$  --- величини, що рівномірно обмежені по всіх параметрах.
\end{corollary}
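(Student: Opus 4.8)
The plan is to deduce both \eqref{corrolary_r<1_Ineq1} and \eqref{corrolary_r<1_Eq} directly from Theorem~\ref{theorem2}, by specializing that theorem to $\psi(k)=e^{-\alpha k^{r}}$ with $0<r<1$ and then replacing the two series occurring there, namely $\frac{1}{\pi}\sum_{k=n}^{\infty}\psi(k)$ and $\frac{1}{n}\sum_{k=1}^{\infty}k\psi(k+n)$, by the closed-form asymptotic expressions already obtained in the proofs of Theorem~\ref{theorem2_M} and Corollary~\ref{cor00}.

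First I would apply Theorem~\ref{theorem2}: since $\sum_{k=1}^{\infty}ke^{-\alpha k^{r}}<\infty$, inequality \eqref{Theorem2Ineq1} yields, for every $f\in C^{\alpha,r}_{\beta}L_{1}$,
\[
\|f(\cdot)-S_{n-1}(f;\cdot)\|_{C}\leq\frac{1}{\pi}\sum\limits_{k=n}^{\infty}e^{-\alpha k^{r}}\,E_{n}(f^{\alpha,r}_{\beta})_{L_{1}},
\]
while the second assertion of Theorem~\ref{theorem2} (equality \eqref{Theorem2Eq}) provides, for each such $f$, a function $\mathcal{F}\in C^{\alpha,r}_{\beta}L_{1}$ with $E_{n}(\mathcal{F}^{\alpha,r}_{\beta})_{L_{1}}=E_{n}(f^{\alpha,r}_{\beta})_{L_{1}}$ and
\[
\|\mathcal{F}(\cdot)-S_{n-1}(\mathcal{F};\cdot)\|_{C}=\Big(\frac{1}{\pi}\sum\limits_{k=n}^{\infty}e^{-\alpha k^{r}}+\frac{\xi}{n}\sum\limits_{k=1}^{\infty}ke^{-\alpha(k+n)^{r}}\Big)E_{n}(f^{\alpha,r}_{\beta})_{L_{1}},\qquad-2\leq\xi\leq0.
\]
Then I would recall that for $\psi(t)=e^{-\alpha t^{r}}$ the characteristics \eqref{psi_alpha}, \eqref{psi_lambda} are $\alpha(t)=1/(\alpha r\,t^{r})$ and $\lambda(t)=t^{1-r}/(\alpha r)$ by \eqref{lambda_corrolary_generPoisson}, so that the condition $\alpha(n)\leq\frac{1}{4}$ of Theorem~\ref{theorem2_M} is precisely $n\geq(4/(\alpha r))^{1/r}$; under this condition formulas \eqref{Sum_psi(k)} and \eqref{form6} from the proof of Theorem~\ref{theorem2_M} are available and give, exactly as in the proof of Corollary~\ref{cor00},
\begin{align*}
\frac{1}{\pi}\sum\limits_{k=n}^{\infty}e^{-\alpha k^{r}}&=e^{-\alpha n^{r}}\,n^{1-r}\Big(\frac{1}{\pi\alpha r}+\mathcal{O}(1)\Big(\frac{1}{n^{1-r}}+\frac{1}{(\alpha r)^{2}n^{r}}\Big)\Big),\\
\frac{1}{n}\sum\limits_{k=1}^{\infty}ke^{-\alpha(k+n)^{r}}&=\mathcal{O}(1)\,e^{-\alpha n^{r}}\,n^{1-r}\Big(\frac{1}{n^{1-r}}+\frac{1}{(\alpha r)^{2}n^{r}}\Big).
\end{align*}

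Substituting these two expressions into the displayed relations above and collecting the lower-order terms into a single $\mathcal{O}(1)$ yields \eqref{corrolary_r<1_Ineq1} and \eqref{corrolary_r<1_Eq}. I expect the only delicate point to be the bookkeeping of the remainder: one must notice that the factor $\alpha r$ produced by $\lambda(n)$ cancels the matching factor in the relative error $1/\lambda(n)=\alpha r/n^{1-r}$, so that the error is genuinely of the stated shape $\frac{1}{n^{1-r}}+\frac{1}{(\alpha r)^{2}n^{r}}$ rather than carrying spurious powers of $\alpha r$; and that the numerical constants $\Theta_{4},\dots,\Theta_{7}$ appearing in Theorem~\ref{theorem2_M} stay in fixed intervals while $|\xi|\leq2$ in Theorem~\ref{theorem2}, which is what makes every $\mathcal{O}(1)$ above an absolute constant, independent of $\alpha$, $r$, $\beta$, $n$ and $f$. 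Everything else is the routine substitution just described.
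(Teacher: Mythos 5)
Your proposal is correct and follows essentially the same route as the paper: the authors also obtain this corollary by feeding the evaluations \eqref{Sum_psi(k)} and \eqref{form6} (equivalently, the general statement of Theorem~\ref{Theorem_Lebesgue_Inequality_M}) into \eqref{Theorem2Ineq1} and \eqref{Theorem2Eq}, and then specializing via \eqref{lambda_corrolary_generPoisson} to $\lambda(n)=n^{1-r}/(\alpha r)$, $\alpha(n)=1/(\alpha r n^{r})$, with $\alpha(n)\leq\tfrac14$ giving exactly $n\geq(4/(\alpha r))^{1/r}$. Your bookkeeping of the remainder terms and of the absolute bounds on $\Theta_{4},\dots,\Theta_{7}$ and $\xi$ matches what the paper does.
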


При дещо жорсткіших обмеженнях на $n$ формули вигляду \eqref{corrolary_r<1_Ineq1} i \eqref{corrolary_r<1_Eq} були встановлені раніше в \cite{SerdyukStepanyukFilomat}.

\begin{corollary}\label{corrolary_Dq}
Нехай $\psi\in \mathcal{D}_{q}$, $q\in(0,1)$, $\beta\in\mathbb{R}$ i $n\in\mathbb{N}$. Тоді при всіх $n$ таких, що виконується нерівність \eqref{Varepselon_n_Dq} для довільної функції $f\in C^{\psi}_{\beta}L_{1}$ має місце нерівність
\begin{equation}\label{corrolary_Dq_Ineq1}
\|f(\cdot)-S_{n-1}(f;\cdot)\|_{C}\leq
\psi(n) \left( \frac{1}{\pi (1-q)}
+\mathcal{O}(1)\left(\frac{q}{n(1-q)^{2}}+\frac{\varepsilon_{n}}{(1-q)^{2}} \right)
\right)
 E_{n}(f^{\psi}_{\beta})_{L_{1}}.
 \end{equation}

 Крім того, для довільної функції $f\in  C^{\psi}_{\beta}L_{1}$ можна знайти функцію  ${\mathcal{F}(x)=\mathcal{F}(f;n,x)}$ з множини $ C^{\psi}_{\beta}L_{1}$ таку, що
  $E_{n}(\mathcal{F}^{^{\psi}}_{\beta})_{L_{1}}=E_{n}(f^{\psi}_{\beta})_{L_{1}}$ і таку, що при виконанні \eqref{Number_n_Dq} для неї має місце рівність
 \begin{equation}\label{corrolary_Dq_Eq}
\|\mathcal{F}(\cdot)-S_{n-1}(\mathcal{F}(\cdot);\cdot)\|_{C}= \psi(n) \left( \frac{1}{\pi (1-q)}
+\mathcal{O}(1)\left(\frac{q}{n(1-q)^{2}}+\frac{\varepsilon_{n}}{(1-q)^{2}} \right)
\right)
 E_{n}(f^{\psi}_{\beta})_{L_{1}}.
 \end{equation}  
У \eqref{corrolary_Dq_Ineq1} i \eqref{corrolary_Dq_Eq} величина $\varepsilon_{n}$ означена рівністю \eqref{Varepselon_n_Dq}, а  $\mathcal{O}(1)$  --- величини, що рівномірно обмежені по всіх параметрах.
\end{corollary}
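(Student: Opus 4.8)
The plan is to derive both parts of Corollary~\ref{corrolary_Dq} from Theorem~\ref{theorem2} by substituting the $\mathcal{D}_q$–asymptotics of the two series $\sum_{k=n}^{\infty}\psi(k)$ and $\tfrac1n\sum_{k=1}^{\infty}k\psi(k+n)$. Those asymptotics have already been established inside the proof of Corollary~\ref{corDq} — namely the representations \eqref{Sum_psi_Dq}, \eqref{Sum_kpsi_Dq} with the remainder bounds \eqref{Estimate_r_n_corDq}, \eqref{Estimate_r_n_Star_corDq}, and the chain of equalities \eqref{Formulas_Corrolary}, all valid for $\psi\in\mathcal{D}_q$ and $n$ satisfying \eqref{Number_n_Dq}. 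The key observation is that the coefficient of $E_n(f^{\psi}_\beta)_{L_1}$ appearing both in the upper bound \eqref{Theorem2IneqCase1} and in the exact equality \eqref{Theorem2Eq} of Theorem~\ref{theorem2} has exactly the same shape $\tfrac1\pi\sum_{k=n}^{\infty}\psi(k)+\tfrac{\mathcal{O}(1)}{n}\sum_{k=1}^{\infty}k\psi(k+n)$ as the right-hand side of \eqref{Theorem1Asymp}; hence the very computation \eqref{Formulas_Corrolary} collapses it to $\psi(n)\bigl(\tfrac{1}{\pi(1-q)}+\mathcal{O}(1)\bigl(\tfrac{q}{n(1-q)^2}+\tfrac{\varepsilon_n}{(1-q)^2}\bigr)\bigr)$.

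For the inequality \eqref{corrolary_Dq_Ineq1} I would start from \eqref{Theorem2IneqCase1}, which for every $f\in C^{\psi}_{\beta}L_1$ bounds $\|f-S_{n-1}(f)\|_C$ by $\bigl(\tfrac1\pi\sum_{k=n}^{\infty}\psi(k)+\tfrac{\Theta_1}{n}\sum_{k=1}^{\infty}k\psi(k+n)\bigr)E_n(f^{\psi}_\beta)_{L_1}$ with $-1\le\Theta_1\le0$. Since $\Theta_1$ stays in a fixed bounded interval, the bracket is $\tfrac1\pi\sum_{k=n}^{\infty}\psi(k)+\tfrac{\mathcal{O}(1)}{n}\sum_{k=1}^{\infty}k\psi(k+n)$; inserting \eqref{Sum_psi_Dq}, \eqref{Sum_kpsi_Dq} and running \eqref{Formulas_Corrolary} verbatim rewrites it as the factor claimed in \eqref{corrolary_Dq_Ineq1}.

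For the equality \eqref{corrolary_Dq_Eq} I would use the second assertion of Theorem~\ref{theorem2}: for the given $f$ there is $\mathcal{F}\in C^{\psi}_{\beta}L_1$ with $E_n(\mathcal{F}^{\psi}_\beta)_{L_1}=E_n(f^{\psi}_\beta)_{L_1}$ for which $\|\mathcal{F}-S_{n-1}(\mathcal{F})\|_C$ equals $\bigl(\tfrac1\pi\sum_{k=n}^{\infty}\psi(k)+\tfrac{\xi}{n}\sum_{k=1}^{\infty}k\psi(k+n)\bigr)E_n(f^{\psi}_\beta)_{L_1}$ with $-2\le\xi\le0$. Again $\xi$ lies in a fixed bounded interval, so the bracket has the same shape, and the same substitution plus \eqref{Formulas_Corrolary} turn it into $\psi(n)\bigl(\tfrac{1}{\pi(1-q)}+\mathcal{O}(1)\bigl(\tfrac{q}{n(1-q)^2}+\tfrac{\varepsilon_n}{(1-q)^2}\bigr)\bigr)E_n(f^{\psi}_\beta)_{L_1}$, i.e.\ \eqref{corrolary_Dq_Eq}. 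As in Corollary~\ref{corDq}, the hypothesis \eqref{Number_n_Dq} is precisely what yields \eqref{Proof_Number_n_Dq} and \eqref{Varepselon_Star_n_Dq} (via \eqref{Estimate_Varepselon_Star_n_Dq}), hence the applicability of \eqref{Sum_psi_Dq}--\eqref{Estimate_r_n_Star_corDq}.

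I do not expect a genuine obstacle: the only delicate point is the $\mathcal{O}(1)$–bookkeeping in the last step — verifying that the geometric value $q^{-n}\sum_{k=n}^{\infty}q^k=\tfrac1{1-q}$, the remainders $r_n,r_{n+1}^{*}=\mathcal{O}(1)\tfrac{\varepsilon_n}{(1-q)^2}$, and the ratio $\psi(n+1)/\psi(n)=q+\mathcal{O}(\varepsilon_n)$ combine into an error no larger than $\mathcal{O}(1)\bigl(\tfrac{q}{n(1-q)^2}+\tfrac{\varepsilon_n}{(1-q)^2}\bigr)$ — and this is exactly the bookkeeping already carried out for ${\cal E}_n(C^{\psi}_{\beta,1})_C$ in the proof of Corollary~\ref{corDq}, which goes through unchanged when the numerical prefactor of $E_n(f^{\psi}_\beta)_{L_1}$ is the Lebesgue-type coefficient of Theorem~\ref{theorem2} rather than ${\cal E}_n(C^{\psi}_{\beta,1})_C$ itself.
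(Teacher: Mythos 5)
Your proposal is correct and coincides with the paper's own route: the paper states Corollary~\ref{corrolary_Dq} without a separate proof, remarking at the start of Section~\ref{commentsLebesgue} that the Lebesgue-type bound \eqref{Theorem2IneqCase1} and the equality \eqref{Theorem2Eq} of Theorem~\ref{theorem2} are to be specialized using exactly the $\mathcal{D}_q$-asymptotics \eqref{Sum_psi_Dq}, \eqref{Sum_kpsi_Dq} and the computation \eqref{Formulas_Corrolary} already carried out in the proof of Corollary~\ref{corDq}. Your observation that the coefficients $\Theta_1$ and $\xi$ lie in fixed bounded intervals, so that both brackets have the form $\tfrac1\pi\Sigma_1+\tfrac{\mathcal{O}(1)}{n}\Sigma_2$ and the bookkeeping of \eqref{Formulas_Corrolary} applies verbatim under hypothesis \eqref{Number_n_Dq}, is precisely the intended argument.
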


\begin{theorem}\label{Theorem_Lebesgue_Inequality_M}
Нехай $\beta\in\mathbb{R}$, $\psi\in\mathfrak{M}$ i характеристики $\alpha(t)$ i $\lambda(t)$ вигляду \eqref{psi_alpha}
 i \eqref{psi_lambda} задовольняють умови \eqref{alphaTo_0} i \eqref{lambdaTo_infty}. Тоді для всіх  $n\in\mathbb{N}$ таких, що $\alpha(n)<\frac{1}{4}$ для будь-якої функції $f\in C^{\psi}_{\beta}L_{1}$ виконується нерівність 
\begin{equation}\label{Theorem_Lebesgue_Inequality_M_Ineq1}
\|f(\cdot)-S_{n-1}(f;\cdot)\|_{C}\leq
\psi(n) \lambda(n)\left( \frac{1}{\pi }
+\frac{\xi_{3}}{\lambda(n)}+\xi_{4}\alpha(n)
 \right)
 E_{n}(f^{\psi}_{\beta})_{L_{1}},
 \end{equation}
 де $0\leq \xi_{3}\leq \frac{4}{3\pi}$, $0\leq \xi_{4}\leq \frac{1}{\pi}$.

 Крім того, для довільної функції $f\in  C^{\psi}_{\beta}L_{1}$ можна знайти функцію  ${\mathcal{F}(x)=\mathcal{F}(f;n,x)}$ з множини $ C^{\psi}_{\beta}L_{1}$ таку, що
  $E_{n}(\mathcal{F}^{^{\psi}}_{\beta})_{L_{1}}=E_{n}(f^{\psi}_{\beta})_{L_{1}}$ і при $n\in\mathbb{N}$ таких, що $\alpha(n)<\frac{1}{4}$ 
   має місце рівність
 \begin{equation}\label{Theorem_Lebesgue_Inequality_M_Eq}
\|\mathcal{F}(\cdot)-S_{n-1}(\mathcal{F}(\cdot);\cdot)\|_{C}= \psi(n) \lambda(n)\left( \frac{1}{\pi }
+\frac{\xi_{5}}{\lambda(n)}+\xi_{6}\alpha(n)
 \right)
 E_{n}(f^{\psi}_{\beta})_{L_{1}},
 \end{equation}  
 де $-2\leq \xi_{5}\leq 2+ \frac{1}{\pi}$, $-8\leq \xi_{6}\leq \frac{4}{3}\left(2+ \frac{1}{\pi}\right)$.
\end{theorem}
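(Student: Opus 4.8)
The plan is to obtain Theorem~\ref{Theorem_Lebesgue_Inequality_M} directly from Theorem~\ref{theorem2}, by inserting into its two conclusions the sharp two‑term estimates for $\sum_{k=n}^{\infty}\psi(k)$ and $\frac1n\sum_{k=1}^{\infty}k\psi(k+n)$ that were already produced in the proof of Theorem~\ref{theorem2_M}. First I would check applicability of Theorem~\ref{theorem2}: since $\psi\in\mathfrak{M}$ and $\alpha(t)\downarrow0$, one has $\psi\in\mathfrak{M}^{\alpha}\subset\mathfrak{M}^{\infty}$, so $t^{r}\psi(t)\to0$ for every $r>0$ and, in particular, $\sum_{k=1}^{\infty}k\psi(k)<\infty$. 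Hence for each $f\in C^{\psi}_{\beta}L_{1}$ the inequality~\eqref{Theorem2Ineq1} holds, and there is a function $\mathcal{F}=\mathcal{F}(f;n,\cdot)\in C^{\psi}_{\beta}L_{1}$ with $E_{n}(\mathcal{F}^{\psi}_{\beta})_{L_{1}}=E_{n}(f^{\psi}_{\beta})_{L_{1}}$ for which the equality~\eqref{Theorem2Eq} holds.

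Next I would recall the estimates established inside the proof of Theorem~\ref{theorem2_M}. Under the present hypotheses ($\psi\in\mathfrak{M}$, $\alpha(t)\downarrow0$, $\lambda(t)\uparrow\infty$ and $\alpha(n)<\tfrac14$, which is covered by condition~\eqref{alpha_1/4}), formulas~\eqref{Sum_psi(k)} and~\eqref{form6} — which rest on Lemma~\ref{Lemma_Estimate_Integral_psi} taken at $a=n$ — yield
$$\sum_{k=n}^{\infty}\psi(k)=\psi(n)\lambda(n)\Big(1+\frac{\Theta_{4}}{\lambda(n)}+\Theta_{5}\alpha(n)\Big),\qquad 0\le\Theta_{4}\le1,\quad 0\le\Theta_{5}\le\tfrac43,$$
$$\frac1n\sum_{k=1}^{\infty}k\psi(k+n)=\psi(n)\lambda(n)\Big((\Theta_{7}-\Theta_{5})\alpha(n)+\frac{\Theta_{6}-\Theta_{4}}{\lambda(n)}\Big),\qquad 0\le\Theta_{6}\le1,\quad 0\le\Theta_{7}\le4.$$
So no further analytic work is needed; it remains to substitute these into Theorem~\ref{theorem2} and to collect terms.

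For the inequality~\eqref{Theorem_Lebesgue_Inequality_M_Ineq1} I would insert the first display into~\eqref{Theorem2Ineq1} (the second sum above is a series of nonnegative terms, so~\eqref{Theorem2Ineq1} is already the appropriate clean bound), getting
$$\|f(\cdot)-S_{n-1}(f;\cdot)\|_{C}\le\psi(n)\lambda(n)\Big(\frac1\pi+\frac{\Theta_{4}}{\pi\lambda(n)}+\frac{\Theta_{5}}{\pi}\,\alpha(n)\Big)E_{n}(f^{\psi}_{\beta})_{L_{1}},$$
which is~\eqref{Theorem_Lebesgue_Inequality_M_Ineq1}, with the ranges of $\xi_{3}$ and $\xi_{4}$ read off from $0\le\Theta_{4}\le1$ and $0\le\Theta_{5}\le\tfrac43$. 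For the sharpness equality~\eqref{Theorem_Lebesgue_Inequality_M_Eq} I would substitute both displays into~\eqref{Theorem2Eq} for the same $\mathcal{F}$ (where $-2\le\xi\le0$), obtaining
$$\|\mathcal{F}(\cdot)-S_{n-1}(\mathcal{F};\cdot)\|_{C}=\psi(n)\lambda(n)\Big(\frac1\pi+\frac{\Theta_{4}/\pi+\xi(\Theta_{6}-\Theta_{4})}{\lambda(n)}+\Big(\frac{\Theta_{5}}{\pi}+\xi(\Theta_{7}-\Theta_{5})\Big)\alpha(n)\Big)E_{n}(f^{\psi}_{\beta})_{L_{1}},$$
and then I would put $\xi_{5}:=\Theta_{4}/\pi+\xi(\Theta_{6}-\Theta_{4})$ and $\xi_{6}:=\Theta_{5}/\pi+\xi(\Theta_{7}-\Theta_{5})$. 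Optimizing over $\xi\in[-2,0]$, $\Theta_{4},\Theta_{6}\in[0,1]$, $\Theta_{5}\in[0,\tfrac43]$, $\Theta_{7}\in[0,4]$ — for instance $\xi_{5}$ is largest at $\xi=-2,\ \Theta_{4}=1,\ \Theta_{6}=0$ and $\xi_{6}$ smallest at $\xi=-2,\ \Theta_{5}=0,\ \Theta_{7}=4$ — gives exactly $-2\le\xi_{5}\le2+\tfrac1\pi$ and $-8\le\xi_{6}\le\tfrac43\big(2+\tfrac1\pi\big)$, as claimed.

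Since the two genuinely substantive ingredients — the Lebesgue‑type inequality and equality of Theorem~\ref{theorem2}, and the integral estimates of Lemma~\ref{Lemma_Estimate_Integral_psi} and Theorem~\ref{theorem2_M} — are already in hand, I expect the only point that needs care to be this final bookkeeping: propagating the four parameter ranges through the linear combinations so that the resulting intervals for $\xi_{3},\dots,\xi_{6}$ are precisely the stated ones, verifying that the endpoints are simultaneously attainable, and keeping straight which of the two auxiliary estimates feeds the $1/\lambda(n)$‑term and which feeds the $\alpha(n)$‑term.
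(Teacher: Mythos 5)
Your proposal is correct and coincides with the paper's own (one-sentence) proof: the inequality \eqref{Theorem_Lebesgue_Inequality_M_Ineq1} is obtained by substituting \eqref{Sum_psi(k)} into \eqref{Theorem2Ineq1}, and the equality \eqref{Theorem_Lebesgue_Inequality_M_Eq} by substituting \eqref{Sum_psi(k)} and \eqref{form6} into \eqref{Theorem2Eq}, with your optimization over $\xi\in[-2,0]$, $\Theta_{4},\Theta_{6}\in[0,1]$, $\Theta_{5}\in[0,\tfrac43]$, $\Theta_{7}\in[0,4]$ reproducing the stated ranges for $\xi_{5}$ and $\xi_{6}$ exactly. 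One remark: your substitution actually gives $\xi_{3}=\Theta_{4}/\pi\in[0,\tfrac{1}{\pi}]$ and $\xi_{4}=\Theta_{5}/\pi\in[0,\tfrac{4}{3\pi}]$, so the ranges printed in the theorem for $\xi_{3}$ and $\xi_{4}$ appear to be transposed relative to what the argument yields.
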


Нерівність \eqref{Theorem_Lebesgue_Inequality_M_Ineq1} є наслідком формул \eqref{Theorem2Ineq1} та \eqref{Sum_psi(k)}, а рівність \eqref{Theorem_Lebesgue_Inequality_M_Eq} випливає із  \eqref{Theorem2Eq},  \eqref{Sum_psi(k)} та \eqref{form6}. 

\vskip 10mm

\begin{enumerate}

\bibitem{Akhiezer}
Н.И. Ахиезер, {\it Лекции по теории аппроксимации},  Мир, Москва (1965).
%{\it N.I. Akhiezer}, Lectures on approximation theory. Mir, Moscow (1965).
%

\bibitem{Dzyadyk}
В.К. Дзядык, {\it Введение в теорию равномерного приближения функций полиномами}, Наука, Москва (1977).
%{\it  V.K. Dzyadyk}, Introduction to the theory of uniform approximation of functions by polynomials [in Russian], Nauka, Moscow (1977).
%

\bibitem{Fejer}
{L. Fejer,} {\it Lebesguesche konstanten und divergente Fourierreihen}, J. Reine Angew Math.  {\bf 138}, 22--53 (1910).

\bibitem{Galkin}
П.В. Галкин,  {\it Оценки для констант Лебега}, Тр. МИАН СССР,  {\bf 109},   3--5 (1971).
%{\it P.V. Galkin}, 
%Estimate for Lebesgue constants, Trudy MIAN SSSR. 109  (1971), 3--5. [Proc. Steklov Inst. Math. 109, 1-4 (1971).]
%Оценки для констант Лебега, Тр. МИАН СССР, (1971), Т.109. С. 3-5.

\bibitem{ZhukNatanson}
В.В. Жук, Г.И.Натансон,  {\it Тригонометрические ряды и элементы теории аппроксимации}, Изд-во Ленинг. ун-та (1983).
%{\it V.V. Zhuk and G.I. Natanson}, Trigonometrical Fourier series and elements of approximation theory (in Russian), Izdat. Leningr. Univ. (1983). 
%Тригонометрические ряды и элементы теории аппроксимации, Изд-во Ленинг. ун-та, 1983.

\bibitem{Kol}
 A. Kolmogoroff, {\it Zur Gr\"{o}ssennordnung des Restgliedes
Fourierschen Reihen differenzierbarer Funktionen}, (in German) Ann. Math.(2),
{\bf 36},  №2,  521--526 (1935).

\bibitem{Korn}
{Н.П. Корнейчук}, {\it Точные константы в
теории приближения},   Наука, Москва,
(1987).

%\cite{Kol}, \cite{Nikolsky 1946}--\cite{Serdyuk_Stepaniuk2015}, \cite{Stepanets1}

\bibitem{MusienkoSerdyuk2013_4} 
А.П. Мусієнко, А.С. Сердюк, {\it Нерівності типу Лебега для сум Валле Пуссена на множинах аналітичних функцій}, Укр. мат. журн., {\bf 65}, № 4, 522-537 (2013).

\bibitem{MusienkoSerdyuk2013_5} 
А.П. Мусієнко, А.С. Сердюк, {\it Нерівності типу Лебега для сум Валле Пуссена на множинах цілих функцій}, Укр. мат. журн.,  {\bf 65}, № 5, 642--653 (2013).

 \bibitem{Natanson}
 Г.И. Натансон, {\it Об оценке констант Лебега сумм Валле--Пуссена},  Геометрические вопросы теории функций и множеств, Калинин (1986).
%{\it G.I. Natanson},  An estimate for Lebesgue constants of de la Vallee-Poussin sums (in Russian), in "Geometric
%questions in the theory of functions and sets", Kalinin State Univ., Kalinin (1986).
 %Об оценке констант Лебега сумм Валле--Пуссена / Геометрические вопросы теории функций и множеств, Калинин, 1986.

\bibitem{Nikolsky 1946}
С. М. Никольский, {\it Приближение функций тригонометрическими полиномами в среднем},  Изв. АН СССР. Сер. матем.,  {\bf 10}, №3,  207--256 (1946).
%S.M. Nikol’skii,
%Approximation of functions in the mean by trigonometrical polynomials, (in Russian)
%Izv. Akad. Nauk SSSR, Ser. Mat. 10  (1946) 207-256.

\bibitem{Serdyuk2005}
А.С.  Сердюк, {\it Наближення класів аналітичних функцій сумами Фур'є в рівномірній метриці}, Укр. мат. журн.,  {\bf 57}, № 8. 1079--1096 (2005).
%A.S. Serdyuk, Approximation of classes of analytic functions by Fourier sums in uniform metric,  Ukr. Math. J. 57:8 (2005) 1275--1296.

\bibitem{Serdyuk2005Lp}
А.С. Сердюк, {\it  Наближення класів аналітичних функцій сумами Фур'є в метриці простору $L_p$},  Укр. мат. журн., {\bf 57}, № 10, 1395--1408 (2005).
%A.S. Serdyuk,  Approximation of classes of analytic functions by Fourier sums in the metric of the space $L_p$,
% Ukr. Math. J. 57:10 (2005)1395--1408.

\bibitem{SerdyukMusienko2010}
А.С. Сердюк, А.П. Мусієнко, {\it  Нерівності типу Лебега для сум Валле Пуссена при наближенні інтегралів Пуассона}, Збірник праць Інституту математики НАН України, {\bf 7}, № 1: Теорія наближення функцій та суміжні питання, Київ: Ін-т математики НАН України, 298--316 (2010).

\bibitem{SerdyukSokolenkoMFAT2019}
A.S. Serdyuk, I.V.  Sokolenko,{\it  Approximation by Fourier sums in classes of differentiable functions with high exponents of smoothness}, Methods of Functional Analysis and Topology, {\bf 25}, № 4,  381--387 (2019).

\bibitem{SerdyukSokolenkoUMJ2022}
А.С. Сердюк,    І. В. Соколенко, {\it Наближення сумами Фур’є на класах диференційовних у сенсі Вейля -- Надя функцій із високим показником гладкості},  Укр. мат. журн., {\bf 74}, № 5,   685 --700 (2022).

\bibitem{Serdyuk_Stepaniuk2014}
{А.С. Сердюк \/}, {\it  Оцінки найкращих наближень класів нескінченно диференційовних функцій
в рівномірній та інтегральній метриках },  Укр. мат. журн.,  {\bf 66}, №9,  1244--1256 (2014).

%\bibitem{Serdyuk_Stepaniuk2015}
%A.S. Serdyuk, T.A.Stepanyuk, Order estimates for the best approximations and approximations by Fourier sums of the classes
%of convolutions of periodic functions of low smoothness in the integral metric, Ukr. Math. J. 65:12  (2015) 1862--1882.

\bibitem{SerdyukStepanyuk2016}
А.С. Сердюк, Т.А. Степанюк, {\it   Рівномірні наближення сумами Фур’є на класах згорток з інтегралами Пуассона}, Допов. НАН України,  № 11, 10--16 (2016). 

\bibitem{SerdyukStepanyuk2017}
А.С. Сердюк, Т.А. Степанюк, {\it Наближення класів узагальнених інтегралів Пуассона сумами Фур’є в метриках просторів $L_{s}$},   Укр. мат. журн., {\bf 69}, № 5, 695-704 (2017). 

\bibitem{SerdyukStepanyuk2018}
A.S. Serdyuk, T.A. Stepanyuk, {\it Uniform approximations by Fourier sums on  classes of generalized Poisson integrals}, Analysis Mathematica, {\bf 45},  №1,  201--236 (2019).

\bibitem{SerdyukStepanyukFilomat} 
A.S. Serdyuk, T.A. Stepanyuk, {\it Asymptotically best possible Lebesque-type inequalities for the Fourier sums on sets of generalized Poisson integrals}, FILOMAT,  {\bf 34}, №14, 4697--4707  (2020).

\bibitem{SerdyukStepanyukJAEN}
A.S. Serdyuk, T.A. Stepanyuk, {\it About Lebesgue inequalities on the classes of generalized Poisson integrals}, Jaen J. Approx. {\bf 12}, 25--40 (2021).

\bibitem{Shakirov}
{ И.А. Шакиров}, {\it О двусторонней оценке нормы оператора Фурье},  Уфимск. матем. журн., {\bf 10}, №1, 96--117 (2018).
 %On two-sided estimate for norm of Fourier operator, Ufa Math. J., 10:1 (2018), 94--114.
%О двусторонней оценке нормы оператора Фурье, Уфимск. матем. журн. (2018) 10:1, 96--117.

%\bibitem{Stepanets1}
%A.I. Stepanets,
%Methods of Approximation Theory. VSP: Leiden, Boston  2005.

\bibitem{Stepanets1986_1}{ А.И. Степанец, \/}  {\it Классификация периодических функций и скорость сходимости их рядов Фурье},   Изв.  АН СССР. Сер. мат., {\bf 50}, №1, 101--136 (1986).

\bibitem{Step monog 1987} { А.И. Степанец, \/} {\it Классификация и
приближение периодических функций},
 Наукова Думка, Киев  (1987).

\bibitem{Stepanets1}
{ А.И. Степанец, \/} {\it Методы теории
приближений}: В 2 ч.,  Пр. Iн-ту математики НАН України, Ин-т
математики НАН Украины, Київ, {\bf 40}, Ч. І 
(2002).

\bibitem{Stepanets2}
{ А.И.Степанец.\/} {\it Методы теории
приближений}: В 2 ч.,  Пр. Iн-ту математики НАН України, Ин-т
математики НАН Украины, Київ, {\bf 40}, Ч. І 
(2002).

\bibitem {Stepanets1989N4} 
{ A.I. Stepanets}, 
{\it On the Lebesgue inequality on classes of  $(\psi,\beta)$-differentiable functions}, Ukr. Math. J.,  {\bf 41}, №4, 435--443 (1989).

\bibitem{StepanetsSerdyuk2000} 
А.И. Степанец, А.C. Сердюк, {\it Неравенства Лебега для интегралов Пуассона}, Укр. мат. журн., {\bf 52}, № 6, 798-808 (2000).

\bibitem{StepanetsSerdyuk2000No3} 
А.И. Степанец, А.С. Сердюк {\it Приближение суммами Фурье и наилучшие приближения на классах аналитических функций},  Укр. мат. журн., {\bf 52}, №3, .375--395 (2000). 

\bibitem{Stepanets_Serdyuk_Shydlich2007}
О.І. Степанець, А.С. Сердюк, А.Л. Шидліч {\it Про деякі нові критерії нескінченної диференційовності періодичних функцій}, Укр. мат. журн., {\bf 59}, №10, 1399--1409 (2007)
 
\bibitem{Stepanets_Serdyuk_Shydlich}
{А.И. Степанец, А.С. Сердюк, А.Л. Шидлич,\/}
{\it Классификация бесконечно дифференцируемых функций } 
Укр. мат. журн., {\bf  60}, №12,
1686--1708 (2008).

\bibitem{Stechkin 1980}
С. Б. Стечкин {\it Оценка остатка ряда Фурье для дифференцируемых функций.  Приближение функций полиномами и сплайнами}, Сборник статей, Тр. МИАН СССР, {\bf 145}, 126–151 (1980). %Proc. Steklov Inst. Math., 145 (1981), 139–166
%S.B. Stechkin,
%An estimate of the remainder term of Fourier series for differentiable functions, (in Russian)
%Tr. Mat. Inst. Steklova 145  (1980) 126--151. 

\bibitem{Telyakovskiy1961}
{  С.А. Теляковский, \/} {\it О нормах тригонометрических полиномов и
приближении дифференцируемых функций линейными средними их рядов
Фурье. I},  Тр. Мат. ин-та АН СССР, {\bf  62}, 61--97 (1961).

\bibitem{Teljakovsky1968}
 С.А. Теляковский, {\it  Приближение дифференцируемых функций частными суммами их рядов Фурье},  Матем. заметки, {\bf 4}, № 3, 291--300 (1968).

\bibitem{Teljakovsky1989}
 С.А. Теляковский, {\it  О приближении суммами Фурье функций высокой
гладкости}, Укр. мат. журн., {\bf 41},
№ 4,  510--518 (1989).

%\bibitem {Stepanets1989N4} A.I. Stepanets, 
%On the Lebesgue inequality on classes of  $(\psi,\beta)$-differentiable functions, Ukr. Math. J. 41:4 435--443 (1989).

%\bibitem {StepanetsSerdyuk} A.I. Stepanets, A.S. Serdyuk, Lebesgue inequalities for Poisson integrals, Ukr. Math. J. 52:6, 798-808 (2000).

%\bibitem {SerdyukMusienko} A.S. Serdyuk, A.P. Musienko, The Lebesgue type inequalities for the de la Vall?e Poussin sums in approximation of Poisson integrals,
%Zb. Pr. Inst. Mat. NAN Ukr. 7:1, 298-316 (2010).

\end{enumerate}

\end{document}